 \newtheorem{theorem}{Theorem}[section]
\newtheorem{proposition}[theorem]{Proposition}
\newtheorem{lemma}[theorem]{Lemma}
\newtheorem{corollary}[theorem]{Corollary}
\newtheorem{conjecture}[theorem]{Conjecture}
\theoremstyle{definition}
\newtheorem{remark}[theorem]{Remark}
\newtheorem{example}[theorem]{Example}
\newcommand{\PP}{\mathbb{P}}
\newcommand{\RR}{\mathbb{R}}
\newcommand{\CC}{\mathbb{C}}
\newcommand{\ZZ}{\mathbb{Z}}
\newcommand{\NN}{\mathbb{N}}
\title{\bf Varieties of Lines in 3-Space}
\author{Benjamin Hollering, Elia Mazzucchelli, \\
Matteo Parisi, and Bernd Sturmfels}
\date{}
\begin{document}

\maketitle

\begin{abstract} \noindent
We consider configurations of lines in 3-space with incidences prescribed by a graph. This defines a subvariety in a product of Grassmannians.
Leveraging a connection with rigidity theory in the plane, for any graph, we determine the dimension of the incidence variety and characterize when it is irreducible or a complete intersection. We study its multidegree
and the family of Schubert problems
it encodes.
Our
spanning-tree coordinates enable efficient symbolic computations.
We also provide numerical irreducible decompositions for incidence varieties with up to eight lines. These constructions 
with lines play a key role in the Landau analysis of scattering amplitudes in particle physics.
\end{abstract}

\section{Introduction}
\label{sec:Introduction}

Lines in complex projective $3$-space $\PP^3$ are given by points
$A = (a_{12}:a_{13}: a_{14} : a_{23} : a_{24} : a_{34})$
in the {\em Grassmannian} ${\rm Gr}(2,4)$, which is the
hypersurface in $\PP^5$ defined by the Pl\"ucker quadric
\begin{equation}
\label{eq:PluckerQuadric} a_{12}  a_{34} - a_{13} a_{24} + a_{14} a_{23} \,\, = \,\, 0 \, .  \end{equation}
The {\em Pl\"ucker coordinates} $a_{ij}$ are the $2 \times 2$ minors of any
$2 \times 4$ matrix ${\bf A}$ whose rows span the line.
If $B$ is a second line,
also given by a $ 2 \times 4$ matrix ${\bf B}$,
then we consider the bilinear form
\begin{equation}
\label{eq:ABquadric}
 AB \,\, := \,\, {\rm det} \begin{small} \begin{pmatrix} {\bf A} \\ {\bf B} \end{pmatrix} \end{small}
 \,\, = \,\,a_{12} b_{34} - a_{13} b_{24} + a_{14} b_{23} + a_{23} b_{14}
- a_{24} b_{13} + a_{34} b_{12} \, .  \end{equation}
This expression is zero if and only if lines $A$ and $B$ intersect in $\PP^3$.
Note that (\ref{eq:PluckerQuadric}) is $ AA  = 0$.

A configuration of $\ell$ lines in $\PP^3$ is a point 
 in the product  ${\rm Gr}(2,4)^\ell$. In this
 paper we study subvarieties of ${\rm Gr}(2,4)^\ell$
 that are defined by the equations (\ref{eq:ABquadric})
 for prescribed pairs of~lines.

\begin{example}[$\ell=3$]
\label{ex:three}
The $12$-dimensional variety ${\rm Gr}(2,4)^3 \subset (\PP^5)^3$ encodes triples of lines. 
We work in the polynomial ring $\CC[A,B,C]$
in the $18$ variables $a_{ij},b_{ij},c_{ij}$
modulo the Pl\"ucker ideal  $\langle AA, BB, CC \rangle$.
In this quotient ring, the ideal $\langle AB, BC \rangle$ is prime.
Its variety  parametrizes $3$-chains of lines in $\PP^3$. On the other hand,
the ideal $\langle AB, AC, BC \rangle$ is radical but not prime.
It is the intersection of two prime ideals $I_{[3]}$ and $I_{[3]}^*$.
Here, $I_{[3]}$ represents triples of concurrent lines,
while $I_{[3]}^*$ represents triples of coplanar lines.
Each of these two associated primes has ten additional generators of degree three.
These ten cubics are written in \cite[Theorem 3.1]{PST} as $3 \times 3$ determinants, where the entries are linear forms in $A,B,C$.
\end{example}

To model the general case, we abbreviate $[\ell] = \{1,2,\ldots,\ell\}$, and we
fix the polynomial ring $\CC[A_1,A_2,\ldots,A_\ell]$ in $6\ell$ Pl\"ucker coordinates.
We work in its quotient
modulo the prime ideal $\langle A_i A_i : i \in [\ell] \rangle$ that defines
${\rm Gr}(2,4)^\ell$. For any graph $G \subseteq \binom{[\ell]}{2}$, 
we consider the ideal
\begin{equation}
\label{eq:idealI_G}
I_G \quad = \quad \bigl\langle\, A_i A_j \,: \,ij \in G \,\bigr\rangle \, . 
\end{equation}
This defines the {\em incidence variety}  $V_G \subseteq {\rm Gr}(2,4)^\ell$.
This ideal $I_G$ need not be radical; see (\ref{eq:IK4}).
Throughout this paper, $G$ is a graph which is undirected, simple and connected, identified with its set of edges.
If $G$ has two distinct connected components $G_1$ and $G_2$, then $V_G = V_{G_1} \times V_{G_2}$.  
Among the points  in $V_G$, we are          also interested in
those that satisfy $A_i A_j \not= 0$ for all non-edges $ij$ of $G$.
 The Zariski closure of this set is the {\em realization} $W_G$ of $G$.
While $V_G$ has dimension at least $2\ell+3$, because it contains
$V_{K_\ell}$, the realization $W_G$ may be empty. 

\smallskip

We now discuss the organization and main results of this paper.
In Section \ref{sec2} we study the ideals $I_G$
 for graphs $G$  with $\ell \leq 5$ vertices.
We then introduce affine coordinates that identify  ${\rm Gr}(2,4)^\ell$
with $\CC^{4\ell}$. Proposition \ref{prop:WeFind} shows
that passing to $\CC^{4 \ell}$ retains the primary decomposition of $I_G$.
Section \ref{sec3} is devoted to the multidegree $[V_G]$.
This is the cohomology class of $V_G$ in its Pl\"ucker
embedding in $(\PP^5)^\ell$. The multidegree encodes
 a range of Schubert problems 
(Theorem \ref{thm:multidegree})
that are labeled by auxiliary graphs $G_u$.
Corollary \ref{cor:CI} gives a formula for
complete intersections, and Corollary \ref{cor:escobar}
revisits the connection to computer vision in~\cite{EK, PST}.
In Section \ref{sec5} we extend the framework, due to
Elekes-Sharir \cite{ES} and Raz \cite{Raz} in 
computational geometry, which relates incidences of
lines in $\PP^3$ to rigidity theory in the plane~\cite{MS, SS}.
A key role is played by the rigidity matroid and
the Geiringer-Laman Theorem.
The codimension of our variety $V_G$
is at most the rank of $G$ in the rigidity matroid
(Corollary \ref{cor:VG_CI_then_G_ind}). Theorems \ref{thm:wheel} and \ref{thm:triangulations}
present detailed analyses for wheel graphs resp.~polygon triangulations.
These are circuits resp.~bases in the rigidity matroid.

Section \ref{sec7} is the heart of this paper. 
Theorem \ref{thm:codim} gives a formula for the codimension
of the incidence variety $V_G$ of an arbitrary graph $G$.
We characterize graphs $G$ for which $V_G$ 
is a complete intersection (Theorem \ref{thm:CI}) or irreducible (Theorem \ref{thm:irred}). 
This rests on rigidity theory and a new graph-theoretic notation called
\emph{contraction stability}. 
This notion is characterized for $\ell \leq 9$ by the exclusions of 
 certain induced  subgraphs (such as $K_{2,4}$).
 We also show (in Theorem \ref{thm:isprime}) that the ideal $I_G$ is prime
 whenever its variety $V_G$ is irreducible.
 
In Section \ref{sec4} we introduce an affine coordinate
system that is adapted to a choice of spanning tree in the graph $G$.
For instance, for $\ell = 3$,  we now use only eight variables, given by 
two $2 \times 2$ matrices $X_1$ and $X_2$.
The equations for $V_{K_3}$ in spanning tree coordinates are
$${\rm det}(X_1) \,  \, = \,\, {\rm det}(X_2) \,\, = \,\, {\rm det}(X_1 + X_2) \quad = \quad 0 \, .$$
The effectiveness for symbolic computations is demonstrated
in Examples \ref{ex:compgra2}, \ref{ex:K24} and \ref{ex:trianghex}.

Section \ref{sec6} is our contribution to experimental mathematics.
We undertake a numerical study of the varieties $V_G$ 
for graphs $G$ up to $\ell = 8$. Numerical irreducible
decompositions are computed using {\tt HomotopyContinuation.jl} \cite{BT}.
The results are shown in Tables
\ref{table:all-connected-graphs} and~\ref{table:triangle-free-graphs}.
See Proposition   \ref{prop:winner7} for a graph $G$ with $\ell=7$
where $V_G$ has $58$ irreducible components.
Our census for small graphs is made available on {\tt Zenodo} at the supplementary materials website
\begin{equation}
\label{eq:zenodo}
   \hbox{\url{https://zenodo.org/records/17708048}.}
\end{equation}

The original motivation for this project comes from particle physics.
The varieties $V_G$ and $V_{G_u}$ are essential in the
Landau analysis of  scattering amplitudes that are expressed in
Grassmannian coordinates. This is the topic of
our forthcoming companion paper \cite{HMPS},
which will be aimed at a physics audience. Section \ref{sec9} offers a friendly invitation 
for mathematicians.

\section{First Examples}
\label{sec2}

We work in a polynomial ring $\CC[A_1,\ldots,A_\ell]$ in $6\ell $ Pl\"ucker variables.
This is the homogeneous coordinate ring of the ambient space $(\PP^5)^\ell$.
It contains the ideal $I_G = \langle A_i A_j : ij \in G \rangle$ for any graph $G \subseteq \binom{[\ell]}{2}$.
The incidence variety $V_G$ is the subvariety of 
${\rm Gr}(2,4)^\ell \subset (\PP^5)^\ell$  defined by $I_G$.
The realization $W_G$ is a subvariety of $V_G$.
In this section we compute these ideals and varieties for some special graphs. 
Our aim is to exhibit various phenomena which can occur.

\begin{example}[Complete graphs] \label{ex:complete}
Let $G = K_\ell$ be the complete graph. The variety
$V_G$ represents $\ell$-tuples of pairwise
intersecting lines in $\PP^3$. Note that $V_G$ is the union of two irreducible
varieties $V_{[\ell]}$ and $V_{[\ell]}^*$. These represent
concurrent lines and coplanar lines respectively.
As in Example \ref{ex:three}, we write $I_{[\ell]}$
and $I^*_{[\ell]}$ for the  prime ideals of $V_{[\ell]}$ and $V_{[\ell]}^*$.
The concurrent lines ideal  $I_{[\ell]}$ 
was determined in \cite[Section 3]{PST}.
The ideal $I^*_{[\ell]}$ is obtained from $I_{[\ell]}$
by a linear change of coordinates. Namely, for each line $A$,
we apply the Hodge star duality:
\begin{equation}
\label{eq:hodgestar} \bigl(a_{12}: a_{13}: a_{14}: a_{23}: a_{24}: a_{34} \bigr)
\quad \mapsto \quad \bigl(a_{34}: -a_{24}: a_{23}: a_{14} :- a_{13}: a_{12} \bigr) \, . 
\end{equation}

For $\ell \geq 4$, the ideal $I_{G}$ is not a radical ideal,
but it has $I_{[\ell]} + I^*_{[\ell]}$ as an embedded associated prime.
Specifically, for $\ell=4$, a computation reveals the minimal primary decomposition:
\begin{equation}
\label{eq:IK4}
 I_{K_4} \,\, = \,\, I_{[4]} \,\,\cap\, \, I^*_{[4]}\, \,\cap\, \, \bigl(\,I_{K_4} + (I_{[4]})^2 + (I^*_{[4]})^2 \,\bigr) \, . 
 \end{equation}
The prime ideals $I_{[\ell]}$ and $I^*_{[\ell]}$ and their varieties serve as basic building blocks in this paper.
\end{example}

In addition to $K_4$, there are five connected graphs on $\ell=4$ vertices.
Three of these graphs $G$ contain no triangle. Their ideals
$I_G$ are prime and complete intersections. In particular,
$V_G = W_G$ is irreducible.
If $G$ is a triangle with a pendant edge then
$I_G$ is the intersection of two prime ideals. This case is similar to
$I_{K_3} = I_{[3]} \cap I_{[3]}^*$. This leaves one graph to examine.

\begin{figure}[h]
	\centering
\begin{tikzpicture}[scale = .8, every node/.style={font=\small}]

    \def\colsep{6} 
    
    \def\dotradius{1.5pt} 
    \def\labelgap{0.25} 
    
    \draw (0,0) rectangle (2,2);
    \fill[gray!30] (0,2) -- (2,2) -- (0,0) -- cycle;
    \foreach \x/\y in {0/0,0/2,2/0,2/2} {
        \fill[black] (\x,\y) circle (\dotradius);
    }
    
    \node at (-\labelgap,2+\labelgap) {2};
    \node at (2+\labelgap,2+\labelgap) {3};
    \node at (2+\labelgap,-\labelgap) {4};
    \node at (-\labelgap,-\labelgap) {1};
    \draw (0,0) -- (2,2);
    
    \node at (1,-1.0) {\(I_{123} + I^*_{134}\)};

    \begin{scope}[shift = {(0, -4)}]
    \draw[black, thick] (0, 0) -- (1.5,2);
    \draw[black, thick] (2, 0) -- (.5,2);
    \draw[black, thick] (0,.5) -- (2, .5);
    \draw[black, thick] (0, 1) -- (2, 1.65);
    
    \node at (1.5, 2.2) {3};
    \node at (.5, 2.2) {1};
    \node at (2.2, .5) {4};
    \node at (2.2, 1.65) {2};
    
    \end{scope}

    \draw (\colsep,0) rectangle (\colsep+2,2);
    \fill[gray!30] (\colsep + 0,0) -- (\colsep+2,0) -- (\colsep+2,2) -- cycle;
    \foreach \x/\y in {0/0,0/2,2/0,2/2} {
        \fill[black] (\colsep+\x,\y) circle (\dotradius);
    }
    
    \node at (\colsep-\labelgap,2+\labelgap) {2};
    \node at (\colsep+2+\labelgap,2+\labelgap) {3};
    \node at (\colsep+2+\labelgap,-\labelgap) {4};
    \node at (\colsep-\labelgap,-\labelgap) {1};
    \draw (\colsep,0) -- (\colsep+2,2);
    
    \node at (\colsep+1,-1.0) {\(I^*_{123} + I_{134}\)};
    
    \begin{scope}[shift = {(\colsep, -4)}]
    \draw[black, thick] (0, 0) -- (1.5,2);
    \draw[black, thick] (2, 0) -- (.5,2);
    \draw[black, thick] (0,.5) -- (2, .5);
    \draw[black, thick] (0, 1) -- (2, 1.65);
    
    \node at (1.5, 2.2) {1};
    \node at (.5, 2.2) {3};
    \node at (2.2, .5) {2};
    \node at (2.2, 1.65) {4};
    
    \end{scope}

    \draw (2*\colsep,0) rectangle (2*\colsep+2,2);
    \fill[gray!30] (2*\colsep,0) rectangle (2*\colsep+2,2);
    \foreach \x/\y in {0/0,0/2,2/0,2/2} {
        \fill[black] (2*\colsep+\x,\y) circle (\dotradius);
    }
    \node at (2*\colsep-\labelgap,2+\labelgap) {2};
    \node at (2*\colsep+2+\labelgap,2+\labelgap) {3};
    \node at (2*\colsep+2+\labelgap,-\labelgap) {4};
    \node at (2*\colsep-\labelgap,-\labelgap) {1};
    \draw (2*\colsep,0) -- (2*\colsep+2,2);
    
    \node at (2*\colsep+1,-1.0) {\(I_{1234}\)};
    
    \begin{scope}[shift = {(2*\colsep, -4)}]
    \draw[black, thick] (0, 0) -- (2,2);
    \draw[black, thick] (2, 0) -- (0,2);
    \draw[black, thick] (0, 1) -- (2, 1);
    \draw[black, thick] (1, 2) -- (1, 0);
    
    \node at (0, 2.2) {1};
    \node at (1, 2.2) {2};
    \node at (2.1, 2.1) {3};
    \node at (2.2, 1.0) {4};
    
    \end{scope}

    \draw (3*\colsep,0) rectangle (3*\colsep+2,2);
    \foreach \x/\y in {0/0,0/2,2/0,2/2} {
        \fill[black] (3*\colsep+\x,\y) circle (\dotradius);
    }
    \node at (3*\colsep-\labelgap,2+\labelgap) {2};
    \node at (3*\colsep+2+\labelgap,2+\labelgap) {3};
    \node at (3*\colsep+2+\labelgap,-\labelgap) {4};
    \node at (3*\colsep-\labelgap,-\labelgap) {1};
    \draw (3*\colsep,0) -- (3*\colsep+2,2);
    
    \node at (3*\colsep+1,-1.0) {\(I^*_{1234}\)};
    
    \begin{scope}[shift = {(3*\colsep, -4)}]
    \draw[black, thick] (0, 2) -- (2,0);
    \draw[black, thick] (1, 2) -- (0,0);
    \draw[black, thick] (0, 0.3) -- (2, 1.5);
    \draw[black, thick] (0,1) -- (2, .2);
    
    \node at (1, 2.2) {2};
    \node at (0, 2.2) {1};
    \node at (2.2, 1.65) {3};
    \node at (2.2, .3) {4};

    \end{scope}

\end{tikzpicture}
\caption{The four irreducible components in the incidence variety
for the graph $G$ in Example \ref{ex:whiteandblack}.
The last row shows the configurations of four lines given by each component.}
\label{figure:fourlines}
\end{figure}
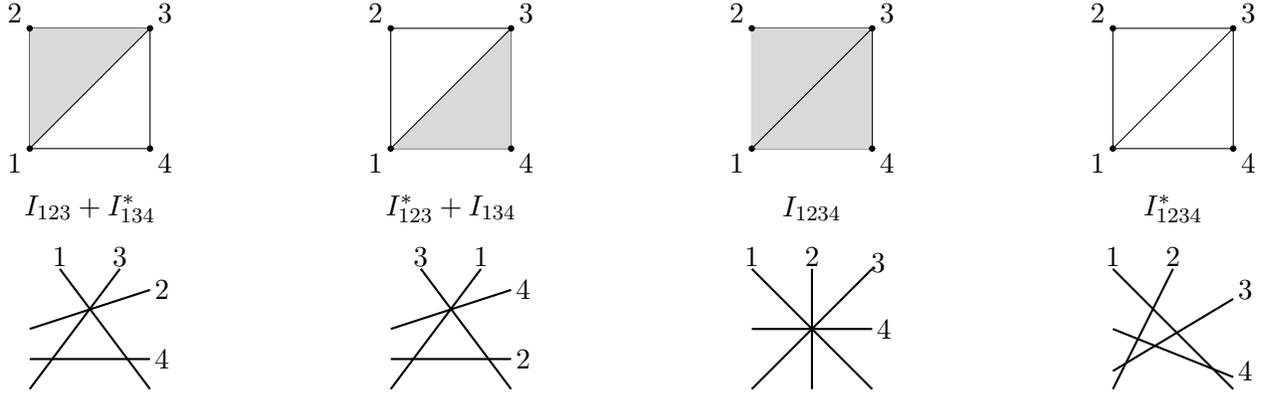

\begin{example}[$\ell = 4$] \label{ex:whiteandblack}
Let $G$ be the complete graph $K_{4}$ with the one edge $\{2,4\}$ removed.
Thus $G$ is the triangulation of a quadrilateral.
The ideal $I_G$ is a radical complete intersection.
Its variety $V_G$ has four irreducible components.
They are illustrated in Figure \ref{figure:fourlines}.
Each of the two triangles is colored black (for concurrent) 
or white (for coplanar). When both triangles have the same color,
we obtain the varieties  $V_{[4]}$ or $V_{[4]}^*$. When the colors differ,
we obtain $V_{123} \cap V_{134}^*$ or $V_{123}^* \cap V_{134}$. The union of the last two 
components is the realization $W_G$.
\end{example}

We now turn to $\ell = 5$, so we work in a polynomial ring with $30$ variables.
There are $21$ connected graphs on five vertices up to isomorphism.
Precisely six of these  are triangle-free: three trees, the $5$-cycle, the $4$-cycle
with a pendant edge, and the bipartite graph $K_{2,3}$. For each of these six graphs,
$I_G$ is a prime and a complete intersection, and we have $V_G = W_G$. 

The $15$ remaining graphs $G$  contain triangles. Their varieties $V_G  \subset
{\rm Gr}(2,4)^5$ are reducible because triangles
always yield decompositions (by Proposition \ref{prop:bases_red}).
For $10$ of these $15$ graphs, the ideal $I_G$ is a complete
intersection, so we have ${\rm codim}(V_G) = |G|$ and
${\rm degree}(I_G) = 2^{|G|}$.
Here degree refers to the total degree  as computed by {\tt Macaulay2} \cite{M2}.
See also Corollary \ref{cor:subscheme}.

\begin{example}[$\ell=5, |G| = 7$] \label{ex:5a}
We discuss two interesting complete intersection graphs.
The first is the triangulated pentagon $G = \{12,23,34,45,15,\,13,14\} $.
The ideal $I_G$ is radical, and its variety $V_G$ has $8$ irreducible components.
They are obtained by labeling the three triangles white and black,
as in Example \ref{ex:whiteandblack}. See Theorem \ref{thm:triangulations}
for triangulated polygons in general.

The second graph we examine is  $G = \{14, 15, \,24, 25, \,34, 35, \,45\}$.
This is the bipartite graph $K_{2,3}$ with one extra edge $45$.
Now, the complete intersection ideal $I_G$ is not radical. It has $9$ associated primes, all of codimension $7$.
The primary decomposition equals
\begin{equation}
\label{eq:5a2}
\begin{matrix}
I_G & = & Q \,\,\, \cap \,\,\,
I_{12345} \,\,\cap \,\, I_{12345}^* \,\,\,\cap\,\,\,\,
(I_{1245} + I_{345}^*) \, \cap \,( I_{1245}^* + I_{345}) \,\,\,\, \cap \,\, \\ & &  
(I_{1345} + I_{245}^*) \, \cap \,( I_{1345}^* + I_{245}) \,\, \cap \,\,
(I_{2345} + I_{145}^*) \, \cap \,( I_{2345}^* + I_{145}) \, .
\end{matrix}
\end{equation}
The last eight components are as before. Their degrees add up to
$2 \cdot 8\,+\,6 \cdot 16 = 112$.
The ideal $Q$ is primary and it defines the variety $W_G$. 
The realizations in $W_G$ are given by taking the same line
for $4$ and $5$ and arbitrary lines $1,2,3$ that intersect the double line $45$.
This example is remarkable in that the main component has
a non-reduced double structure.
\end{example}

We now turn to the five graphs for which the ideal $I_G$ is not
a complete intersection (CI).

\begin{example}[$\ell=5$, not CI] \label{ex:5b}
Four of the five graphs $G$ contain $K_4$ as an induced subgraph.
The corresponding ideals $I_G$ are not radical. We list them by increasing number of edges:

$|G|=7$: \  $K_4$ with a pendant edge;
$V_G = W_G$ has $2$ components of codim $6$ and degree~$12$.

$|G|=8$: \   $K_4$ with a pendant triangle;
$V_G$ has $4$ components of codim $7$; degrees $8,8,16,16$.

$|G|=9$: \ $K_5$ minus one edge;
$V_G$ has $4$ components; $2$ of codim $7$, from $K_5$ below.

\hfill The realization $W_G$ has 
$2$ components of codim $8$ and degree $18$.

$|G|=10$:    $K_5$ (Example \ref{ex:complete}); 
$V_G = V_{12345} \cup V_{12345}^*$; components have codim~$7$
and degree~$8$.

\smallskip

\noindent
The last remaining graph is the wheel graph
$G = W_5 = \{13,14,23,24,\,15,25,35,45\}$.
Its ideal $I_G$ is radical, and $V_G$ has $8$ components.
Here, the prime decomposition equals
\begin{equation}
\label{eq:5a3}
\begin{matrix}
I_G & = & 
I_{12345} \,\,\cap \,\, I_{12345}^* \,\,\,\cap\,\,\,\,
(I_{135} + I_{145}^* + I_{235}^* + I_{245})  \,\cap
(I_{135} + I_{145}^* + I_{235}^* + I_{245})  \\ & & 
(I_{1235} + I_{1245}^*) \,\cap \, (I_{1235}^* + I_{1245}) \,\, \cap \,\,
(I_{1345} + I_{2345}^*) \,\cap \, (I_{1345}^* + I_{2345}) \, .
\end{matrix}
\end{equation}
We see that $W_G$ has
 eight irreducible components. 
   Theorem~\ref{thm:wheel} offers a general explanation.
 \end{example}

Our primary decompositions 
live in a polynomial ring
$\CC[A,B,C,\ldots]$ in $6\ell$ variables.
Computing them is  challenging.
However, the geometric nature of our problem allows us
to use affine coordinates.
We write our lines as the row spans of the following $2 \times 4$ matrices:
\begin{equation}
\label{eq:ABCDE} {\bf A} \, = \, \begin{pmatrix}
1 & 0 & \alpha_1 & \alpha_2 \\
0 & 1 & \alpha_3 & \alpha_4 \end{pmatrix} , \,\,
 {\bf B} \, = \, \begin{pmatrix}
1 & 0 & \beta_1 & \beta_2 \\
0 & 1 & \beta_3 & \beta_4 \end{pmatrix}, \,\,
{\bf C} \, = \, \begin{pmatrix}
1 & 0 & \gamma_1 & \gamma_2 \\
0 & 1 & \gamma_3 & \gamma_4 \end{pmatrix}, \,
\ldots.
\end{equation}
The Pl\"ucker coordinates $a_{ij}, b_{ij},c_{ij} ,\ldots$ are the
$2 \times 2$ minors of these matrices.
The dehomogenization in (\ref{eq:ABCDE}) gives a map from 
$\CC[A,B,C,\ldots]$ onto the polynomial ring
$\CC[\alpha,\beta,\gamma, \ldots]$ in $4 \ell$ variables.
Given any polynomial or ideal in $\CC[A,B,C,\ldots]$,
we denote its image under dehomogenization
with a tilde.  The incidence condition for 
the lines $A$ and $B$
is now given~by 
\begin{equation}
\label{eq:AdotB}
\widetilde{AB} \,\,  =\,\, {\rm det} \begin{small}
\begin{pmatrix} {\bf A} \\ {\bf B} \end{pmatrix} \end{small} \,\, = \,\, 
{\rm det} \begin{pmatrix}
\alpha_1 - \beta_1  & \alpha_2 - \beta_2 \\
\alpha_3 - \beta_3  & \alpha_4 - \beta_4 
\end{pmatrix}.
\end{equation}

\begin{example}[$\ell=3$] \label{ex:12affine}
Three lines have $12$ affine coordinates. The ideal for $K_3$ now equals
$$ \begin{matrix} \widetilde I_{K_3} \,\, = \,\,
\langle \widetilde{AB},\widetilde{AC} ,\widetilde{BC} \rangle & = & \quad
\biggl\langle \hbox{$2 \times 2$-minors of} \ \begin{small}
\begin{pmatrix}
\alpha_1 - \beta_1  & \alpha_2 - \beta_2 & \alpha_1 - \gamma_1  & \alpha_2 - \gamma_2 \\
\alpha_3 - \beta_3  & \alpha_4 - \beta_4 & \alpha_3 - \gamma_3  & \alpha_4 - \gamma_4   
\end{pmatrix}\end{small} \biggr\rangle \smallskip \\ & &  \!  \cap \,\,\,
\biggl\langle \hbox{$2 \times 2$-minors of} \ \begin{small}
\begin{pmatrix}
\alpha_1 - \beta_1  & \alpha_3 - \beta_3 & \alpha_1 - \gamma_1  & \alpha_3 - \gamma_3 \\
\alpha_2 - \beta_2  & \alpha_4 - \beta_4 & \alpha_2 - \gamma_2  & \alpha_4 - \gamma_4   
\end{pmatrix} \end{small}
 \biggr\rangle \, .
\end{matrix}
$$
The two components are prime ideals, given by
  $2 \times 4$ matrices with  independent entries.
  The analogous computation for $\ell = 4$ yields
  two $2 \times 6$ matrices, but  with an embedded prime.
\end{example}

\begin{proposition} \label{prop:WeFind}
The primary decomposition of $I_G$ is obtained
by homogenizing that of $\widetilde I_G$.
\end{proposition}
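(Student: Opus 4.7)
The plan is to recognize the dehomogenization $I_G \rightsquigarrow \widetilde I_G$ as the restriction of $V_G$ to the principal open subset
$$U \,=\, \bigl\{\, (A_1,\ldots,A_\ell) \in {\rm Gr}(2,4)^\ell \,:\, a_{12}^{(i)} \neq 0 \text{ for all } i \,\bigr\},$$
which the parameterization (\ref{eq:ABCDE}) identifies with $\CC^{4\ell}$, and then to invoke the standard fact that primary decompositions are preserved under such an open restriction provided no associated prime is contained in the removed locus $\bigcup_i \{a_{12}^{(i)} = 0\}$.

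First I would set $R := \CC[A_1,\ldots,A_\ell]/\langle A_iA_i : i \in [\ell]\rangle$, multigraded by line index, and check that the substitution (\ref{eq:ABCDE}) agrees with the composition of localizing $R$ at $f := \prod_{i=1}^\ell a_{12}^{(i)}$, passing to the degree-$(0,\ldots,0)$ part, and identifying the result with $\CC[\alpha]$. Under this identification the image of $I_G$ is exactly $\widetilde I_G$. From elementary commutative algebra, if $I_G = \bigcap_{j=1}^r Q_j$ is a minimal multigraded primary decomposition with $P_j := \sqrt{Q_j}$, then
$$\widetilde I_G \,=\, \bigcap_{\,j \,:\, f \notin P_j\,} \widetilde Q_j,$$
and for each surviving component the homogenization of $\widetilde Q_j$ returns $Q_j$. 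It therefore suffices to show that every associated prime $P_j$ of $I_G$ contains none of the coordinates $a_{12}^{(i)}$.

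For this, the main input is the $GL(4)$-invariance of $I_G$. Each generator $A_iA_j$ in (\ref{eq:ABquadric}) is the determinant of a $4 \times 4$ matrix on which $g \in GL(4)$ acts by right multiplication, and therefore is multiplied by $\det(g)$; consequently the ideal $I_G$ is stable under the diagonal $GL(4)$-action on ${\rm Gr}(2,4)^\ell$. Since $GL(4)$ is connected, its induced action on the finite set of associated primes of $I_G$ must be trivial, so each $P_j$ is itself $GL(4)$-invariant.

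The final step is a transitivity argument: for any non-empty $GL(4)$-stable subvariety $V(P_j) \subseteq {\rm Gr}(2,4)^\ell$, the closure of its image under the $i$-th projection is a $GL(4)$-stable closed subset of ${\rm Gr}(2,4)$. Because $GL(4)$ acts transitively on ${\rm Gr}(2,4)$, this closure must equal all of ${\rm Gr}(2,4)$, so $V(P_j)$ is not contained in the Schubert divisor $\{a_{12}^{(i)} = 0\}$; equivalently, $a_{12}^{(i)} \notin P_j$. The hardest part will be setting up the multigraded localization bijection between primary ideals of $R$ avoiding every $a_{12}^{(i)}$ and primary ideals of $\CC[\alpha]$ cleanly, but since the chart (\ref{eq:ABCDE}) makes the Pl\"ucker relations vacuous on $U$, this should be a standard, if notationally delicate, check.
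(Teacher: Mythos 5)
Your proposal is correct and takes essentially the same route as the paper: both reduce the statement to showing that no coordinate $a_{12}^{(i)}$ lies in an associated prime of $I_G$, using $GL(4)$-invariance of $I_G$, connectedness of $GL(4)$, and transitivity of the action, and then invoke the standard dehomogenization/homogenization correspondence on the chart where all $a_{12}^{(i)} \neq 0$. The only cosmetic differences are that you deduce invariance of the associated primes directly from connectedness (the paper instead cites an equivariant primary decomposition result) and you use transitivity of $GL(4)$ on ${\rm Gr}(2,4)$ via the $i$-th projection, where the paper phrases this as transitivity on Schubert divisors.
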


\begin{proof}[Proof and Discussion]
In our setting, homogenization is the following process.
For each line $A$, we write the equations that relate 
Pl\"ucker coordinates and affine coordinates as follows:
\begin{equation}
\label{eq:aAlines}
a_{23}+\alpha_1 a_{12},\,
a_{24}+\alpha_2 a_{12},\,
a_{13} - \alpha_3 a_{12},\,
a_{14} - \alpha_4 a_{12},\,\,
a_{34} - \alpha_1 \alpha_4 a_{12} + \alpha_2 \alpha_3 a_{12} \, .
\end{equation}
To homogenize an ideal in $\CC[\alpha,\beta,\gamma,\ldots]$, we add these
$5 \ell$ polynomials to it,
we next saturate by the degree $\ell$ Pl\"ucker monomial $a_{12} b_{12} c_{12} \cdots $,
and we finally eliminate the $4 \ell$ unknowns $\alpha,\beta,\gamma,\ldots$.
The result is a homogeneous ideal in the Pl\"ucker coordinate ring $\CC[A,B,C,\ldots]$.

Conversely, let $J$ be any $\ZZ^\ell$-homogeneous ideal  in $\CC[A,B,C,\ldots]$
that contains the ideal of ${\rm Gr}(2,4)^\ell$. Suppose that
none of the variables $a_{12},b_{12},c_{12},\ldots$ is a zero divisor
modulo~$J$. The dehomogenization $\widetilde J$ is an ideal in
$\CC[\alpha,\beta,\gamma,\ldots]$. If we now homogenize
$\widetilde J$ via (\ref{eq:aAlines}) then we recover the original ideal $J$.
This holds because  $a_{12} b_{12} c_{12} \cdots $
is not a zero divisor modulo~$J$. Morever, $J$ is prime 
(resp.~primary) if and only if $\widetilde J$ is prime (resp.~primary).

The general linear group ${\rm GL}(4)$ acts on the product of Grassmannians ${\rm Gr}(2,4)^\ell$.
The condition for two lines to be incident is invariant under
this action. Hence, for any graph $G \subseteq \binom{[\ell]}{2}$,
the ideal $I_G$ is invariant under ${\rm GL}(4)$.
Since  ${\rm GL}(4)$ is connected,
$I_G$ has an equivariant primary decomposition, by \cite{PT}.
This means that each primary ideal is  ${\rm GL}(4)$ invariant,
and so is the irreducible variety it defines.
No ${\rm GL}(4)$ invariant subvariety  of ${\rm Gr}(2,4)^\ell$
can be contained in  a Schubert divisor, like $\{a_{12} = 0\}$,
because ${\rm GL}(4)$ acts transitively on the Schubert divisors of ${\rm Gr}(2,4)$.
Therefore, no Pl\"ucker coordinate is a zero-divisor modulo $ I_G$.

We can thus dehomogenize $I_G$, compute a primary decomposition of the affine ideal
$\widetilde I_G$, and then homogenize each primary component of $\widetilde I_G$.
This process yields a primary decomposition of $I_G$.
Both primary decompositions can be arranged to be ${\rm GL}(4)$ equivariant.

It is instructive to write the action of ${\rm GL}(2,4)$ in affine coordinates,
given by the $2 \times 4$-matrices  in (\ref{eq:ABCDE}), like
 ${\bf A} = \bigl({\rm Id}_2\,\, \alpha \bigr)$.
 Let $U $ be an invertible $4 \times 4$ matrix with
$2 \times 2$ blocks $u_{11},u_{12},u_{21},u_{22}$. 
The right action of $U$ on ${\bf A}$ is given by the linear-fractional transformation
 \begin{equation}
 \label{eq:LFT}    \alpha  \,\, \mapsto \,\,  (u_{11} + \alpha \,u_{21})^{-1} 
 \cdot (u_{12} + \alpha \,u_{22} ) \, . 
 \end{equation}
 This is the non-commutative matrix product.
 Consider a second line,  given by its $2 \times 4$ matrix 
${\bf B} = \bigl(\,{\rm Id}_2\,\, \beta \bigr)$. Then (\ref{eq:LFT}) leaves the
 determinant of  $\alpha - \beta$ invariant, up to a constant.
\end{proof}

Proposition~\ref{prop:WeFind} ensures that all our
computations can be carried out in affine coordinates.
This makes symbolic computations much faster, and it
also greatly facilities numerical computations.
In Section \ref{sec4} we introduce a modification of the
affine coordinates which is even more advantageous.
For our theoretical discussions, however, we continue to use
Pl\"ucker coordinates 
$A = (a_{12}:a_{13}: a_{14} : a_{23} : a_{24} : a_{34})$
on each of the $\ell$ Grassmannians ${\rm Gr}(2,4)$.
This is especially important in the next section,
where the $\ZZ^\ell$-grading plays a key role.

\section{Multidegrees}
\label{sec3}

The homogeneous coordinate ring of $(\PP^5)^\ell$ is 
 the polynomial ring $\CC[A_1,A_2,\ldots,A_\ell]$, where
 $A_i$ is the vector of Pl\"ucker coordinates for the line $i$.
 This ring has a natural $\ZZ^\ell$-grading. Each of the
 six entries of $A_i$ has degree $e_i$,
 which is the $i$th standard basis vector of $\ZZ^\ell$.
All our ideals describe subschemes of ${\rm Gr}(2,4)^\ell$
and they are homogeneous in this $\ZZ^\ell$-grading.

Let $G$ be a graph with $\ell$ vertices as before, and let $c = {\rm codim}(V_G)$.
The incidence variety $V_G$ is a subvariety of dimension $d := 4\ell - c$ in ${\rm Gr}(2,4)^\ell$.
Fix $u \in \NN^\ell$ with $u_1+\cdots+ u_\ell = d$. We denote by
$G_u$ the graph that is obtained from $G$ by attaching $u_i$ pendant edges
at the vertex $i$, for all $i \in [\ell]$. The augmented graph $G_u$ has $\ell+d$
vertices and $|G|+d$ edges.

We consider the map $\,\psi_u: V_{G_u} \rightarrow {\rm Gr}(2,4)^d\,$
that deletes the $\ell$ original lines.
The fiber of $\psi_u$ over a general point in ${\rm Gr}(2,4)^d$ is given by
$d$ constraints on a $d$-dimensional variety. Each constraint
 imposes a Schubert condition on one of the $\ell $ lines 
represented by $G$. We expect the fiber to consist of finitely many points. 
We refer to these points as the {\em leading singularities} of the graph $G_u$.
This name is a reference to 
 Landau analysis for scattering amplitudes  in particle physics (see~Section \ref{sec9}).
 We define the {\em LS degree} of $G_u$ to be the cardinality of the
 generic fiber of $\psi_u$ if this cardinality is finite. We set it to $0$~otherwise.
In this definition, the cardinality of the fiber is understood with multiplicity; see Remark \ref{rmk:slightabuse}.

 \begin{example}[$\ell=1$] \label{ex:onevertex}
 Let $G$ be the graph with one vertex and no edge.
 Then $c=0$~and $u_1 = d = 4$. The graph $G_u = \{12,13,14,15\}$
is  the star tree with four leaves. The variety $V_{G_u}$
is irreducible of dimension $16$.
Each point is a configuration of four lines, labeled $2,3,4,5$,
that intersect a given line, labeled $1$.  The map $\psi_u$
deletes the line $1$. Its image is  ${\rm Gr}(2,4)^4$.
Each fiber of $\psi_u$ consists of the two lines that are incident
to four given lines in~$\PP^3$.
We conclude that the LS degree of $G_u$ is $2$,
expressing the
most basic fact in Schubert calculus.
\end{example}
 
\begin{example}[$\ell=3$]  \label{ex:K3again}
Let $G$ be the complete graph $K_3$.
Then $c=3$ and $d=9$. The vector $u = (u_1,u_2,u_3)$
satisfies $u_1+u_2+u_3 = 9$. In order for the LS degree
to be positive, each vertex of $G$ must have
at least four neighbors in $G_u$. This implies $ u_1,u_2,u_3 \geq 2$.
Hence, up to permuting vertices, there are only two possibilities, namely
$u =(4,3,2)$ and $u=(3,3,3)$.

Each LS degree is twice the LS degree of the
irreducible component $V_{123}$ or $V_{123}^*$.
We therefore restrict to the concurrent lines variety $V_{123}$,
The LS degree of $G_u$ is twice the number of triples of
concurrent lines where  $u_i$ Schubert conditions are imposed on line $i$.

We now show that half the LS degree of the graph $G_{432}$ is $4 = 2 \times 2$.
Line $1$ is one of the two lines satisfying its
$u_1 = 4$ Schubert conditions.
 Next, line~$2$ is one of the two lines incident to
the line $1$ and $u_2 = 3$ other given lines.
The line
$3$ is now uniquely determined, as it contains the
intersection point of lines $1$ and $2$,
and it meets the other $u_3=3$ lines.

Finally, half the LS degree of $G_{333}$ is $8 = 2 \times 2 \times 2$.
This was derived in \cite[Section IV.C]{BVH}.
The argument goes as follows.
For $i \in \{1,2,3\}$, the $u_i=3$ Schubert conditions 
specify that the line $i$ lies in one of the two rulings of 
 a quadratic surface $Q_i$ in $\PP^3$.
The intersection $Q_1 \cap Q_2 \cap Q_3$ 
consists of eight points, by B\'ezout's Theorem.
Let $p$ be one of these eight points. A unique line $L_i$
from the specific ruling of $Q_i$ passes through $p$.
Hence $(L_1,L_2,L_3)$ is a point in the fiber of $\psi_u$.
Conversely, each point in the fiber is one of these eight points.
 \end{example}

We now fix the graph $G$ but we let the vector $u $ range over $\NN^d$.
We shall organize
the LS degrees $\gamma_u$ of the various graphs $G_u$
into a generating function  in $\ell$ formal variables
$t_1,t_2,\ldots,t_\ell$. 
The cohomology ring of the ambient space $(\PP^5)^\ell$ is the truncated polynomial~ring
$$ H^* \bigl( (\PP^5)^\ell , \,\ZZ \bigr) \,\, = \,\,
\ZZ[t_1,t_2,\ldots,t_\ell] / \langle t_1^6, t_2^6, \ldots, t_\ell^6 \rangle \, . $$

The cohomology class of $V_G$ is a homogeneous polynomial of degree $c+\ell$ of the form
$$ [ V_G ] \,\,\, = \,\,\,
 t_1 t_2\, \cdots\, t_\ell \cdot 
 \sum_{u \in \NN^\ell} \gamma_u \,t_1^{4-u_1} t_2^{4-u_2}\, \cdots\,\,  t_\ell^{4-u_\ell}  \,\,\,= \,\,\,
  \sum_{u \in \NN^\ell} \gamma_u \,t_1^{5-u_1} t_2^{5-u_2}\, \cdots\,\,  t_\ell^{5-u_\ell} \, .
 $$
The prefactor $ t_1 t_2\, \cdots\, t_\ell$ is the class of ${\rm Gr}(2,4)^\ell$ in $(\PP^5)^\ell$.
 Each coefficient $\gamma_u$ is a non-negative integer which is zero unless $u_1 + \cdots + u_\ell = d$
 and $u_i \leq 4$ for all $i$. As is customary in commutative algebra \cite{CCA},
  we call $[V_G]$ the {\em multidegree} of  the incidence variety $V_G$.  It can be
 computed in   {\tt Macaulay2} \cite{M2} by applying the command {\tt multidegree}  to the ideal $I_G$.
The following proposition states that the multidegree is precisely the desired generating function.

 \begin{theorem} \label{thm:multidegree}
  The LS degree of $\,G_u$ equals the coefficient $\gamma_u$ in the multidegree of $\,V_G$.
 \end{theorem}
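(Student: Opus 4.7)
The plan is to interpret $\gamma_u$ as an explicit intersection number on $(\PP^5)^\ell$ and to geometrically identify that intersection with the generic fiber of $\psi_u$.

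Unpacking the multidegree expansion in the truncated cohomology ring $\ZZ[t_1,\ldots,t_\ell]/\langle t_1^6,\ldots,t_\ell^6 \rangle$, the coefficient $\gamma_u$ equals the coefficient of the top class $t_1^5 \cdots t_\ell^5$ in the product $[V_G]\cdot t_1^{u_1} \cdots t_\ell^{u_\ell}$. Since $t_i^{u_i}$ is the cohomology class of a general linear subspace $L_i \subseteq \PP^5$ of codimension $u_i$, standard intersection theory on $(\PP^5)^\ell$ gives
$$
\gamma_u \;=\; \deg\bigl(\,V_G \,\cap\, (L_1 \times L_2 \times \cdots \times L_\ell)\,\bigr),
$$
understood as the length of the scheme-theoretic intersection for generic choices of $L_1,\ldots,L_\ell$. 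The dimensions match because $\sum_i u_i = d = \dim V_G$ inside $(\PP^5)^\ell$.

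Each $L_i$ is then written as a transverse intersection of $u_i$ generic hyperplanes of $\PP^5$. Because $V_G \subseteq {\rm Gr}(2,4)^\ell$, only the restrictions of these hyperplanes to ${\rm Gr}(2,4)$ matter. Under the Pl\"ucker embedding a general hyperplane section of ${\rm Gr}(2,4)$ is the Schubert divisor of lines in $\PP^3$ meeting a fixed line. Hence imposing $u_i$ hyperplane cuts in the $i$-th factor is equivalent to requiring that the line $A_i$ be incident to $u_i$ prescribed generic lines---precisely the effect of attaching $u_i$ pendant edges at vertex $i$ and pinning the newly added lines. Performed simultaneously for every $i\in[\ell]$, this realizes the incidence pattern of the augmented graph $G_u$ on $\ell+d$ lines, and identifies the intersection scheme $V_G \cap \prod_i L_i$ with the scheme-theoretic fiber of $\psi_u : V_{G_u} \to {\rm Gr}(2,4)^d$ over the point of ${\rm Gr}(2,4)^d$ recording these $d$ pendant lines.

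Passing to the generic fiber, this length is the LS degree of $G_u$ by definition (cf.\ Remark \ref{rmk:slightabuse}), which establishes the desired equality. If the generic fiber happens to be positive-dimensional, then $\gamma_u$ vanishes by excess dimension, and the LS degree vanishes by the convention set after Example \ref{ex:onevertex}, so the equality persists. The main technical obstacle is transversality: as shown in (\ref{eq:IK4}) and Example \ref{ex:5b}, $V_G$ can be non-reduced and reducible, so one must check that the generic Schubert translates slice cleanly through each top-dimensional component and that lower-dimensional or embedded primary components contribute nothing. Kleiman's transversality theorem, applied factorwise to the transitive action of the connected group ${\rm PGL}(4)$ on ${\rm Gr}(2,4)$ (and compatible with the ${\rm GL}(4)$-equivariance of $I_G$ recorded in the proof of Proposition \ref{prop:WeFind}), supplies exactly what is needed: for generic pendant lines the intersection is proper along the smooth locus of every top-dimensional component, so its scheme-theoretic length matches the cohomological intersection number $\gamma_u$, while embedded or lower-dimensional parts of $V_G$ contribute zero to both sides.
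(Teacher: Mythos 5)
Your proposal is correct and takes essentially the same route as the paper: identify $\gamma_u$ with the intersection number of $V_G$ against a product of codimension-$u_i$ linear sections, realize those sections by Schubert-hyperplane conditions (i.e.\ generic pendant lines), and identify the resulting intersection with the generic fiber of $\psi_u$, counted with multiplicity as in Remark~\ref{rmk:slightabuse}. The only quibble is your literal claim that a \emph{general} hyperplane section of ${\rm Gr}(2,4)$ is a Schubert divisor---it is only rationally equivalent to one, since general sections are smooth quadric threefolds while Schubert divisors are tangent-hyperplane sections---but the Kleiman-transversality argument you give for generic pendant lines is exactly what bridges this, and it makes explicit the transversality and non-reducedness points that the paper's terser proof leaves implicit.
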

 
 \begin{proof}
 The coefficient of $\prod_{i=1}^\ell t_i^{5-u_i}$ in the cohomology class $[V_G]$ is the  number
of points obtained by intersecting the variety $V_G$ with a product
 $L_1 \times L_2 \times \cdots \times L_\ell$, where $L_i$ is a general linear space
of dimension  $5- u_i$ in $\PP^5$. We obtain such a general subspace $L_i$ by intersecting
$u_i$ general Schubert hyperplanes in $\PP^5$. Hence the number of intersection
points is precisely the cardinality of the fiber of the map $\psi_u: G_u \rightarrow {\rm Gr}(2,4)^d$
discussed above.
\end{proof}

\begin{corollary} \label{cor:CI}
If $\,V_G$ is a complete intersection, meaning $c = |G|$, then the multidegree~is
\begin{equation}
\label{eq:mutlidegCI}  [V_G] \,\, = \,\, 2^\ell \cdot \prod_{i=1}^\ell t_i \cdot
\prod_{ij \in G }(t_i + t_j) \, . 
\end{equation}
\end{corollary}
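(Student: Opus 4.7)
The plan is to view $V_G$ as a complete intersection inside the full ambient product $(\PP^5)^\ell$, not merely inside ${\rm Gr}(2,4)^\ell$, and then apply multiplicativity of cohomology classes for complete intersections.

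First I would note that ${\rm Gr}(2,4)^\ell$ is cut out in $(\PP^5)^\ell$ by the $\ell$ Pl\"ucker quadrics $A_iA_i$, one per factor. These form a regular sequence simply because they involve disjoint sets of variables. Adjoining the $|G|$ incidence relations $A_iA_j$ for $ij \in G$ gives the ideal $I_G + \langle A_iA_i : i \in [\ell]\rangle$, presented by $\ell + |G|$ generators. Under the complete-intersection hypothesis $c = |G|$, the resulting subscheme has codimension $\ell + c = \ell + |G|$ in $(\PP^5)^\ell$. By Krull's height theorem applied in the Cohen--Macaulay polynomial ring $\CC[A_1,\ldots,A_\ell]$, all $\ell + |G|$ generators then form a regular sequence, so the full ideal is a complete intersection. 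In particular the resulting scheme is Cohen--Macaulay and unmixed, so its fundamental class in $H^*((\PP^5)^\ell,\ZZ)$ coincides with $[V_G]$ with no contribution from embedded or lower-dimensional components.

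Next I would identify the class of each defining hypersurface. The Pl\"ucker quadric $A_iA_i$ is $\ZZ^\ell$-homogeneous of multidegree $2e_i$, so the divisor it defines has class $2t_i$. The bilinear form $A_iA_j$ in (\ref{eq:ABquadric}) has multidegree $e_i + e_j$, so the corresponding divisor has class $t_i + t_j$. Multiplicativity for a complete intersection then yields
\[
[V_G] \,\,=\,\, \prod_{i=1}^\ell (2t_i) \,\cdot \prod_{ij \in G}(t_i + t_j) \,\,=\,\, 2^\ell \prod_{i=1}^\ell t_i \,\cdot \prod_{ij\in G}(t_i + t_j),
\]
as claimed.

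The only nontrivial step is the codimension count that upgrades $V_G$ from a complete intersection inside the Grassmannian product to a complete intersection inside $(\PP^5)^\ell$; everything after that is standard multidegree arithmetic. As a sanity check one may take $G$ to be the edgeless graph on $\ell$ vertices, where the formula specializes to $[{\rm Gr}(2,4)^\ell] = 2^\ell t_1\cdots t_\ell$, consistent with the computation $\gamma_{(4,\ldots,4)} = 2^\ell$ coming from Theorem \ref{thm:multidegree} applied to the disjoint union of $\ell$ star trees with four leaves each (cf.\ Example \ref{ex:onevertex}).
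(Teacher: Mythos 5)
Your proof is correct and follows essentially the same strategy as the paper's: identify the $\ZZ^\ell$-degrees $2e_i$ and $e_i + e_j$ of the Pl\"ucker quadrics $A_iA_i$ and incidence forms $A_iA_j$, observe that under the hypothesis $c=|G|$ they cut out a complete intersection in $(\PP^5)^\ell$, and multiply the corresponding divisor classes in $H^*((\PP^5)^\ell,\ZZ)$. Your invocation of Krull's height theorem together with the Cohen--Macaulay property to turn the codimension count into a regular sequence is a welcome bit of extra rigor where the paper simply asserts the divisors ``intersect transversally,'' a phrasing that is slightly too strong when $I_G$ is not radical (cf.\ Remark~\ref{rmk:slightabuse}) but that still yields the same multidegree identity.
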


\begin{proof}
We refer to the $\ZZ^\ell$-grading on $\CC[A_1,A_2,\ldots,A_\ell]$.
The Pl\"ucker quadric $A_iA_i$ for the $i$th factor of ${\rm Gr}(2,4)^\ell$ has degree $2 e_i$. The bilinear
form $A_i A_j$ has degree $e_i + e_j$ for each $ij \in G$. The $\ell+ |G|$ divisors  they define
intersect transversally. Hence we obtain the class $[V_G]$ by multiplying
their divisor classes $2 t_i$ resp.~$t_i + t_j$ in the cohomology ring
$H^* \bigl( (\PP^5)^\ell , \,\ZZ \bigr)$.
\end{proof}

\begin{example}[$\ell=3$] \label{ex:K3d}
Consider the complete graph $K_3$.
The multidegree formula (\ref{eq:mutlidegCI}) gives
$$ [V_{K_3}] \,= \,  [V_{123}] + [V_{123}^*] \,=\,
\, 8 \cdot t_1 t_2 t_3 (t_1 + t_2)
(t_1 + t_3) (t_2 + t_3) \,\,=\,\,
8  \cdot \biggl(\,\sum_{\pi \in S_3} t_{\pi(1)}^3 t_{\pi(2)}^2 t_{\pi(3)} \biggr)
\,+\, 16 \cdot t_1^2 t_2^2 t_3^2 \, . $$
The coefficients $8$ and $16$ are the LS degrees found in Example \ref{ex:K3again}.
We here compute them using Theorem \ref{thm:multidegree} and Corollary \ref{cor:CI}.
For addivity of the multidegree see \cite[Theorem 8.53]{CCA}.
\end{example}

\begin{remark} \label{rmk:slightabuse}
Our symbol $[V_G]$ is a slight abuse of notation.
Namely, by $V_G$ we here mean the subscheme of $(\PP^5)^\ell$
defined by the ideal $I_G$. Algebraically speaking,
$[V_G]$ is the multidegree of the $\ZZ^\ell$-graded ideal $I_G$. 
The point is that the incidence scheme $V_G$ need not be reduced.
This becomes relevant for the second graph $G$ in Example~\ref{ex:5a}.
Here the multidegree equals
$$ [V_G] \,\,\,=\,\,\, 2^7 \cdot \prod_{i=1}^5  t_i  \cdot \biggl( \,
\prod_{j=1}^3 \,(t_j + t_4)(t_j + t_5) \biggr) \cdot
(t_4 + t_5) \, . $$
This polynomial is the sum of the seven multidegrees for the
irreducible components in (\ref{eq:5a2}).
The summand $[Q]$ is twice the multidegree of
the reduced variety given by the radical of $Q$.
\end{remark}

In Section~\ref{sec2} we reported the
degree for various subschemes in ${\rm Gr}(2,4)^\ell$. What we meant there is
 the total degree of the  ideal in $\CC[A_1,\ldots,A_\ell]$ divided by $2^{\ell}$.
 The factor $2^{\ell}$  is the total degree of ${\rm Gr}(2,4)^\ell$ in $(\PP^5)^\ell$.
In particular,
the degree of $V_G$ equals the total degree  of the ideal $\widetilde I_G$ 
in the affine coordinates given
in Proposition \ref{prop:WeFind}.
We summarize this as follows.

\begin{corollary} \label{cor:subscheme}
The total degree of a subscheme $V$ of ${\rm Gr}(2,4)^\ell$
equals the class $1/2^\ell \cdot [V]$ evaluated at
$t_1 = \cdots = t_\ell = 1$. For $V_G$, this is $1/2^\ell$ times the
sum of all LS degrees $\gamma_u$.
\end{corollary}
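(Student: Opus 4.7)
The plan is to reduce the corollary to two textbook facts about multidegrees and then read off the specialization for $V_G$. First, for any $\ZZ^\ell$-multihomogeneous ideal $J$ in the polynomial ring $\CC[A_1, \ldots, A_\ell]$, the multidegree polynomial $[J] \in \ZZ[t_1, \ldots, t_\ell]$ satisfies the evaluation identity $[J](1,\ldots,1) = \deg(J)$, where the right-hand side is the standard $\ZZ$-graded degree computed by Macaulay2's {\tt degree} command (all $6\ell$ Pl\"ucker variables assigned degree $1$). This is a general principle for multigraded modules proved in \cite[Chapter 8]{CCA} by specializing the multigraded $K$-polynomial to a single variable $t$ and reading off the leading coefficient of the resulting Hilbert polynomial. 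I would invoke it as a black box.

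Second, each factor ${\rm Gr}(2,4) \subset \PP^5$ is cut out by the single Pl\"ucker quadric \eqref{eq:PluckerQuadric}, so it has degree $2$ in $\PP^5$, and by multiplicativity of degree in products of projective spaces, ${\rm Gr}(2,4)^\ell$ has standard degree $\prod_{i=1}^\ell 2 = 2^\ell$ in $(\PP^5)^\ell$. The normalization convention laid out in the paragraph preceding the corollary is precisely that the total degree of a subscheme $V \subseteq {\rm Gr}(2,4)^\ell$ means $\deg(\text{its ideal})/2^\ell$. Combining with the first fact yields the first assertion of the corollary, $\deg(V) = [V](1, \ldots, 1)/2^\ell$.

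For $V_G$ the second assertion is then immediate from Theorem \ref{thm:multidegree}: substituting $t_1 = \cdots = t_\ell = 1$ into
\[ [V_G] \,=\, \sum_u \gamma_u\, t_1^{5-u_1} t_2^{5-u_2}\cdots t_\ell^{5-u_\ell} \]
collapses every monomial to $1$, so $[V_G](1, \ldots, 1) = \sum_u \gamma_u$, and dividing by $2^\ell$ finishes the proof. No real obstacle arises, since everything is an appeal to standard multigraded commutative algebra. The only subtlety worth a sanity check is that the correspondence between Macaulay2's {\tt degree} and the evaluation $[V](1, \ldots, 1)$ continues to hold when $I_G$ is non-reduced, as flagged in Remark \ref{rmk:slightabuse}; this is automatic because throughout we take $[V_G]$ in the ideal-theoretic (not set-theoretic) sense, so embedded and non-reduced structure contribute to both sides identically.
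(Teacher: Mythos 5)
Your proof is correct and takes essentially the same route the paper takes implicitly: the corollary is stated as a summary of the preceding paragraph, which defines the total degree of a subscheme $V\subseteq{\rm Gr}(2,4)^\ell$ as the $\ZZ$-graded degree of its ideal divided by $2^\ell$, and the identification of that $\ZZ$-graded degree with $[V](1,\ldots,1)$ is the standard specialization fact from multigraded commutative algebra that you correctly invoke from \cite[Chapter 8]{CCA}. The application of Theorem~\ref{thm:multidegree} to get $\sum_u\gamma_u$ and the sanity check on the non-reduced case via Remark~\ref{rmk:slightabuse} are both handled correctly.
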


A formula for the multidegree of the concurrent lines variety
$V_{[\ell]}$ was stated in \cite[eqn (11)]{PST} and proved in \cite{EK}.
We next show that this result follows easily from Theorem \ref{thm:multidegree}.

\begin{corollary}[Escobar-Knutson] \label{cor:escobar}
The multidegree of the concurrent lines variety equals
$$ [V_{[\ell]}] \, = \,  t_1^3 t_2^3 \cdots t_\ell^3 \,\cdot \,
\biggl( 4 \cdot \sum_{(i,j)} t_i^{-2} t_j^{-1} \, + \,
8 \cdot \! \sum_{\{i,j,k\}} t_i^{-1} t_j^{-1} t_k^{-1} \biggr) \, . $$
\end{corollary}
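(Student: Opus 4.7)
The plan is to apply the geometric interpretation underlying Theorem~\ref{thm:multidegree} directly to the irreducible variety $V_{[\ell]}$, of dimension $2\ell+3$. Writing $[V_{[\ell]}]=\sum_u \gamma_u \prod_{i=1}^\ell t_i^{5-u_i}$, the coefficient $\gamma_u$ equals the number of concurrent $\ell$-tuples $(L_1,\ldots,L_\ell)$ of lines in $\PP^3$ such that $L_i$ meets $u_i$ general lines, counted with multiplicity. It suffices to show that $\gamma_u=0$ unless $u$ equals $(4,3,2,\ldots,2)$ or $(3,3,3,2,\ldots,2)$ up to permutation, and to evaluate $\gamma_u$ to $4$ and $8$ in these two cases.

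First I would pin down the support. Parameterize a generic point of $V_{[\ell]}$ by the concurrency point $p\in\PP^3$ together with $\ell$ directions at $p$, so that $V_{[\ell]}$ is birational to a $(\PP^2)^\ell$-bundle over $\PP^3$. For generic $p$ and a generic line $L\subset\PP^3$, the lines through $p$ meeting $L$ form the pencil in the plane $\langle p,L\rangle$, cutting out a hyperplane in the fiber $\PP^2_p$. Hence imposing $u_i$ generic Schubert conditions on $L_i$ absorbs $\min(u_i,2)$ dimensions of that fiber and transfers $\max(u_i-2,0)$ conditions onto $p$. For the intersection number to be finite, the surpluses must sum to $\dim\PP^3=3$, and combined with $\sum u_i=2\ell+3$ and the Grassmannian bound $u_i\leq 4$, this forces $u$ (up to permutation) to be either $(4,3,2,\ldots,2)$ or $(3,3,3,2,\ldots,2)$.

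For $u=(4,3,2,\ldots,2)$ with $u_i=4,\,u_j=3$, classical Schubert calculus gives exactly two lines $L_i$ meeting the four prescribed lines. The point $p$ lies on $L_i$, and $L_j$ passes through $p$ and meets three further lines iff $p$ lies on the quadric $Q_j$ whose relevant ruling consists of the transversals to those three lines. Intersecting $L_i$ with $Q_j$ gives $\deg Q_j=2$ choices for $p$, so $(p,L_i)$ has $2\cdot 2=4$ realizations; for each the remaining lines are uniquely determined since a line through $p$ meeting two given lines is unique. For $u=(3,3,3,2,\ldots,2)$ on vertices $i,j,k$ the argument from Example~\ref{ex:K3again} applies verbatim: $p\in Q_i\cap Q_j\cap Q_k$ yields $2^3=8$ points by B\'ezout, each extending to a unique configuration. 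Summing $4$ over ordered pairs $(i,j)$ and $8$ over unordered triples $\{i,j,k\}$ and factoring out the base monomial $t_1^3\cdots t_\ell^3$ yields the stated formula.

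The main technical obstacle is confirming that these geometric counts represent scheme-theoretic intersection multiplicities, not just set-theoretic solutions. This reduces to verifying transversality: the intersection of the reduced irreducible variety $V_{[\ell]}$ with a generic product of Schubert cycles is zero-dimensional and reduced. This follows from Kleiman's transversality theorem applied to the transitive ${\rm GL}(4)$-action on each factor ${\rm Gr}(2,4)$, which is also how the equivariance in the proof of Proposition~\ref{prop:WeFind} was exploited. An alternative self-contained route is to realize $V_{[\ell]}$ as the birational image of the resolution $\{(p,L_1,\ldots,L_\ell)\in\PP^3\times{\rm Gr}(2,4)^\ell : p\in L_i \text{ for all } i\}$ and compute $[V_{[\ell]}]$ as a pushforward, bypassing the transversality step.
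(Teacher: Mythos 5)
Your proposal is correct and follows essentially the same route as the paper: both deduce the coefficients from Theorem~\ref{thm:multidegree}, observe that only the patterns $(4,3,2,\ldots,2)$ and $(3,3,3,2,\ldots,2)$ can contribute, and count $4=2\times 2$ and $8=2^3$ solutions exactly as in Example~\ref{ex:K3again}. Your additional details (the $(\PP^2)^\ell$-bundle over $\PP^3$ used for the support analysis, and Kleiman transversality to justify that the counts are scheme-theoretic multiplicities) merely make explicit what the paper's shorter argument leaves implicit.
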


\begin{proof}
For $\ell = 3$, twice the right hand side is equal to $[V_{K_3}]$ in Example \ref{ex:K3d}.
The coefficients $4$ and $8$ in front of the sums are derived by the argument 
 in Example \ref{ex:K3again}.
Consider one of the $\ell(\ell-1)$ monomials 
$\prod_{i=1}^\ell t_i^{5-u_i}$ in the left sum.
The line $i$ is one of two lines incident to
$u_i=4$ given lines.
 Next, the  line~$j$ is one of two lines incident to
the line $i$ and  $u_j=3$ given lines. All other lines
are uniquely determined.
 In total, we had $\,\gamma_u = 4 = 2 \times 2$ choices.

Next consider one of the $\binom{\ell}{3}$ monomials in the right sum.
We have $3$ Schubert constraints on  lines $i,j,k$
and $2$ constraints on each of the other $\ell-3$ lines. The
argument with B\'ezout's Theorem in Example~\ref{ex:K3d} shows that there we have $8$ choices for the
triple  $i,j,k$. After this, the other $\ell-3$ lines are 
uniquely determined. Hence the coefficient of the monomial is $8$.
No other monomials can appear in $[V_{[\ell]}]$ because
each line needs at least $4$ neighbors in $G_u$.
\end{proof}

\begin{remark}
The multidegree sees only the top-dimensional components of $V_G$.
The same holds for the command {\tt degree} in {\tt Macaulay2} \cite{M2}.
This is relevant for graphs with many lower-dimensional components,
like that in Proposition \ref{prop:winner7}. To make the LS degrees of all components visible,
we must increase $d$ before defining the multidegree and the graphs $G_u$.
\end{remark}

The decompositions seen in (\ref{eq:5a2}) and (\ref{eq:5a3})
are mapped to themselves under the involution given
by the Hodge star in (\ref{eq:hodgestar}).
We close the section with the result that this always holds.

\begin{proposition} \label{prop:hodge} The Hogde star operation acts on the ideals
$I_G$ and their primary decompositions.
Pairs of primary components that are dual to each other have the same multidegree.
\end{proposition}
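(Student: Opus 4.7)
The plan is to verify that the simultaneous Hodge star $\sigma$, obtained by applying \eqref{eq:hodgestar} to each of the $\ell$ lines, is a $\ZZ^\ell$-graded ring automorphism of $\CC[A_1,\ldots,A_\ell]$ that fixes every Pl\"ucker quadric $A_iA_i$ and every bilinear form $A_iA_j$. These two properties will yield both claims: an automorphism of a graded ring that fixes the ideal $I_G$ permutes the primary components of any minimal primary decomposition, and this permutation preserves multidegrees.

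First, I would record that each factor $\sigma_i$ is a linear involution on $\PP^5$ permuting the six Pl\"ucker coordinates of $A_i$ up to signs, so its pullback $\sigma_i^*$ is a linear automorphism of $\CC[A_i]$ preserving the grading by $e_i$. The composite $\sigma^* = \sigma_1^* \circ \cdots \circ \sigma_\ell^*$ is then a $\ZZ^\ell$-graded ring automorphism with $\sigma^{*2}=\mathrm{id}$. Next I would check by direct substitution that $\sigma^*(A_iA_i) = A_iA_i$ and $\sigma^*(A_iA_j) = A_iA_j$ for all $i,j$; for the bilinear form, the six monomials of \eqref{eq:ABquadric} get permuted among themselves, with the sign flips $a_{13}\mapsto -a_{24}$ and $a_{24}\mapsto -a_{13}$ (and likewise for $b$) combining so that the two signs cancel in each affected term. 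It follows that $\sigma^*$ fixes the defining ideal of ${\rm Gr}(2,4)^\ell$ and fixes the ideal $I_G$ for every graph $G$.

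With $\sigma^*$ a ring automorphism satisfying $\sigma^*(I_G)=I_G$, the third step is immediate: given any minimal primary decomposition $I_G = Q_1 \cap \cdots \cap Q_r$, the image $I_G = \sigma^*(Q_1) \cap \cdots \cap \sigma^*(Q_r)$ is again a minimal primary decomposition, so $\sigma^*$ permutes the primary components of $I_G$. The involution swapping $I_{[\ell]}$ with $I_{[\ell]}^*$ is the prototypical instance, and the dualities visible in \eqref{eq:5a2} and \eqref{eq:5a3} are further examples.

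Finally, for the multidegree claim, because $\sigma^*$ is $\ZZ^\ell$-graded, the quotients $\CC[A_1,\ldots,A_\ell]/Q$ and $\CC[A_1,\ldots,A_\ell]/\sigma^*(Q)$ have identical $\ZZ^\ell$-graded Hilbert series, and hence identical multidegrees in $H^*((\PP^5)^\ell,\ZZ)$. The only real content lies in the six-term verification $\sigma^*(A_iA_j)=A_iA_j$ of Step 2; everything else is formal. The conceptual point that makes the statement non-vacuous is that a ring automorphism can fix an ideal while genuinely permuting its primary components, exactly as in the $I_{[\ell]} \leftrightarrow I_{[\ell]}^*$ swap.
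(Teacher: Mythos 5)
Your proof is correct. Let me briefly compare it with the paper's argument.

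For the first claim (that the Hodge star acts on $I_G$ and its primary decompositions), both proofs rest on the same observation — the Hodge star preserves incidence — but you make it explicit by verifying that the pullback $\sigma^*$ is a $\ZZ^\ell$-graded ring automorphism fixing each $A_iA_i$ and each $A_iA_j$. The six-term check you flag is correct: after the substitution $a_{12}\!\leftrightarrow\!a_{34}$, $a_{14}\!\leftrightarrow\!a_{23}$, $a_{13}\!\mapsto\!-a_{24}$, $a_{24}\!\mapsto\!-a_{13}$ (and the same for $b$), the sign flips cancel in pairs and the six monomials of \eqref{eq:ABquadric} are permuted among themselves. The paper phrases this geometrically (dual lines, incidences preserved) without writing out the computation.

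For the multidegree claim you take a genuinely different — and more elementary — route. The paper deduces it from Theorem~\ref{thm:multidegree}: the LS degree of every component is preserved because the projection $\psi_u$ is $\sigma$-equivariant, so all coefficients $\gamma_u$ agree for dual components. Your argument bypasses the Schubert-calculus interpretation entirely: since $\sigma^*$ is a $\ZZ^\ell$-graded automorphism, $\CC[A_1,\ldots,A_\ell]/Q$ and $\CC[A_1,\ldots,A_\ell]/\sigma^*(Q)$ have identical $\ZZ^\ell$-graded Hilbert series, hence identical multidegrees, directly from the commutative-algebra definition of multidegree. This is shorter and self-contained (it does not depend on Theorem~\ref{thm:multidegree}), whereas the paper's version stays closer to its running geometric narrative of LS degrees and Schubert problems. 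Both are valid; yours is the more economical route if one already knows multidegree is a graded-Hilbert-series invariant, which is indeed standard as stated in \cite{CCA}.
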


\begin{proof}
The Hodge star (\ref{eq:hodgestar}) maps
each line in $\PP^3$ to its dual line. The two lines come from
an orthogonal pair of $2$-dimensional linear subspaces in 
$\CC^4$. Incidences are preserved.
Concurrent lines are mapped to coplanar lines and vice versa.
The LS degree of each component of $V_{G_u}$ is preserved
because the projection
$\psi_u:V_{G_u} \rightarrow {\rm Gr}(2,4)^d$ is equivariant under (\ref{eq:hodgestar}).
\end{proof}

\section{The Rigidity Matroid}
\label{sec5}

Configurations of lines in $3$-space
can be studied using techniques from
rigidity theory \cite{MS, SS}.
Here we follow the approach pioneered by
Elekes and Sharir \cite{ES}
and developed by Raz \cite{Raz}.  We identify $\RR^4$ with the space $\RR^2 \times \RR^2$ of pairs
$(s,t)$ of points $s = (s_1,s_2)$ and $t=(t_1,t_2)$ in the 
Euclidean plane $\RR^2$. Writing $(u,v)$ for another pair, we write the lines $A,B$ in~\eqref{eq:ABCDE} as
\begin{equation}
\label{eq:matricesST1} 
 \begin{matrix} \alpha_1 = s_2 - t_2, & \alpha_2 = t_1 - s_1 \, , & \beta_1 = u_2 - v_2 \, , & \beta_2 = v_1 -u_1 \, ,\\
\alpha_3 = s_1 + t_1 \, , & \alpha_4 = s_2 + t_2  \, , & \beta_3 = u_1 + v_1 \, ,  & \beta_4 = u_2 + v_2 \, .
 \end{matrix}
\end{equation}
Then the equation $\widetilde{AB} = 0$ for 
the incidence of lines $A$ and $B$ in (\ref{eq:AdotB}) translates into
\begin{equation}
\label{eq:matricesST2} 
||s - u||^2 \,\, = \,\, (s_1 - u_1)^2 \,+ \,(s_2 - u_2)^2 \quad = \quad
(t_1 - v_1)^2 \,+ \,(t_2 - v_2)^2  \,\, = \,\,
||t - v||^2 \, .
\end{equation}
This derivation establishes the following interpretation for the incidence of two lines in $\PP^3$.

\begin{lemma} \label{lem:pointpairs}
The Grassmannian ${\rm Gr}(2,4)$ is a natural compactification of the space $\RR^2 \times \RR^2$
of planar point pairs. Given two such pairs, the Euclidean distance in the first component
equals that in the second component if and only if the two corresponding lines intersect in~$\PP^3$.
\end{lemma}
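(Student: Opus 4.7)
The plan is to treat the two claims separately, both by direct computation from the defining formulas (\ref{eq:ABCDE})--(\ref{eq:AdotB}) and the substitutions (\ref{eq:matricesST1}).

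For the compactification statement, I would observe that the parametrization in (\ref{eq:ABCDE}) already identifies the affine chart $\{a_{12} \neq 0\}$ of $\mathrm{Gr}(2,4)$ with $\CC^4$, with complement the Schubert divisor $\{a_{12} = 0\}$. Restricting to real points gives $\RR^4$ as an open dense subset of the real Grassmannian inside $\mathrm{Gr}(2,4)$. It then suffices to check that the assignment $(s,t) \mapsto (\alpha_1, \alpha_2, \alpha_3, \alpha_4)$ in (\ref{eq:matricesST1}) is a linear isomorphism $\RR^2 \times \RR^2 \to \RR^4$. This is immediate by solving for $(s_1,t_1)$ from $\alpha_2 = t_1 - s_1$ and $\alpha_3 = s_1 + t_1$, and for $(s_2, t_2)$ from $\alpha_1 = s_2 - t_2$ and $\alpha_4 = s_2 + t_2$; the coefficient matrix has determinant $\pm 2$, so the map is bijective. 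Composing yields an open embedding of $\RR^2 \times \RR^2$ into $\mathrm{Gr}(2,4)$ as claimed.

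For the second claim, the key calculation is to substitute (\ref{eq:matricesST1}) into the $2 \times 2$ determinant formula in (\ref{eq:AdotB}), namely $\widetilde{AB} = (\alpha_1 - \beta_1)(\alpha_4 - \beta_4) - (\alpha_2 - \beta_2)(\alpha_3 - \beta_3)$. I would compute
\begin{align*}
\alpha_1 - \beta_1 &= (s_2 - u_2) - (t_2 - v_2), & \alpha_4 - \beta_4 &= (s_2 - u_2) + (t_2 - v_2), \\
\alpha_2 - \beta_2 &= (t_1 - v_1) - (s_1 - u_1), & \alpha_3 - \beta_3 &= (s_1 - u_1) + (t_1 - v_1).
\end{align*}
Using the identity $(x-y)(x+y) = x^2 - y^2$ on each product collapses the expression to
\[
\widetilde{AB} \;=\; \bigl[(s_1 - u_1)^2 + (s_2 - u_2)^2\bigr] - \bigl[(t_1 - v_1)^2 + (t_2 - v_2)^2\bigr] \;=\; \|s-u\|^2 - \|t-v\|^2,
\]
which is the content of (\ref{eq:matricesST2}). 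Since it was noted right after (\ref{eq:ABquadric}) that $\widetilde{AB} = 0$ holds precisely when the two lines meet in $\PP^3$, the equivalence of incidence and equal Euclidean distances follows at once.

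There is no real obstacle here: both parts reduce to routine linear algebra and a short polynomial identity. The only subtlety worth noting is that the identification in Lemma \ref{lem:pointpairs} lives in the affine chart $a_{12} \neq 0$, so the compactifying locus $\{a_{12} = 0\}$ corresponds to limiting configurations where the point pair escapes to infinity; this is the geometric meaning of ``natural compactification'' and explains why passing through the real affine chart (rather than a projective closure of $\RR^2 \times \RR^2$ on its own) is the correct setup for the rigidity-theoretic reformulation used in Section \ref{sec5}.
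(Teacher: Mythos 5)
Your proof is correct and follows the same route as the paper: the paper derives (\ref{eq:matricesST2}) directly from substituting (\ref{eq:matricesST1}) into (\ref{eq:AdotB}) and lets that derivation stand as the justification, while you merely spell out the identity $(x-y)(x+y)=x^2-y^2$ step by step and record the (obviously invertible) linear change of coordinates underlying the compactification claim. Both parts check out; there is no gap.
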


It makes sense to consider real numbers when speaking about the Euclidean distance among points in the plane. 
On the other hand, all our varieties will continue to be complex. To study $\ell$ lines, by (\ref{eq:matricesST1}), our polynomial ring $\CC[\alpha,\beta,\gamma, \ldots]$ in $4\ell$ variables is  rewritten as
$\CC\bigl[s^{(i)}_1,s^{(i)}_2,t^{(i)}_1,t^{(i)}_2\,: \, i \in [\ell] \bigr]$.
 The ideal $I_G$ of a graph $G \subseteq \binom{[\ell]}{2}$ is generated by the quadrics
  $$ || s^{(i)} - s^{(j)} ||^2 \,-\, || t^{(i)} - t^{(j)} ||^2 \qquad \hbox{for all edges $ij \in G$}.$$
  The variety of $I_G$ is a cone in $\CC^{4\ell}$, and its closure in
${\rm Gr}(2,4)^\ell$ is the incidence variety $V_G$.
Abusing notation, we now also write $V_G$ for this affine cone.

As a direct consequence of Lemma \ref{lem:pointpairs}, we 
obtain the connection to rigidity theory in dimension $2$.
We write $\RR^{2 \ell}$ for the space of
configurations $s= (s^{(1)}, \ldots, s^{(\ell)})$ of $\ell $ points
 in the Euclidean plane.
The following map records the pairwise distances indexed by the graph~$G$:
\begin{equation}
\label{eq:deltamap} \delta_G \,: \,  \RR^{2\ell}\, \rightarrow \,\RR^G \,\,,\,\,\,s \,\mapsto \,
\bigl(\, || s^{(i)} - s^{(j)} ||^2\, )_{ij \in G} \, . 
\end{equation}
The image and fibers of the map $\delta_G$ are central objects
in rigidity theory \cite{MS, SS}: the dimension of the image of $\delta_G$ is equal to the \emph{rigidity rank} of $G$, as we explain below.
Lemma~\ref{lem:pointpairs} implies the following result, which is what Raz \cite{Raz} calls the {\em Elekes-Sharir framework}.

\begin{proposition} \label{prop:affineincidence}
The  incidence variety equals
$ V_G = \{(s,t) \in \RR^{2\ell} \times \RR^{2\ell} : \delta_G(s) = \delta_G(t)\}$.
\end{proposition}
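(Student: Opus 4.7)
The plan is simply to unpack the ideal $\widetilde I_G$ under the Elekes--Sharir change of variables (\ref{eq:matricesST1}) and recognize the generators, one per edge, as exactly the equations $\|s^{(i)}-s^{(j)}\|^2 = \|t^{(i)}-t^{(j)}\|^2$ that cut out the fiber set $\{\delta_G(s)=\delta_G(t)\}$.

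First I would recall that, after dehomogenization (Proposition~\ref{prop:WeFind}), the affine cone $V_G \subset \CC^{4\ell}$ is cut out by the polynomials $\widetilde{A_iA_j}$ for $ij \in G$. Next I would note that (\ref{eq:matricesST1}), applied vertex by vertex, is a linear isomorphism between the affine line coordinates $(\alpha^{(i)},\beta^{(i)},\ldots) \in \CC^{4\ell}$ and the point-pair coordinates $(s^{(i)},t^{(i)})_{i \in [\ell]} \in \CC^{2\ell} \times \CC^{2\ell}$; restricting to real points identifies $\RR^{4\ell}$ with $\RR^{2\ell}\times\RR^{2\ell}$. The short calculation already performed between (\ref{eq:matricesST1}) and (\ref{eq:matricesST2}) then gives the polynomial identity
\[
\widetilde{A_iA_j} \,\, = \,\, \|s^{(i)}-s^{(j)}\|^2 \,-\, \|t^{(i)}-t^{(j)}\|^2,
\]
one such identity for each unordered pair $\{i,j\}$. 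Consequently the generators of $\widetilde I_G$, pulled back to $(s,t)$-coordinates, are precisely the squared-distance differences indexed by the edges of $G$. By the definition of $\delta_G$ in (\ref{eq:deltamap}), the simultaneous vanishing of these differences for every $ij \in G$ is literally the condition $\delta_G(s)=\delta_G(t)$, and intersecting with the real locus yields the stated equality of sets.

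The statement is essentially a bookkeeping corollary of Lemma~\ref{lem:pointpairs} and is not delicate. The only thing requiring a moment's care is the verification that (\ref{eq:matricesST1}) is genuinely invertible, so that the vanishing locus transports faithfully under the change of coordinates; a quick check of the $4 \times 4$ coefficient matrix (its determinant is nonzero) settles this, after which the proposition is immediate from applying the pointwise condition of Lemma~\ref{lem:pointpairs} edge by edge.
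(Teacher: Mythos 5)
Your proposal is correct and matches the paper's reasoning: the paper treats this proposition as an immediate consequence of Lemma~\ref{lem:pointpairs}, i.e.\ of the computation turning $\widetilde{A_iA_j}$ into $\|s^{(i)}-s^{(j)}\|^2-\|t^{(i)}-t^{(j)}\|^2$ via (\ref{eq:matricesST1}), applied edge by edge, exactly as you do. Your extra check that the coordinate change is an invertible linear map is a harmless bit of added care, not a departure from the paper's argument.
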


A cornerstone in rigidity theory is the
{\em Geiringer-Laman Theorem}, which we state next.
For a textbook introduction see \cite[Chapter~8]{SS}.
The {\em rigidity matroid} $\mathcal{R}_\ell$ is the matroid on the ground set $\binom{[\ell]}{2}$
which is given by the
Jacobian matrix $J_\ell$ of the map $\delta_{K_\ell}$.
The matrix $J_\ell$ has format
$2\ell \times \binom{\ell}{2}$, and its rank is
 $2 \ell - 3$. Hence the rigidity matroid
$\mathcal{R}_\ell$ has rank $2\ell-3$.

\begin{figure}[h]
	\centering
	\includegraphics[scale = .35]{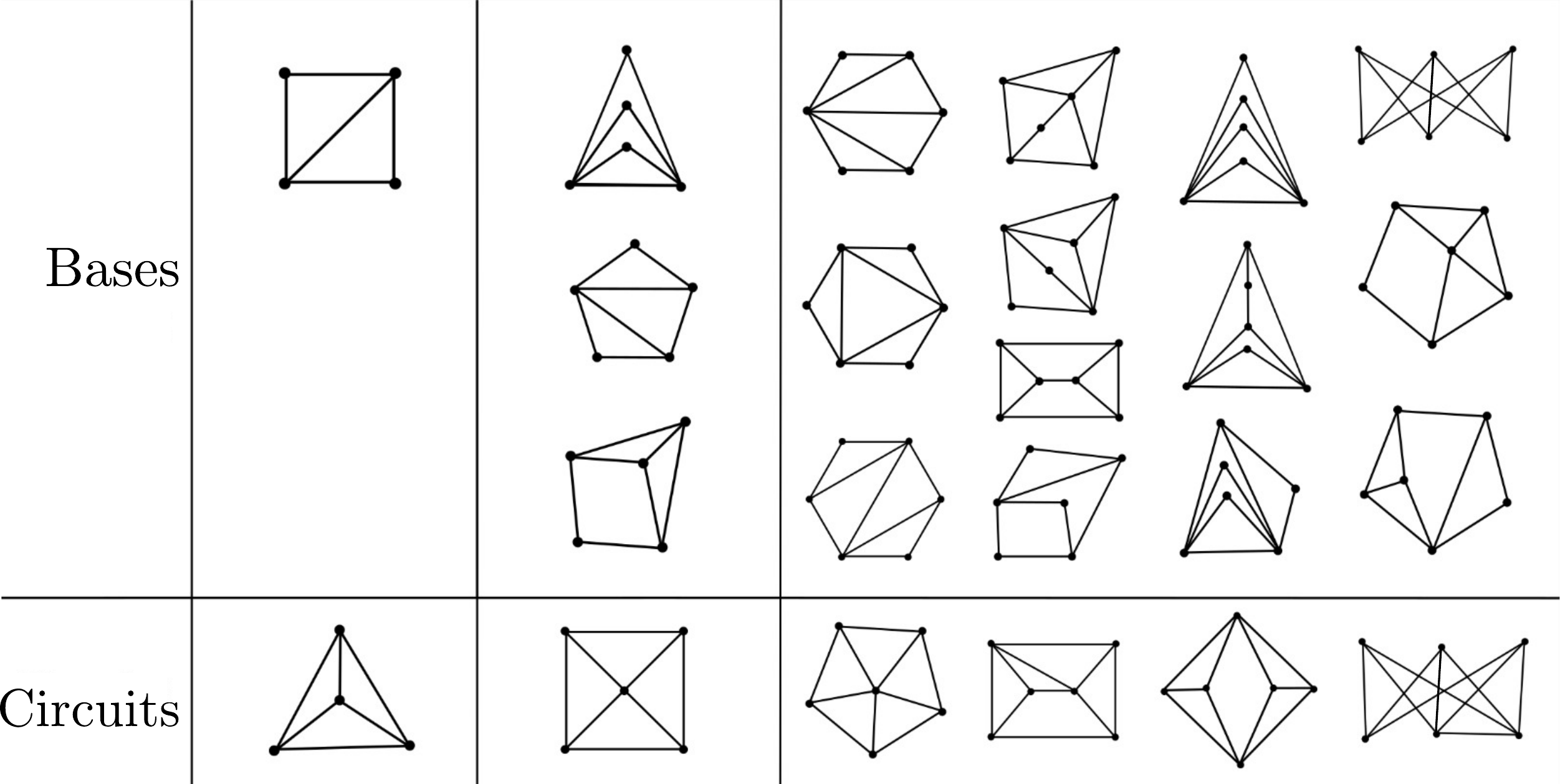}
	\caption{Bases and spanning circuits of the rigidity matroid for $\ell = 4, 5, 6$.  }
	\label{figure:rigidity}
\end{figure}

\begin{theorem}[Geiringer-Laman]\label{thm:Geiringer_Laman}
A graph $G \subseteq \binom{[\ell]}{2}$ is independent in the rigidity matroid $\mathcal{R}_\ell$
if and only if the induced subgraph of $G$ on any set $I\subseteq [\ell]$
 has at most $2|I|-3$ edges.
 The bases of $\mathcal{R}_\ell$ are the maximally independent sets.
 These rigidity bases have $2 \ell-3$ vertices and they are
 obtained from $K_2$ by successively performing the following two operations: \vspace{-0.14cm}
 \begin{itemize}
 \item  add a new vertex and connect it to two old vertices; \vspace{-0.2cm}
 \item  subdivide an edge by a new vertex and connect that to an old vertex.
 \end{itemize}
 \end{theorem}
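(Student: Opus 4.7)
The plan is to split the statement into three parts: (i) necessity of the Laman sparsity condition for any independent $G$, (ii) that the two Henneberg-type operations applied to $K_2$ only produce independent sets of maximal size $2\ell-3$, and (iii) that every such maximal independent set arises from $K_2$ in this way.

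First I would address (i) using the invariance of $\delta_G$ under Euclidean motions. The orientation-preserving isometry group of $\RR^2$ is $3$-dimensional and leaves every squared distance fixed, so for $\ell \geq 2$ the map $\delta_{K_\ell}$ factors through a quotient of dimension $2\ell-3$; consequently $\mathrm{rank}(J_\ell) \leq 2\ell-3$. The same argument applied to the Jacobian of $\delta_{G[I]}$ at a generic configuration gives $|\{ij \in G : i,j \in I\}| \leq 2|I|-3$ whenever the corresponding rows of $J_\ell$ are linearly independent. Independence of $G$ in $\mathcal{R}_\ell$ forces this conclusion for every $I \subseteq [\ell]$, yielding the Laman count.

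Next I would treat (ii) by checking that each Henneberg move increases the rank of the Jacobian by exactly $2$, so that independence and the edge count $2\ell-3$ are preserved. For a degree-$2$ addition, placing the new vertex $\ell+1$ at a generic point makes the two new rows have an invertible $2 \times 2$ block in the columns indexed by $s^{(\ell+1)}$, whence they are linearly independent from the preexisting rows. For an edge subdivision, removing the old edge $ij$ costs exactly one from the rank and then the three new rows restore rank $3$; this follows from a short cofactor expansion along the columns indexed by $s^{(\ell+1)}$ and by the third neighbor $k$, together with a genericity argument on the placement of the new vertex.

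The main obstacle is (iii), the reverse Henneberg reduction. Given an independent $G$ with $|G| = 2\ell-3$, the degree sum $4\ell-6$ forces some vertex $v$ of degree $2$ or $3$. If $\deg(v)=2$, deletion of $v$ leaves a graph on $\ell-1$ vertices with $2(\ell-1)-3$ edges that still satisfies the sparsity condition, and induction on $\ell$ finishes this case. The delicate case is when every low-degree vertex has degree exactly $3$: given such $v$ with neighbors $\{a,b,c\}$, one must locate a pair among $\{ab, ac, bc\}$ that is \emph{not} already an edge of $G$ \emph{and} whose addition to $G - v$ yields a graph satisfying the Laman count and sparsity. Existence of such a non-edge requires a careful analysis of the maximal sparse subgraphs of $G$ passing through $\{a,b,c\}$, ruling out the configuration in which every admissible edge would create a Laman-violating dense subset; this is the combinatorial heart of the argument. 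Once this Henneberg-2 reduction is available, the induction closes and both the sparsity characterization of independence and the Henneberg construction of bases follow simultaneously.
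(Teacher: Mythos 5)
The paper does not actually prove Theorem~\ref{thm:Geiringer_Laman}; this is a classical result of Pollaczek-Geiringer (1927) and Laman (1970), and the paper states it with a pointer to \cite[Chapter~8]{SS}. So there is no ``paper's own proof'' to compare against. What you have written is a sketch of the standard Henneberg-construction proof of the theorem, and your overall architecture is right: (i) the sparsity count is necessary by an isometry-group dimension count, (ii) the two Henneberg moves preserve independence, (iii) every minimally rigid graph admits an inverse Henneberg move, and then one inducts.

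The genuine gap is in step (iii), and you name it yourself without closing it. When the chosen low-degree vertex $v$ has degree exactly three with neighbors $a,b,c$, you must prove that at least one non-edge among $ab,ac,bc$ can be added to $G-v$ while preserving $(2,3)$-sparsity. Saying this ``requires a careful analysis of the maximal sparse subgraphs passing through $\{a,b,c\}$'' and is ``the combinatorial heart of the argument'' is an accurate diagnosis but not a proof. The actual argument (e.g.~via Laman's original lemma, or via the submodular-closure / tight-subgraph analysis) shows that if both $ab$ and $ac$ cannot be added then the tight sets certifying this can be combined with $v$ to produce a subgraph violating the $2|I|-3$ count in $G$ itself, a contradiction; this is where submodularity of the sparsity count is used essentially and where the argument can go wrong if one is not careful about how the tight sets overlap. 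Without this lemma, the induction in (iii) does not close. A secondary, smaller gap: your (iii) only treats graphs with exactly $2\ell-3$ edges (bases), while the theorem also characterizes all independent sets; you need the additional observation that any $(2,3)$-sparse graph on $[\ell]$ extends to a $(2,3)$-sparse graph with $2\ell-3$ edges (this follows from the matroid exchange property once the basis case is done, but it should be said). Your step (ii) for the edge-split move is also stated loosely (``a short cofactor expansion\ldots together with a genericity argument''), but that one can genuinely be made rigorous by elementary means, so I would not count it as a gap of the same order.
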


We write ${\rm rank}(G)$ for the rank of a graph $G$, viewed as a set in the rigidity matroid
$\mathcal{R}_\ell$. A graph is a {\em rigidity circuit} if ${\rm rank}(G) < |G|$ but ${\rm rank}(G') = |G'|$ for every subgraph $G' \subset G$.
Figure \ref{figure:rigidity} shows all unlabeled graphs
corresponding to rigidity bases and  circuits for $\ell \leq 6$.
In rigidity theory, a graph $G \subseteq\binom{[\ell]}{2}$ is 
called \emph{$(a,b)$-sparse}, if 
$\,|\,G[S]\,| \,\leq \,a  |S| - b\,$ holds for every non-empty 
subset $S \subseteq[\ell]$.
Here $a,b \in \NN$ and $G[S]$ is the induced subgraph of $G$ on $S$.

\begin{remark}\label{rem:sparsity}
By Theorem~\ref{thm:Geiringer_Laman}, a graph $G$ is independent
in $\mathcal{R}_\ell$  if and only if it is $(2,3)$-sparse. In this case, $G$ must contain a vertex of degree at most three. Moreover, $G$ is $(2,4)$-sparse if and only if it does not contain any induced bases on at least three vertices. 
\end{remark}

We now show how rigidity theory gives information about the codimension of $V_G$.
To this end, we associate two more varieties to a given graph  $G \subseteq \binom{[\ell]}{2}$.
Following \cite[Section~2]{Raz}, we define $X_G$ as the Zariski closure in ${\rm Gr}(2,4)^\ell$ of all configurations of $\ell$ \textit{distinct} lines satisfying the incidences of $G$. 
The \textit{strict realization} $Y_G$ is the Zariski closure  
of all configurations of $\ell$ \textit{distinct} lines satisfying \textit{exactly} the incidences of $G$, 
and no more. Note that  $Y_G \subseteq X_G \subseteq V_G$, as well as $Y_G \subseteq W_G \subseteq V_G$. 
The rigidity matroid constrains the codimension of $Y_G$ in ${\rm Gr}(2,4)^\ell$.

\begin{proposition}\label{prop:codim_WG}
For any graph $\,G \subseteq\binom{[\ell]}{2}$, we have
        $\,{\rm codim}(Y_G) \,\geq \, {\rm rank}(G)$.
\end{proposition}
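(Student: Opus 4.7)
The plan is to work in the affine coordinates of Section \ref{sec2} over $\CC$ and to use Proposition \ref{prop:affineincidence} to realize $V_G = F^{-1}(0)$, where
\[ F \,:\, \CC^{4\ell} \,\to\, \CC^G, \qquad F(s,t) \,:=\, \delta_G(s) - \delta_G(t) . \]
Since $Y_G \subseteq V_G$, it is enough to bound the dimension of every irreducible component $Y$ of $Y_G$ from above by $4\ell - \mathrm{rank}(G)$, and this will come from a Jacobian calculation at a generic smooth point of $Y$.

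The key computation is the rank of the Jacobian $DF$. Writing $J_G$ for the Jacobian of $\delta_G$, the matrix $DF(s,t)$ has block form $[\,J_G(s) \,\vert\, -J_G(t)\,]$. For any $(s,t) \in V_G$, the column spaces of both blocks are tangent subspaces to $Z := \overline{\delta_G(\CC^{2\ell})}$ at the common image point $z := \delta_G(s) = \delta_G(t)$. Recall that $\dim Z = \mathrm{rank}(G)$, by the definition of the rigidity rank as the generic rank of the Jacobian of $\delta_{K_\ell}$ restricted to the columns indexed by $G$. Hence $\mathrm{rank}\,DF(s,t) \leq \dim T_z Z$, and for generic $(s,t)$—that is, $s,t$ both smooth points of $\delta_G$ and $z$ a smooth point of $Z$—equality holds with $\mathrm{rank}\,DF(s,t) = \mathrm{rank}(G)$.

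With this in hand, I pick a smooth point $p = (s,t)$ of $Y$ at which $s,t$ avoid the critical locus of $\delta_G$ and $z$ avoids the singular locus of $Z$. Since $Y$ parameterizes $\ell$-tuples of pairwise distinct lines realizing exactly the incidences prescribed by $G$, it cannot lie in any of these proper closed subvarieties, so such a $p$ exists. The tangent space bound at $p$ then yields
\[ \dim Y \,=\, \dim T_p Y \,\leq\, \dim \ker DF(p) \,=\, 4\ell - \mathrm{rank}(G), \]
hence $\mathrm{codim}(Y) \geq \mathrm{rank}(G)$. As this holds for every irreducible component of $Y_G$, the proof is complete.

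The main obstacle is the genericity claim in the previous paragraph: one must verify that no component of $Y_G$ is contained in the critical locus of $\delta_G$, nor in the preimage of the singular locus of $Z$. This should follow from the openness of the strict-realization condition (distinct lines, no extra incidences) together with the $\mathrm{GL}(4)$-equivariance established in the proof of Proposition \ref{prop:WeFind}, since both degenerate loci are proper closed invariant subvarieties of $\CC^{4\ell}$; but it deserves careful verification.
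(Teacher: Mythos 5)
Your tangent-space framework is set up correctly (for a smooth point $p$ of a component $Y\subseteq V_G$ one has $T_pY\subseteq\ker DF(p)$, so $\dim Y\le 4\ell-\operatorname{rank}DF(p)$), but the proof has a genuine gap at exactly the step you flag: you need every irreducible component $Y$ of $Y_G$ to contain a point $(s,t)$ at which $\operatorname{rank}\,[\,J_G(s)\mid -J_G(t)\,]\ge\operatorname{rank}(G)$, and this is not a routine genericity statement. Since $Y$ is a proper subvariety of $\CC^{4\ell}$, nothing prevents it a priori from lying entirely inside the rank-deficient locus $\Sigma=\{\operatorname{rank}DF<\operatorname{rank}(G)\}$; indeed this really happens for components of $V_G$: the extraneous component of $V_{K_{2,5}}$ in Example \ref{ex:K25} (a double line with five lines meeting it) has codimension $9<10=\operatorname{rank}(K_{2,5})$, and since the Jacobian rank at any point is bounded by the local codimension, that entire component sits inside $\Sigma$. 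So the statement you must verify --- that no component of $Y_G$ lies in $\Sigma$ --- is essentially equivalent in difficulty to the proposition itself; it is precisely where the hypothesis of \emph{strict} realization (distinct lines, no extra incidences) has to be used, and your dimension count does not use it anywhere else. Moreover, the justification you sketch does not work: the critical locus of $\delta_G$ is a condition on the $s$-block alone, and it is not ${\rm GL}(4)$-invariant, because ${\rm GL}(4)$ acts by the linear-fractional transformations \eqref{eq:LFT}, which mix the $s$- and $t$-coordinates; ${\rm GL}(4)$-equivariance only guarantees (as in Proposition \ref{prop:WeFind}) that components of $Y_G$ meet the affine chart, not that they avoid $\Sigma$. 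Also, "openness of the strict-realization condition" is openness inside $V_G$, which says nothing about avoiding a closed locus defined by Jacobian minors rather than by incidence equations.

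By contrast, the paper proves Proposition \ref{prop:codim_WG} by induction on $\ell$: if $G$ contains a rigidity basis, it invokes \cite[Corollary 5]{Raz} to get ${\rm codim}(X_G)=2\ell-3$; otherwise Geiringer--Laman sparsity (Remark \ref{rem:sparsity}) produces a vertex of degree $r\le 3$, one projects it away, applies the induction hypothesis to $Y_{G'}$, and bounds the fiber, which is an intersection of $r$ Schubert divisors, using \cite[Lemma 7]{Raz} --- with a separate analysis of the exceptional case $r=3$ with the three lines mutually intersecting, where the fiber codimension drops to $2$ but an induced $K_4$ makes the rank increment drop correspondingly. That case analysis is the content your "generic smoothness" step would have to reproduce; as it stands, your argument establishes the bound only for components of $Y_G$ that happen to contain a regular framework, which is the crux left unproven.
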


\begin{proof}
    We prove this by induction on $\ell$. The case $\ell = 1$  is clear. 
    Suppose $G$ contains a basis of $\mathcal{R}_\ell$ as a subgraph.
    By~\cite[Corollary 5]{Raz}, we have ${\rm codim}(X_G) = 2 \ell - 3 = {\rm rank}(G)$.
     This implies the assertion.
Next  assume that $G$ is basis-free and  $Y_G \neq \emptyset$. 
By Remark~\ref{rem:sparsity}, $G$ has a vertex $v$ of degree $r$ with $0 \leq r \leq 3$. 
Let  $G^{'}$ be the graph obtained from $G$ by deleting~$v$.
The projection $\pi:{\rm Gr}(2,4)^\ell \rightarrow {\rm Gr}(2,4)^{\ell-1}$ forgets the line indexed by $v$. Let $Y $ be any irreducible component of $Y_G$. Then $\pi(Y) \subseteq Y_{G^{'}}$ and hence $Y_{G^{'}} \neq \emptyset$. Since $Y$ is irreducible,
we have
    \begin{equation} \!
        \dim(Y) \leq \dim(\pi(Y)) + \dim(\pi^{-1}(A) \cap Y) \leq \dim(Y_{G^{'}}) + \dim(\pi^{-1}(A) ) \quad
        \hbox{for all $ A \in \pi(Y)$.}
    \end{equation}
     By the induction hypothesis, ${\rm codim}(Y_{G^{'}}) \geq {\rm rank}(G^{'})$. 
     The fiber $\pi^{-1}(A)$ is the intersection of $r$ Schubert divisors in ${\rm Gr}(2,4)$. Since $A \in \pi(Y) \subseteq Y_{G^{'}}$, we can take the lines defining the $r$ divisors to be generic and satisfying precisely the intersection conditions of $G^{'}$. 
     The codimension of $\pi^{-1}(A)$ equals $r$, unless $r=3$ and all lines 
     defining the divisors intersect, by~\cite[Lemma 7]{Raz}. 
    This means that $G$ contains an induced $K_4$, which is a dependent set in the rigidity matroid. Hence, in this case, adding $r=3$ edges to the graph $G^{'}$ increases the rank by at most two.
    This matches the dimension of the fiber for this case. This implies that ${\rm codim}(Y) \geq {\rm rank}(G)$. Since $Y$ was any irreducible component of $Y_G$, the claim follows. 
\end{proof}

We now show that the rigidity matroid precisely encodes the codimension of $X_G$.

\begin{theorem}\label{thm:codim_XG}
For any graph $\,G \subseteq\binom{[\ell]}{2}$, we have
    $\,{\rm codim}(X_G) \,=\, {\rm rank}(G)$.
\end{theorem}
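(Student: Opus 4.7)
The strategy is to prove the two inequalities separately. The bound ${\rm codim}(X_G)\geq{\rm rank}(G)$ will follow from Proposition \ref{prop:codim_WG} applied to every incidence-refinement of $G$, while ${\rm codim}(X_G)\leq{\rm rank}(G)$ will come from the Elekes-Sharir description in Proposition \ref{prop:affineincidence} together with the fact that the rigidity matroid is, by definition, the Jacobian matroid of $\delta_{K_\ell}$.

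For the lower bound, stratify $X_G$ by its incidence pattern. By construction, $X_G$ is the Zariski closure of the configurations of $\ell$ pairwise distinct lines realizing at least the edges of $G$, hence
$$
X_G \,\,=\,\, \bigcup_{\substack{G'\supseteq G\\ Y_{G'}\neq\emptyset}} Y_{G'}.
$$
Since adding edges cannot decrease the rigidity rank, ${\rm rank}(G')\geq{\rm rank}(G)$ for every graph $G'$ appearing in this union, and Proposition \ref{prop:codim_WG} yields ${\rm codim}(Y_{G'})\geq{\rm rank}(G')\geq{\rm rank}(G)$. The union is finite and each $Y_{G'}$ is closed, so every irreducible component of $X_G$ is contained in some $Y_{G'}$, whence ${\rm codim}(X_G)\geq{\rm rank}(G)$.

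For the upper bound, I work in affine coordinates and invoke Proposition \ref{prop:affineincidence} to identify the affine cone $V_G\subseteq\CC^{4\ell}$ with the fiber product $\{(s,t)\in\CC^{2\ell}\times\CC^{2\ell}:\delta_G(s)=\delta_G(t)\}$. The Jacobian of $\delta_G$ consists of the $G$-indexed rows of $J_\ell$, so there is a dense Zariski-open subset $\Omega\subseteq\CC^{2\ell}$ on which ${\rm rank}\,d\delta_G={\rm rank}(G)$ and the image $\delta_G(\Omega)$ has dimension ${\rm rank}(G)$. Shrinking $\Omega$ further, I may assume the points $s^{(i)}$ are distinct and the induced lines from \eqref{eq:matricesST1} are pairwise distinct in $\PP^3$. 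Then $\delta_G|_\Omega$ is smooth of relative dimension $2\ell-{\rm rank}(G)$, so the restricted fiber product over $\Omega\times\Omega$ is smooth of dimension $4\ell-{\rm rank}(G)$. Its Zariski closure is contained in $X_G$ by the distinct-lines condition, exhibiting a subvariety of codimension exactly ${\rm rank}(G)$; hence ${\rm codim}(X_G)\leq{\rm rank}(G)$.

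The delicate step is the upper bound: I must ensure that the generic fiber-product points really correspond to configurations of $\ell$ distinct lines in $\PP^3$, rather than to degenerations that would land in $V_G\setminus X_G$. This reduces to noting that the parametrization \eqref{eq:matricesST1} sending point pairs to lines in ${\rm Gr}(2,4)$ is generically injective, so ``distinct lines'' cuts out a non-empty Zariski-open condition on $\Omega\times\Omega$ that does not affect the closure. Pure-dimensionality of this stratum then follows from $d\delta_G$ having constant rank on $\Omega$, completing the proof.
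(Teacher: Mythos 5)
Your proof is correct and follows essentially the same route as the paper: the lower bound via the finite stratification $X_G=\bigcup_{G'\supseteq G} Y_{G'}$ together with Proposition \ref{prop:codim_WG} and monotonicity of rank, and the upper bound via the Elekes--Sharir description and the fact that the generic fiber of $\delta_G$ has dimension $2\ell-{\rm rank}(G)$. The only cosmetic difference is in the upper bound: the paper projects $X_G$ onto the first factor (surjective because the diagonal lies in $V_{[\ell]}\subseteq X_G$) and counts fiber dimensions, whereas you build the fiber product over a good open set directly and use the distinct-lines open condition to place it inside $X_G$ --- which in fact makes explicit the step the paper states tersely, namely that the generic fiber of the projection restricted to $X_G$ coincides with the generic fiber of $\delta_G$.
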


\begin{proof}
We write
     $X_G \subseteq V_G \subseteq\mathbb{R}^{2\ell} \times \mathbb{R}^{2\ell}$ 
     as in Proposition~\ref{prop:affineincidence}.
    Let $\pi  \colon  X_G \rightarrow \mathbb{R}^{2\ell}$ denote the first coordinate projection,
     restricted to $X_G$. Since the diagonal in $\mathbb{R}^{2\ell} \times \mathbb{R}^{2\ell}$ 
     is contained in $V_{[\ell]} \subseteq X_G$, the map $\pi$ is surjective.
      It follows that the dimension of $X_G$ is bounded  below by $2 \ell$ plus the dimension of the generic fiber of $\pi$. The generic fiber of $\pi$ is the same as the generic fiber of $\delta_G$, which has dimension $2\ell - {\rm rank}(G)$, as explained below \eqref{eq:deltamap}. We conclude that ${\rm codim}(X_G) \leq {\rm rank}(G)$.
       The reverse inequality follows from Proposition~\ref{prop:codim_WG} and
    \begin{equation}
        X_G \quad = \bigcup_{E \subseteq \binom{[\ell]}{2} \setminus G} \!\!\! Y_{G \cup E} .
    \end{equation}
    Here the union is over all possible subsets $E$ of non-edges of $G$, including the empty set.
\end{proof}

\begin{corollary}\label{cor:VG_CI_then_G_ind}
    Let $G \subseteq\binom{[\ell]}{2}$ be a graph. Then, ${\rm codim}(V_G) \leq {\rm rank}(G) \leq \min\{|G|, 2 \ell -3\}$.
    Therefore, if $V_G$ is a complete intersection, then $G$ is independent in the rigidity matroid $\mathcal{R}_\ell$.
\end{corollary}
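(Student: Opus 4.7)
\medskip

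\noindent\textbf{Proof proposal.} The plan is to chain together Theorem~\ref{thm:codim_XG} with the containments relating $X_G$, $V_G$ and with basic facts about the rigidity matroid $\mathcal{R}_\ell$. Everything ultimately reduces to a short inequality game.

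First I would observe that, by the definitions preceding Proposition~\ref{prop:codim_WG}, we have the containment $X_G \subseteq V_G$ inside ${\rm Gr}(2,4)^\ell$. Since the codimension of a subvariety is at least the codimension of any variety containing it, this gives
\begin{equation*}
{\rm codim}(V_G) \,\,\leq\,\, {\rm codim}(X_G) \,\,=\,\, {\rm rank}(G),
\end{equation*}
where the equality is exactly Theorem~\ref{thm:codim_XG}. Next, the two upper bounds on ${\rm rank}(G)$ come for free from general matroid theory: the rank of any subset of the ground set is at most its cardinality, giving ${\rm rank}(G) \leq |G|$, and the rank of the whole matroid $\mathcal{R}_\ell$ is $2\ell - 3$ (stated in the paragraph defining $\mathcal{R}_\ell$), giving ${\rm rank}(G) \leq 2\ell - 3$. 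Combining these establishes the chain ${\rm codim}(V_G) \leq {\rm rank}(G) \leq \min\{|G|,\, 2\ell-3\}$.

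For the ``therefore'' clause, I would use that $V_G$ being a complete intersection means that the $|G|$ bilinear equations defining $I_G$ (inside the ideal of ${\rm Gr}(2,4)^\ell$) cut out a variety of the expected codimension, i.e.\ ${\rm codim}(V_G) = |G|$. Plugging into the inequality chain just derived gives
\begin{equation*}
|G| \,\,=\,\, {\rm codim}(V_G) \,\,\leq\,\, {\rm rank}(G) \,\,\leq\,\, |G|,
\end{equation*}
forcing ${\rm rank}(G) = |G|$, which is exactly the statement that $G$ is independent in $\mathcal{R}_\ell$.

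There is essentially no obstacle here: the corollary is a direct packaging of Theorem~\ref{thm:codim_XG} together with two trivial matroid bounds and the definition of complete intersection. The only small point worth double-checking is the inclusion $X_G \subseteq V_G$ (clear from the definitions, since $V_G$ is defined by the incidence equations alone with no distinctness or strictness conditions) and the convention that ``complete intersection'' refers to codimension equaling the number of generators of $I_G$ modulo the Pl\"ucker ideal, which is the usage consistent with Corollary~\ref{cor:CI}.
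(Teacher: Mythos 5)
Your proposal is correct and is essentially the paper's intended argument: the paper states this corollary without a separate proof precisely because it follows immediately from Theorem~\ref{thm:codim_XG} via the inclusion $X_G \subseteq V_G$, the trivial matroid bounds ${\rm rank}(G) \leq \min\{|G|, 2\ell-3\}$, and the convention that complete intersection means ${\rm codim}(V_G)=|G|$ (as made explicit in Corollary~\ref{cor:CI}). Your chain of inequalities matches this exactly, so there is nothing to add.
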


The necessary condition for complete intersection in Corollary~\ref{cor:VG_CI_then_G_ind}
is not sufficient; see Example \ref{ex:K25}. We characterize when $V_G$ is a complete intersection or irreducible in Section~\ref{sec7}.

\begin{example}[A bipartite graph] \label{ex:K25}
Fix $\ell = 7$ and  $G = K_{2,5}$. Then, $G$ is independent in the rigidity~matroid, and the strict realization $Y_G$ has the expected codimension ${\rm rank}(G)  = 10$. However, $V_G$ is not a complete intersection. In fact, ${\rm codim}(V_G) = 9$, because $V_G$ has an
extraneous component obtained by taking a double line and any five lines incident to it.
\end{example}

Rigidity circuits are interesting examples of 
graphs $G$ that are not complete intersections.
One infinite class of rigidity circuits are the wheel graphs 
$W_\ell$. The wheel  $W_\ell$ has $2\ell-2$ edges.
One vertex is connected to each vertex in a cycle of length $\ell-1$.
For instance, $W_4$ equals~$K_4$.

\begin{theorem} \label{thm:wheel}
The wheel graph $G=W_\ell$ has precisely
$2^{\ell-1} - 2(\ell-1)$ irreducible components.
\end{theorem}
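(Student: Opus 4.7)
My plan is to decompose $V_G$ by coloring the $\ell-1$ triangles of the wheel. Label the vertices so that $\ell$ is the hub and $1, 2, \ldots, \ell-1$ form the rim cycle; the triangles are $T_i = \{\ell, i, i+1\}$ for $i \in [\ell-1]$, with indices taken modulo $\ell-1$. For any configuration in $V_G$, the three lines of each $T_i$ are pairwise incident, and Example~\ref{ex:three} forces them to be concurrent (color $B$) or coplanar (color $W$). Assigning such a coloring $c \in \{B,W\}^{\ell-1}$ to each point yields a decomposition $V_G = \bigcup_c V^{(c)}$, where $V^{(c)}$ is the intersection, over all $i$, of $V_{\ell,i,i+1}$ when $c_i = B$ and $V^{*}_{\ell,i,i+1}$ when $c_i = W$.

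The central combinatorial observation is that two adjacent triangles $T_i$ and $T_{i+1}$ of the same color $B$ share the lines $L_\ell$ and $L_{i+1}$, whose unique point of intersection must serve as the common concurrency point of both triangles, so $L_\ell, L_i, L_{i+1}, L_{i+2}$ all pass through one point; the dual statement for $W$ gives a common plane through $L_\ell$. Iterating, any maximal monochromatic arc of $k$ consecutive triangles forces the $k+2$ lines involved to be jointly concurrent (for $B$) or coplanar (for $W$). In particular, when $c$ has exactly one minority triangle, the remaining $\ell-2$ same-color triangles force all $\ell$ lines to be concurrent (resp.\ coplanar), and the minority condition cuts out a proper codimension-$1$ subvariety of $V_{[\ell]}$ (resp.\ $V_{[\ell]}^*$). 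These $2(\ell-1)$ \emph{bad} colorings therefore yield no maximal component.

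For every \emph{good} coloring $c$---either monochromatic or with at least two triangles of each color---I will parametrize $V^{(c)}$ directly. Choose $L_\ell \in {\rm Gr}(2,4)$; for each maximal $B$-arc choose a concurrency point on $L_\ell$; for each maximal $W$-arc choose a plane through $L_\ell$; and for each rim vertex $i$, choose $L_i$ from the appropriate pencil (lines through the assigned point, lines in the assigned plane, or lines through the point lying in the plane at boundary vertices). The parameter space is an iterated projective bundle over ${\rm Gr}(2,4)$, hence irreducible, and it dominates $V^{(c)}$, so $V^{(c)}$ is irreducible. Distinctness and maximality among good colorings follow because a generic point of $V^{(c)}$ recovers $c$ uniquely: three generic concurrent lines through a point are not coplanar, and three generic coplanar lines need not be concurrent; this rules out $V^{(c)} \subseteq V^{(c')}$ for any $c' \neq c$.

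Finishing: the total number of colorings is $2^{\ell-1}$, and the bad colorings number $2(\ell-1)$ (two choices of minority color times $\ell-1$ placements), so the good colorings number $2^{\ell-1} - 2(\ell-1)$. The main technical hurdle will be confirming that the parametrization genuinely reaches all of $V^{(c)}$ rather than only a dense subset, and checking that degenerate loci---for instance those where $L_\ell$ coincides with a rim line---do not produce extra components; both issues can be handled by a dimension count and by embedding such loci inside $V^{(c')}$'s already accounted for.
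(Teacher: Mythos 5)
Your proposal follows essentially the same route as the paper's proof: bicolor the $\ell-1$ triangles, merge monochromatic arcs into concurrent/coplanar clusters to get an irreducible variety $V_{\sigma_1}\cap V_{\sigma_2}^*\cap\cdots$ for each coloring, discard the $2(\ell-1)$ colorings with a single minority triangle because their locus sits inside $V_{[\ell]}$ or $V_{[\ell]}^*$, and count the rest. The technical points you flag at the end (degenerate configurations such as coinciding lines, and checking that the cluster varieties exhaust $V_G$) are exactly the details the paper's own brief proof also leaves implicit, so your argument is correct at the same level of rigor.
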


\begin{proof}
We color the $\ell-1$ triangles with vertices $\{i,i+1,\ell\}$ black or white and merge adjacent triangles of the same color.
Let $\sigma_1,\sigma_3,\sigma_5,\ldots \subseteq [\ell]$ be the vertices of the black subpolygons
and $\sigma_2,\sigma_4,\sigma_6,\ldots \subseteq [\ell]$ those colored white.
The following subvariety of $V_G$ is irreducible:
\begin{equation}
\label{eq:VVVVVV} V_{\sigma_1  }\,\cap \, V_{\sigma_2 }^* \,\cap \, V_{\sigma_3 } \,
\cap \, V_{\sigma_4 }^* \,\cap \,V_{\sigma_5 } \,\cap \, V_{\sigma_6 }^* \,\cap
 \cdots. 
\end{equation} 
 Here the $\ell$ lines are arranged in a cycle of concurrent and coplanar clusters.
 One extreme case is when the $(\ell-1)$-gon is fully black resp.~fully white,
 in which case (\ref{eq:VVVVVV})  equals $V_{[\ell]}$ resp.~$V_{[\ell]^*}$.
 The other extreme case arises when $\ell$ is odd and the colors alternate:
 $$
 V_{12\ell} \, \cap \,   V_{23\ell}^* \, \cap \,
  V_{34\ell} \, \cap \,   V_{45\ell}^* \, \cap \,
  V_{56\ell} \, \cap \, \cdots\,
\quad  {\rm or}  \qquad
 V_{12\ell}^* \, \cap \,   V_{23\ell} \, \cap \,
  V_{34\ell}^* \, \cap \,   V_{45\ell} \, \cap \,
  V_{56\ell}^* \, \cap \, \cdots
$$
These are the two irreducible components of $W_{G}$.
Note that $W_G = \emptyset$ when $\ell \geq 6$ is even.
For instance, for $\ell= 6$, all $22 = 2^5 - 2\cdot 5$ components
require additional line incidences. The components
$V_{[6]}$ and $V_{[6]}^*$ have codimension $9$;
the other $20$ components have codimension $10$.

Among the $2^{\ell-1}$ colorings of the triangles,
there are $2(\ell-1)$ special colorings, where
 one triangle is colored differently from all the others.
If this happens then its variety
becomes $V_{[\ell]}$ or $V_{[\ell]}^*$. The lonely triangle
has no choice but to switch color.
For all $2^{\ell-1}-2(\ell-1)$ other
colorings, the varieties (\ref{eq:VVVVVV})
are distinct and irreducible. 
Their union equals $V_G$.
\end{proof}

Every triangulated $\ell$-gon $G$ is a rigidity basis,
with $|G| = 2 \ell-3$. Its variety $V_G$ is~a complete intersection
by Theorem \ref{thm:CI}. 
Such triangulations for  $\ell=4,5,6$ appeared in
Examples \ref{ex:whiteandblack}, \ref{ex:5a} and \ref{ex:trianghex}.
We close this section by describing the irreducible decomposition of $I_G$.

\begin{theorem} \label{thm:triangulations}
Let $G$ be the edge graph of any triangulation
of the $\ell$-gon. Then $V_G$ has~$2^{\ell-2}$
irreducible components, each of codimension $2 \ell-3$,
one for each bicoloring of the triangles.
\end{theorem}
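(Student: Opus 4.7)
The combinatorial framework comes first: a triangulation of the $\ell$-gon has $|G|=\ell+(\ell-3)=2\ell-3$ edges and exactly $\ell-2$ triangular faces, hence $2^{\ell-2}$ bicolorings. The text preceding the statement notes that $V_G$ is a complete intersection of codimension $2\ell-3$ by Theorem~\ref{thm:CI}, so every irreducible component of $V_G$ has codimension exactly $2\ell-3$. For each triangle $T$ with vertex set $\{i,j,k\}$, Example~\ref{ex:three} gives $V_{ij}\cap V_{ik}\cap V_{jk}=V_T^{\mathrm{conc}}\cup V_T^{\mathrm{cop}}$, and distributing intersections over unions yields
\begin{equation}
V_G \;=\; \bigcap_T \bigl(V_T^{\mathrm{conc}}\cup V_T^{\mathrm{cop}}\bigr) \;=\; \bigcup_c V_{G,c}, \qquad V_{G,c} \;:=\; \bigcap_T V_T^{c(T)},
\end{equation}
with $c$ ranging over the $2^{\ell-2}$ bicolorings. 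The heart of the argument is then to show that each $V_{G,c}$ contains an irreducible subvariety $Z_c$ of codimension $2\ell-3$, and that the $Z_c$ are pairwise distinct.

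I would prove irreducibility by induction on $\ell$, using the existence of an ear triangle. For $\ell\geq 4$, choose an ear $T_0=\{i,j,k\}$ whose apex $j$ has its two incident edges on the boundary of the $\ell$-gon and whose third edge $\{i,k\}$ is a diagonal. Deleting $j$ produces a triangulation $G'$ of the $(\ell-1)$-gon, and $c$ restricts to a bicoloring $c'$ of $G'$. Consider the projection $\pi\colon V_{G,c}\to V_{G',c'}$ forgetting line $A_j$. By the inductive hypothesis $V_{G',c'}$ is irreducible of codimension $2\ell-5$, hence of dimension $2\ell+1$. Over a general point of $V_{G',c'}$ the distinct lines $A_i,A_k$ meet at a unique point $p$ and span a unique plane $H$: if $c(T_0)=\mathrm{conc}$ the fiber of $\pi$ is the $\PP^2$ of lines through $p$, and if $c(T_0)=\mathrm{cop}$ the fiber is the $\PP^2$ of lines inside $H$. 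Over a dense open of the base $\pi$ is therefore a $\PP^2$-bundle, so the closure $Z_c$ of this preimage is irreducible of dimension $2\ell+3$, i.e.\ codimension $2\ell-3$ in ${\rm Gr}(2,4)^\ell$. The base case $\ell=3$ is Example~\ref{ex:three}.

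For distinctness, I would observe that a generic point of $Z_c$ realizes each triangle $T$ in precisely the mode $c(T)$, because three lines meeting at a general point are not coplanar and three coplanar lines in general position are not concurrent, so distinct bicolorings force distinct generic incidence patterns and hence distinct $Z_c$. Since $V_G$ is equidimensional of codimension $2\ell-3$ and $V_G=\bigcup_c V_{G,c}$ with each $Z_c\subseteq V_{G,c}$ of codimension $2\ell-3$, the $Z_c$ must exhaust the irreducible components of $V_G$, giving exactly $2^{\ell-2}$ of them. The main obstacle is the boundary locus where $A_i=A_k$: the fibers of $\pi$ jump in dimension there and could a priori enlarge $V_{G,c}$ beyond $Z_c$. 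The clean way around this is precisely the complete intersection property from Theorem~\ref{thm:CI}: any spurious subvariety arising from such degenerations would have codimension strictly greater than $2\ell-3$ and therefore cannot be an irreducible component of $V_G$, so the closures $Z_c$ account for everything. This interplay between the ear induction and the equidimensionality supplied by rigidity theory is what makes the argument close cleanly.
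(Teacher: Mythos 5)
Your opening moves are fine: the decomposition $V_G=\bigcup_c V_{G,c}$ with $V_{G,c}=\bigcap_T V_T^{c(T)}$, the ear-induction producing an irreducible $Z_c$ of codimension $2\ell-3$, and the distinctness of the $Z_c$ all work (though the induction should be carried by the $Z_{c'}$ themselves rather than by ``$V_{G',c'}$ is irreducible'', a statement your argument never actually establishes and does not need). The genuine gap is the exhaustion step. You assert that any spurious piece of $V_{G,c}$ lying over the degenerate locus $A_i=A_k$ has codimension strictly greater than $2\ell-3$, ``by the complete intersection property from Theorem~\ref{thm:CI}''. That is a non sequitur: the complete intersection property makes $V_G$ equidimensional of codimension exactly $2\ell-3$, so if a coincidence locus does carry a component, equidimensionality cannot exclude it --- it only says that component has the same codimension as everything else. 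The paper's own data show this danger is real for complete intersections: $K_{2,4}$ (Example~\ref{ex:K24}) and the graph of Example~\ref{eg:l_10} are complete intersections whose varieties have extraneous components, of the same codimension as the main one, on which several lines coincide. A naive dimension count does not rescue you either: over $\{A_i=A_k\}$ the fiber of your projection is a Schubert threefold rather than a $\PP^2$, so if that locus had codimension one in the $(2\ell+1)$-dimensional base you would get a set of dimension exactly $2\ell+3=\dim Z_c$, large enough to be a component.

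What is missing is an actual argument that for a polygon triangulation the coincidence loci are small. One route is the paper's rigidity machinery: by \eqref{eq:union_J} and Theorem~\ref{thm:codim_XG}, the configurations whose lines coincide according to a nonempty partition $J$ form a locus of codimension ${\rm rank}(G_J)+4s_J$ in ${\rm Gr}(2,4)^\ell$, so it suffices to prove ${\rm rank}(G_J)+4s_J>2\ell-3$ for every nonempty $J$ when $G$ triangulates an $\ell$-gon --- a strict, rank-level contraction-stability property of this specific class of graphs (the analogous strict inequality fails for $K_{2,4}$, which is exactly why that example has an extra component). Alternatively, show directly that $\{A_i=A_k\}$ has codimension at least two in the base at every stage of your induction. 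Note also that the paper itself proceeds differently: it fuses adjacent triangles of the same color into cells of a subdivision of the polygon and exhibits each component explicitly as an intersection $\bigcap_\sigma V_\sigma\cap\bigcap_\tau V_\tau^*$ over the black and white cells, as in the proof of Theorem~\ref{thm:wheel}, rather than inducting on an ear; your fibration argument is an attractive alternative, but it closes only once the coincidence loci are genuinely controlled.
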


\begin{proof}[Sketch of proof]
This is similar to the proof of Theorem~\ref{thm:wheel}.
Black triangles indicate concurrent lines,
while white triangles indicate coplanar lines.
Monochromatic regions are fused to form larger cells
in a subdivision of the $\ell$-gon.
Each irreducible component is then an intersection of
varieties $V_\sigma$ and $V_\tau^*$,
where $\sigma, \tau \subseteq [\ell]$ are cells in 
a subdivision of the $\ell$-gon.
\end{proof}

\section{Irreducibility and Complete Intersection}
\label{sec7}

In this section we determine when $V_G$ is irreducible or a complete intersection. 
Let $\mathcal{P}_{\ell}$ denote the set of all set partitions of $[\ell]$. When
listing elements $J=\{J_1,\dots,J_r \}$ of $\mathcal{P}_{\ell}$, we omit the singletons, 
i.e.~$J_1, \ldots, J_r \subseteq[\ell]$ are disjoint and $|J_i|>1$.
Thus the finest partition is $J = \emptyset$.
Let $|J|:=|J_1|+\cdots+|J_r|$ and  $s_J:=|J|-r$. We write $G_J$ for the graph on $\ell - s_J$ vertices obtained from  $G \subseteq\binom{[\ell]}{2}$ by identifying all vertices within each $J_i$, merging parallel edges, and deleting
resulting loops. If $J = \emptyset$, then $G_J = G$ and $s_J = 0$. An important observation~is 
\begin{equation}\label{eq:union_J}
    V_G \,\,\,\, = \,\,\bigcup_{J \in \mathcal{P}_{\ell}} \tilde{X}_{G_J}  ,
\end{equation}
where $\tilde{X}_{G_J} \subseteq {\rm Gr}(2,4)^{\ell}$ is the variety of $\ell$-tuples in $V_G$ where the lines in each $J_i \in J$ coincide. We can identify $\tilde{X}_{G_J}$ with $ X_{G_J} \subseteq{\rm Gr(2,4)}^{\ell - s_J}$.
Note that ${\rm codim}(\tilde{X}_{G_J})={\rm codim}(X_{G_J})+4s_J$. 

\begin{theorem}[Codimension]\label{thm:codim}
For any graph $G \subseteq\binom{[\ell]}{2}$, the incidence variety $V_G$ satisfies
    \begin{equation}\label{eq:cod}
      \,  {\rm codim}(V_G) \,\,\,=\,\,\, \min_{J \in \mathcal{P}_{\ell}} \big\{ {\rm rank}(G_{J})+4  s_J   \big\} \, .
    \end{equation}
If $G$ is $(2,3)$-sparse then    
     \begin{equation}\label{eq:cod_2}
        {\rm codim}(V_G) \,\,\,=\,\,\, \min_{J \in \mathcal{P}_{\ell}} \big\{ |G_{J}|+4  s_J   \big\} \, . \qquad
    \end{equation}
\end{theorem}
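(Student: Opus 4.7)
The plan is to exploit the union decomposition (\ref{eq:union_J}) together with Theorem~\ref{thm:codim_XG} to obtain (\ref{eq:cod}), and then to deduce (\ref{eq:cod_2}) from (\ref{eq:cod}) via the Geiringer-Laman characterization.

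For (\ref{eq:cod}), I start from $V_G = \bigcup_{J \in \mathcal{P}_{\ell}} \tilde{X}_{G_J}$ as recorded in (\ref{eq:union_J}). Each $\tilde{X}_{G_J}$ is nonempty, as it contains for instance the locus where all lines within each block $J_i$ coincide and the resulting $\ell - s_J$ distinct lines realize the incidences of $G_J$ (e.g.\ the diagonal image of $V_{[\ell - s_J]}$). Taking codimensions of this finite union yields
\[
    {\rm codim}(V_G) \,=\, \min_{J \in \mathcal{P}_{\ell}} {\rm codim}(\tilde{X}_{G_J}).
\]
Using the relation ${\rm codim}(\tilde{X}_{G_J}) = {\rm codim}(X_{G_J}) + 4 s_J$ noted just before the theorem statement, and then invoking Theorem~\ref{thm:codim_XG} to replace ${\rm codim}(X_{G_J})$ by ${\rm rank}(G_J)$, I obtain (\ref{eq:cod}).

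For (\ref{eq:cod_2}), the inequality $\min_J\{|G_J| + 4 s_J\} \geq \min_J\{{\rm rank}(G_J) + 4 s_J\}$ is immediate from the trivial matroid bound ${\rm rank}(H) \leq |H|$. The reverse inequality reduces, via (\ref{eq:cod}), to the following claim: when $G$ is $(2,3)$-sparse, there exists a partition $J^\star \in \mathcal{P}_{\ell}$ attaining the minimum in (\ref{eq:cod}) for which $G_{J^\star}$ is independent in $\mathcal{R}_{\ell - s_{J^\star}}$. Granted this, Geiringer-Laman (Theorem~\ref{thm:Geiringer_Laman}) forces $|G_{J^\star}| = {\rm rank}(G_{J^\star})$, so that $|G_{J^\star}| + 4 s_{J^\star} = {\rm codim}(V_G)$, completing the proof of (\ref{eq:cod_2}).

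The heart of the proof is establishing that last claim, and this is where $(2,3)$-sparsity of $G$ must enter essentially (contractions of $(2,3)$-sparse graphs are themselves typically not $(2,3)$-sparse, so the point cannot be reduced to a purely combinatorial closure property). My approach is an exchange argument over $\mathcal{P}_\ell$: among all minimizers of (\ref{eq:cod}) select one with smallest $s_{J^\star}$ (tiebroken by largest $|G_{J^\star}|$). If $G_{J^\star}$ were not independent, Geiringer-Laman would supply a subset $T \subseteq V(G_{J^\star})$ hosting a rigidity circuit. I would then modify $J^\star$ either by merging the preimages $\pi^{-1}(T) \subseteq [\ell]$ into a single block of a new partition $J^{\star\star}$, or by detaching a single vertex from a block lying over $T$, and argue that the modification is still a minimizer of (\ref{eq:cod}) but strictly reduces the chosen measure of non-independence. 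The delicate bookkeeping, which I expect to be the main obstacle, is that merging a $k$-vertex rigidity circuit in $G_{J^\star}$ turns $2k-2$ edges into loops while inflating $4 s_J$ by $4(k-1)$, leaving a naive net gain of $2k-2$; closing this exchange requires using $(2,3)$-sparsity of the original $G$ at the level of $\pi^{-1}(T) \subseteq [\ell]$ to ensure that the corank of $G_{J^{\star\star}}$ shrinks in lockstep with this deficit. Iterating the exchange yields a minimizer whose contracted graph is independent, and the formula (\ref{eq:cod_2}) follows.
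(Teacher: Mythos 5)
Your derivation of \eqref{eq:cod} is correct and identical to the paper's: combine the decomposition \eqref{eq:union_J} with ${\rm codim}(\tilde{X}_{G_J}) = {\rm codim}(X_{G_J}) + 4s_J$ and Theorem~\ref{thm:codim_XG}. The easy inequality $\min_J\{|G_J|+4s_J\} \geq \min_J\{{\rm rank}(G_J)+4s_J\}$ is also fine.

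For the reverse inequality in \eqref{eq:cod_2}, however, you have a genuine gap, and you acknowledge it yourself. Your plan is to locate a minimizing partition $J^\star$ for which $G_{J^\star}$ is independent, via an exchange argument. But as you note, merging a $k$-vertex rigidity circuit of $G_{J^\star}$ into a single block \emph{increases} the quantity $|G_J|+4s_J$ by $2(k-1)$, so this move goes in the wrong direction, and the alternative ``detaching a vertex'' move is not developed at all. The phrase ``closing this exchange requires using $(2,3)$-sparsity \dots\ to ensure that the corank of $G_{J^{\star\star}}$ shrinks in lockstep'' is a hope, not an argument; without it you have not proved the claim. There is no evidence that a minimizer chosen by your tiebreak rule actually admits a corrective move, and it is not clear the iteration terminates or stays within the set of minimizers.

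The paper sidesteps the need to select a distinguished minimizer. It proves a stronger, cleaner statement: if $G$ is $(2,3)$-sparse and $J$ satisfies ${\rm rank}(G_J)+4s_J \leq {\rm rank}(G)$, then $G_J$ is itself $(2,3)$-sparse (hence ${\rm rank}(G_J) = |G_J|$ by Geiringer--Laman). This is reduced to a single vertex-identification step: if identifying two vertices of a $(2,3)$-sparse graph drops the rigidity rank by at least $4$, the number of lost edges $\delta$ also satisfies $\delta \geq 4$ (since by submodularity the rank can drop by at most $\delta$), and then $|G'| = |G|-\delta \leq 2(\ell-1)-3-(\delta-2) \leq 2(\ell-1)-3$. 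Applied to all induced subgraphs, this gives $(2,3)$-sparsity of $G_J$. In short: the paper shows \emph{every} partition contributing to the minimum has independent contraction, rather than manufacturing \emph{one} by exchanges. You would do well to abandon the exchange and instead try to close your argument by proving this universal $(2,3)$-sparsity claim; your current writeup does not get there.
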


\begin{proof}
Equation (\ref{eq:cod}) follows by combining
\eqref{eq:union_J} and Theorem~\ref{thm:codim_XG}.
Suppose now that $G$ is $(2,3)$-sparse. We claim that if
$J \in \mathcal{P}_{\ell}$ satisfies ${\rm rank}(G_{J})+4  s_J \leq {\rm rank}(G)$ then $G_J$ is also $(2,3)$-sparse.
For the proof, we first  note that
$G_J$ is obtained from $G$ by iteratively identifying pairs of vertices. Since induced subgraphs of a $(2,3)$-sparse graph are
 (2,3)-sparse, it suffices to show:
  if $G'$ is obtained from $G$ by identifying two vertices then  $G'$ is $(2,3)$-sparse.

Let $\delta:= |G|-|G^{'}|$. Since $G$ is $(2,3)$-sparse, we have
    \begin{equation}\label{eq:2_a_sparse}
        |G^{'}| \,=\, |G| - \delta \,\leq \,2 \ell -3-\delta \,\,=\,\, 2(\ell -1)-3+(2-\delta)  \, .
    \end{equation}
    Next, we record the inequalities
$\,    
    {\rm rank}(G) - \delta\, \leq \,{\rm rank}(G')\, \leq\, {\rm rank}(G) - 4$.
        The first inequality is due to the submodularity of the rank.
        The second holds by our assumption that ${\rm rank}(G_{J})+4  s_J \leq {\rm rank}(G)$.
        Therefore,     $\delta \geq 4$.
          Together with (\ref{eq:2_a_sparse}), this implies that $G'$ is (2,3)-sparse. 

We now establish the identity (\ref{eq:cod_2}).
   Let $J \in \mathcal{P}_{\ell}$ and consider ${\rm rank}(G_{J})+4  s_J$ as in~\eqref{eq:cod}. By 
   the claim we just proved, either ${\rm rank}(G_{J})+4 s_J > {\rm rank}(G) = |G|$, in which case $G_J$ does not contribute to the minimum in~\eqref{eq:cod}, or $G_J$ is $(2,3)$-sparse and hence ${\rm rank}(G_J) = |G_J|$. 
\end{proof}

We now introduce a class of graphs that is important for the two main theorems in this section.
 A graph $G \subseteq\binom{[\ell]}{2}$ is  \textit{contraction-stable} (CS) if, for every partition $J \in \mathcal{P}_{\ell}$, we have
    \begin{equation}\label{eq:balanced}
       |G_{J}|+4  s_J \,\,\geq \,\,|G|  \, .
    \end{equation}
We call $G$  \textit{strictly contraction-stable} (SCS) if the inequality \eqref{eq:balanced} is  strict 
 whenever $J \not = \emptyset$.

\begin{example}
    The bipartite graph $G=K_{2,4}$ is $(2,4)$-sparse and hence independent in $\mathcal{R}_6$.
    But, $G$ is not strictly contraction-stable. Indeed, if $J=\{ \{1,2\}\}$, where $1,2$ are the four-valent vertices,
    then $s_J = 1$ and~\eqref{eq:balanced} becomes an equality. This corresponds to a second component $\tilde{X}_{G_J}$ of $V_{G}$, which is different from $Y_G$. Points in $\tilde{X}_{G_J}$ consist of $4$ lines incident to a double line. 
    This is similar to
    Example~\ref{ex:K25}, however $V_G$ is still a complete intersection. 
\end{example}

Let $K^{'}_{2,r}$ denote the graph obtained from $K_{2,r}$ by adding an edge between the two $r$-valent vertices.
A similar analysis for $K_{2,r}$ and $K_{2,r}^{'}$ for small $r$ yields the following corollary. 

\begin{corollary}\label{cor:2_cont_rig}
    If $G$ is strictly contraction-stable, then $G$ is $K_{2,4}$-free and $K_{2,3}^{'}$-free. If $G$ is contraction-stable, then $G$ is $K_{2,5}$-free and $K_{2,4}^{'}$-free.
\end{corollary}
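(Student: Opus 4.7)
The strategy is to prove each implication by contraposition. For each forbidden subgraph $H$, assume that $G$ contains an induced copy of $H$ on vertices $\{v_1, v_2, u_1, \ldots, u_r\}$, where $v_1, v_2$ are the two hub vertices (of degree $r$ inside $H$). I take the singleton partition $J = \{\{v_1, v_2\}\}$, so that $s_J = 1$ and the condition \eqref{eq:balanced} at this particular $J$ simplifies to $|G| - |G_J| \leq 4$, with strict inequality (hence $\leq 3$) required for SCS. It therefore suffices to lower-bound the edge drop $|G| - |G_J|$ in a way that contradicts the relevant bound.

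The heart of the argument is a direct count of the edge drop when $v_1$ and $v_2$ are merged into a single vertex. Edges of $G$ disjoint from $\{v_1, v_2\}$ persist; for every common neighbor $u$ of $v_1, v_2$ in $G$, the two edges $u v_1, u v_2$ collapse into a single edge in $G_J$; and the edge $v_1 v_2$, if present in $G$, becomes a loop and is deleted. By inclusion–exclusion this gives the exact formula
\begin{equation*}
|G| - |G_J| \,\,=\,\, \bigl|N_G(v_1) \cap N_G(v_2) \setminus \{v_1,v_2\}\bigr| \, + \, \begin{cases} 1 & \text{if } v_1 v_2 \in G, \\ 0 & \text{otherwise.} \end{cases}
\end{equation*}
Applied to each forbidden subgraph, this yields the corollary: an induced $K_{2,5}$ has $v_1v_2 \notin G$ and at least five common neighbors, so drop $\geq 5 > 4$, violating CS; an induced $K'_{2,4}$ has $v_1 v_2 \in G$ and at least four common neighbors, so drop $\geq 1 + 4 = 5$, again violating CS; an induced $K_{2,4}$ gives drop $\geq 4$, violating the SCS threshold $\leq 3$; and an induced $K'_{2,3}$ gives drop $\geq 1 + 3 = 4$, again violating SCS.

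The one thing that requires care is verifying that nothing outside the witnessing copy of $H$ can spoil the bounds. Extra common neighbors of $v_1, v_2$ in $G$ can only enlarge the drop, never shrink it, and edges among the $u_i$'s or elsewhere in $G$ do not appear in the formula at all; so the induced-subgraph hypothesis is exactly the right input. The main obstacle is therefore not conceptual but bookkeeping: matching the four arithmetic outcomes $\{4,5\} \times \{\text{adjacent, non-adjacent}\}$ of the drop formula against the two thresholds ($\leq 4$ for CS, $\leq 3$ for SCS), which is precisely what the two halves of the corollary record.
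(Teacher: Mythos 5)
Your proof is correct and follows essentially the same route as the paper: contract the two hub vertices via $J=\{\{v_1,v_2\}\}$ (so $s_J=1$), count the edge drop, and compare to the CS/SCS thresholds $4s_J$ and $<4s_J$. The paper just gestures at this ("a similar analysis for $K_{2,r}$ and $K'_{2,r}$ for small $r$"), whereas you usefully make explicit the general drop formula and the observation that edges of $G$ outside the witnessing copy can only increase the drop, so the special cases do lift to arbitrary $G$ containing the forbidden induced subgraph.
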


Our first main theorem in this section is the characterization of complete intersections.

\begin{theorem}[Complete Intersection]\label{thm:CI}
The incidence variety 
    $V_G$ is a complete intersection if and only if the graph $G$ is $(2,3)$-sparse and contraction-stable.
\end{theorem}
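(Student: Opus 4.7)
My plan is to show that both directions reduce to the numerical statement $\mathrm{codim}(V_G) = |G|$, which is then analyzed via the codimension formula of Theorem~\ref{thm:codim}. The key preliminary observation is that the ambient variety $\mathrm{Gr}(2,4)^\ell$ is smooth, hence Cohen-Macaulay. Therefore the $|G|$ generators $A_i A_j$ (for $ij \in G$) form a regular sequence in the coordinate ring of $\mathrm{Gr}(2,4)^\ell$ if and only if the ideal they generate has codimension equal to $|G|$. Consequently $V_G$ is a complete intersection in $\mathrm{Gr}(2,4)^\ell$ exactly when $\mathrm{codim}(V_G) = |G|$; note that $\mathrm{codim}(V_G) \leq |G|$ always, since $V_G$ is cut out by $|G|$ equations.

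\textbf{Forward direction.} Assume $V_G$ is a complete intersection. Corollary~\ref{cor:VG_CI_then_G_ind} immediately yields that $G$ is $(2,3)$-sparse, since $V_G$ being a complete intersection forces $G$ to be independent in the rigidity matroid $\mathcal{R}_\ell$. With $(2,3)$-sparsity in hand, the simplified codimension formula \eqref{eq:cod_2} of Theorem~\ref{thm:codim} applies, giving
\begin{equation*}
|G| \;=\; \mathrm{codim}(V_G) \;=\; \min_{J \in \mathcal{P}_\ell} \bigl\{\,|G_J| + 4 s_J\,\bigr\}.
\end{equation*}
The finest partition $J = \emptyset$ achieves the value $|G|$, so the minimum equals $|G|$ precisely when $|G_J| + 4 s_J \geq |G|$ for every $J \in \mathcal{P}_\ell$. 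This is exactly the contraction-stability condition \eqref{eq:balanced}.

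\textbf{Backward direction.} Conversely, assume $G$ is $(2,3)$-sparse and contraction-stable. Since $G$ is $(2,3)$-sparse, formula~\eqref{eq:cod_2} again applies, so
\begin{equation*}
\mathrm{codim}(V_G) \;=\; \min_{J \in \mathcal{P}_\ell} \bigl\{\,|G_J| + 4 s_J\,\bigr\}.
\end{equation*}
By contraction-stability, every summand on the right is at least $|G|$, and the value $|G|$ is attained by $J = \emptyset$. Hence $\mathrm{codim}(V_G) = |G|$, so by the preliminary observation $V_G$ is a complete intersection.

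The argument is essentially a direct unpacking of Theorem~\ref{thm:codim} and Corollary~\ref{cor:VG_CI_then_G_ind}, so there is no real obstacle at this stage; all of the substantive work has already been done. The only point requiring care is the equivalence between the complete-intersection property and the numerical condition $\mathrm{codim}(V_G) = |G|$, which relies on the Cohen-Macaulayness of $\mathrm{Gr}(2,4)^\ell$ and the standard fact that in a Cohen-Macaulay ring a sequence of $k$ elements cutting out a codimension-$k$ ideal is automatically regular.
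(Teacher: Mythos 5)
Your proof is correct and takes essentially the same route as the paper's: both directions reduce to the numerical criterion $\mathrm{codim}(V_G) = |G|$, which is analyzed via Corollary~\ref{cor:VG_CI_then_G_ind} and the codimension formula~\eqref{eq:cod_2} from Theorem~\ref{thm:codim}. Your Cohen--Macaulay preamble simply makes explicit the standard equivalence that the paper treats as the working definition of complete intersection.
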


\begin{proof} 
    By Theorem~\ref{thm:codim}, if the graph $G$ is both $(2,3)$-sparse and contraction-stable then ${\rm codim}(V_G) = {\rm rank}(G) = |G| $. By Corollary~\ref{cor:VG_CI_then_G_ind}, if $G$ is not $(2,3)$-sparse then $V_G$ is not a complete intersection. Assume that $G$ is $(2,3)$-sparse but not contraction-stable. By~Theorem~\ref{thm:codim}, ${\rm codim}(V_G) < {\rm rank}(G) = |G|$
    and so $V_G$ cannot be a complete intersection.
\end{proof}

\begin{corollary}\label{cor:add_edge_drop_dim}
    Let $G$ be $(2,4)$-sparse and strictly contraction-stable.
     Let $e \in \binom{[\ell]}{2} \backslash G$ be a non-edge of $G$ and 
     denote by $\tilde{G} := G \cup \{e\}$. Then, ${\rm codim}(V_{\tilde{G}}) = {\rm codim}(V_G) +  1$.
\end{corollary}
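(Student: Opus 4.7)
The plan is to apply the codimension formula \eqref{eq:cod} of Theorem~\ref{thm:codim} to the augmented graph $\tilde G$, and to show that the minimum over $J \in \mathcal{P}_\ell$ is still attained at $J = \emptyset$ with value $|\tilde G| = |G|+1$. Under the hypotheses, $G$ is $(2,3)$-sparse (being $(2,4)$-sparse) and contraction-stable, so Theorem~\ref{thm:CI} gives $\mathrm{codim}(V_G) = |G|$. The corollary therefore reduces to the single equality $\mathrm{codim}(V_{\tilde G}) = |G|+1$.

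First I would handle the $J = \emptyset$ term. For any $S \subseteq [\ell]$, the $(2,4)$-sparsity of $G$ gives $|\tilde G[S]| \leq |G[S]|+1 \leq 2|S|-3$, so $\tilde G$ is $(2,3)$-sparse. By the Geiringer-Laman Theorem~\ref{thm:Geiringer_Laman}, $\tilde G$ is independent in $\mathcal{R}_\ell$, hence $\mathrm{rank}(\tilde G) = |\tilde G| = |G|+1$. This furnishes the upper bound $|G|+1$ for the minimum in \eqref{eq:cod}.

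Next I would show that no nonempty partition can improve on this. Fix $J \in \mathcal{P}_\ell \setminus \{\emptyset\}$. The contracted graph $\tilde G_J$ either coincides with $G_J$ (when $J$ merges the two endpoints of $e$, so that $e$ collapses to a loop and is deleted) or is obtained from $G_J$ by inserting a single edge. In either case $\mathrm{rank}(\tilde G_J) \geq \mathrm{rank}(G_J)$, so it suffices to prove $\mathrm{rank}(G_J) + 4 s_J \geq |G|+1$. Suppose instead $\mathrm{rank}(G_J) + 4 s_J \leq |G| = \mathrm{rank}(G)$. The auxiliary claim established inside the proof of Theorem~\ref{thm:codim} then forces $G_J$ to be $(2,3)$-sparse, giving $\mathrm{rank}(G_J) = |G_J|$ and hence $|G_J| + 4 s_J \leq |G|$, which contradicts the strict contraction-stability of $G$. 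Combining the two bounds yields $\mathrm{codim}(V_{\tilde G}) = |G|+1$, the desired conclusion.

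The main obstacle is the interplay between edge-count and rigidity-rank estimates: the $(2,4)$-sparsity hypothesis is precisely what guarantees that adding one edge preserves independence in the rigidity matroid at $J = \emptyset$ (otherwise $\tilde G$ could contain a rigidity circuit and drop rank), while the remaining partitions are tamed by parlaying strict contraction-stability through the rank-to-edge-count equivalence for $(2,3)$-sparse contractions already extracted in the proof of Theorem~\ref{thm:codim}.
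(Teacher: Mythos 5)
Your proposal is correct and follows essentially the same route as the paper, via Theorem~\ref{thm:codim} / Theorem~\ref{thm:CI} and sparsity arguments. The paper's proof is a three-line statement that simply asserts ``$\tilde G$ is contraction-stable'' without further justification, whereas you supply that missing detail: you verify that for every nonempty $J$, the term ${\rm rank}(\tilde G_J) + 4 s_J$ cannot drop below $|G|+1$, by combining the monotonicity ${\rm rank}(\tilde G_J) \geq {\rm rank}(G_J)$ with the auxiliary claim inside the proof of Theorem~\ref{thm:codim} (that a partition achieving ${\rm rank}(G_J) + 4 s_J \leq {\rm rank}(G)$ would force $G_J$ to be $(2,3)$-sparse) and the strict contraction-stability of $G$. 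One small imprecision: for nonempty $J$, $\tilde G_J$ need not differ from $G_J$ by exactly one new edge (the collapsed $e$ might become a parallel copy of an already-present edge rather than a loop), but the containment $\tilde G_J \supseteq G_J$ and hence the rank inequality you actually use both still hold, so nothing is affected.
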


\begin{proof}
    Note that $\tilde{G}$ is contraction-stable. Since $G$ is $(2,4)$-sparse, $\tilde{G}$ is $(2,3)$-sparse 
    and hence independent. Then, by Theorem~\ref{thm:CI}, 
    ${\rm codim}(V_{\tilde{G}}) = |\tilde{G}| = |G| + 1 = {\rm codim}(V_{G}) + 1$.
\end{proof}

We now move towards understanding irreducibility. We start with a necessary condition.

\begin{proposition}\label{prop:bases_red}
    If $G$ is not $(2,4)$-sparse, then $V_G$ is reducible. 
\end{proposition}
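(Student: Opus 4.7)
The plan is to reduce the reducibility of $V_G$ to that of a smaller sub-incidence variety $V_H$ attached to a carefully chosen induced subgraph of $G$, and then to transport this reducibility back to $V_G$ via a coordinate projection. First, Remark~\ref{rem:sparsity} turns the hypothesis ``$G$ is not $(2,4)$-sparse'' into an induced rigidity basis $H:=G[S]$ on a vertex subset $S\subseteq[\ell]$ with $|S|\geq 3$; so $|H|=2|S|-3$.

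Next I would argue that $V_{[S]}$ and $V_{[S]}^*$ are two distinct irreducible components of $V_H$. A direct count gives $\dim V_{[S]} = 2|S|+3$, which matches $\dim X_H = 4|S|-{\rm rank}(H) = 2|S|+3$ by Theorem~\ref{thm:codim_XG}. Since $V_{[S]} \subseteq X_H$ (generic concurrent configurations have distinct lines), this forces $V_{[S]}$ to be an irreducible component of $X_H$, and likewise for $V_{[S]}^*$. To promote $V_{[S]}$ from a component of $X_H$ to a component of $V_H$, suppose an irreducible subvariety $W \subseteq V_H$ satisfied $V_{[S]} \subsetneq W$. By the decomposition in \eqref{eq:union_J}, $W$ would lie inside a single $\tilde{X}_{H_J}$; since $V_{[S]} \subseteq W$ has generic configurations with all lines distinct, this would force $J = \emptyset$ and hence $W \subseteq X_H$, contradicting maximality of $V_{[S]}$ as a component of $X_H$. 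The same reasoning handles $V_{[S]}^*$, and for $|S| \geq 3$ these two components are distinct because concurrency and coplanarity differ.

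I would then lift this reducibility to $V_G$ via the coordinate projection $\pi \colon V_G \to V_H$ that forgets the lines indexed by $[\ell] \setminus S$, a well-defined morphism since $G[S] = H$. For every graph $G$ one has $V_{[\ell]}, V_{[\ell]}^* \subseteq V_G$, and $\pi$ carries them surjectively onto $V_{[S]}$ and $V_{[S]}^*$ respectively, because truncating a concurrent or coplanar $\ell$-tuple preserves the defining property. If $V_G$ were irreducible, then $\overline{\pi(V_G)}$ would be an irreducible closed subvariety of $V_H$ and hence contained in a single irreducible component of $V_H$; but it would contain the two distinct components $V_{[S]}$ and $V_{[S]}^*$, a contradiction. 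Therefore $V_G$ is reducible.

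I expect the main technical hurdle to be the promotion step in paragraph two: even when $V_H$ is not equidimensional---which does occur for rigidity bases that are not contraction-stable---one must rule out $V_{[S]}$ being strictly contained in a larger component. The argument above sidesteps equidimensionality by exploiting that $V_{[S]}$ has generic configurations with all lines distinct, so none of the strata $\tilde{X}_{H_J}$ with $J \neq \emptyset$ can absorb it.
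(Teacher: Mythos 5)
Your proof is correct, and its skeleton is the same as the paper's: extract a rigidity basis $H$ inside $G$ (the paper takes any sub-basis, you take an induced one via Remark~\ref{rem:sparsity}), show that the concurrent and coplanar varieties $V_{[S]}$ and $V_{[S]}^*$ are two distinct irreducible components of $V_H$, and then use the projection $\pi\colon V_G\to V_H$ together with $V_{[\ell]},V_{[\ell]}^*\subseteq V_G$ to contradict irreducibility of $V_G$. Where you genuinely differ is the component step. The paper asserts ${\rm codim}(V_H)=|H|=2k-3$ (citing Corollary~\ref{cor:VG_CI_then_G_ind}), so that the $(2k+3)$-dimensional varieties $V_{[k]},V_{[k]}^*$ are automatically top-dimensional in $V_H$; you instead prove they are components of $X_H$ via the exact equality of Theorem~\ref{thm:codim_XG}, and then promote them to components of $V_H$ using the closed stratification \eqref{eq:union_J} and the fact that generic concurrent (or coplanar) tuples consist of distinct lines. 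Your route is more robust: the equality ${\rm codim}(V_H)=|H|$ is not automatic for an arbitrary basis $H$ --- for $H=K_{2,4}^{'}$, contracting the two $4$-valent vertices gives ${\rm rank}(H_J)+4s_J=8<9=|H|$, so by Theorem~\ref{thm:codim} the variety $V_H$ has a component of dimension exceeding $2k+3$ --- hence the paper's one-line codimension claim implicitly requires either a good choice of $H$ or exactly the stratification argument you supply, while your version (as you note in your last paragraph) handles non-contraction-stable bases uniformly. One small touch-up: your justification that $\pi$ maps $V_{[\ell]}$ \emph{onto} $V_{[S]}$ (``truncation preserves the property'') only gives $\pi(V_{[\ell]})\subseteq V_{[S]}$; surjectivity follows by extending any concurrent $S$-tuple by lines through the common point, which is precisely the explicit lift (repeating the line $A_1'$) used in the paper.
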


\begin{proof}
Suppose $G$ is not $(2,4)$-sparse. Then $G$ has a subgraph $H $ which is a basis
of $\mathcal{R}_k$ for some $k \geq 3$. The projection $\pi: V_G \to V_H$ takes the $\ell $ lines to the
 lines in $[k]$. We will show that the Zariski closure $V$
of the image $\pi(V_G)$ is reducible.
This implies that $V_G$ is reducible.

By Corollary~\ref{cor:VG_CI_then_G_ind} and the fact that $H$ is a basis, we have
${\rm codim}(V_H) = |H|=2k-3$. Hence, both $V_{[k]}$ and $V_{[k]}^\ast$ are irreducible components of $V_H$. We claim that $V_{[k]}$ and $V_{[k]}^*$ are also irreducible components of $V$. Since $V \subseteq V_H$, it suffices to show that $V_{[k]}$ and $V_{[k]}^*$ are subsets of $V$. Let $A' = (A_1', \ldots, A_k') \in V_{[k]}$. We lift the configuration $A'$ to a point $A$ in ${\rm Gr}(2,4)^\ell$ as follows. The $i$-th line $A_i$ is taken to be $A_i'$ if $i \in [k]$, else it is just $A_1'$. 
This definition ensures   $A \in V_{K_\ell} \subseteq V_G$, and  we have $\pi(A) = A'$ by construction.
This shows that $V_{[k]} \subseteq V$. An analogous argument proves the other inclusion $V_{[k]}^\ast \subseteq V$.    
\end{proof}

\begin{proposition}\label{prop:YG_24_sparse}
    Let $G \subseteq\binom{[\ell]}{2}$ be a $(2,4)$-sparse graph. Then, $Y_G$ is unirational and
    \begin{equation}
        {\rm codim}(Y_G) \,\,=\,\, {\rm rank}(G) \,\,=\,\, |G| \, .
    \end{equation}
\end{proposition}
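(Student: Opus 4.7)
My plan is to combine Proposition~\ref{prop:codim_WG} for the lower bound with an inductive construction that exhibits an explicit rational parametrization of $Y_G$. Since $G$ is $(2,4)$-sparse it is in particular $(2,3)$-sparse, so by Theorem~\ref{thm:Geiringer_Laman} it is independent in $\mathcal{R}_\ell$ and ${\rm rank}(G)=|G|$; Proposition~\ref{prop:codim_WG} then supplies the lower bound ${\rm codim}(Y_G)\geq {\rm rank}(G)=|G|$ at once.

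For the matching upper bound together with unirationality, I would induct on $\ell$, the base case $\ell=1$ being trivial as $Y_G={\rm Gr}(2,4)$. For the inductive step, $(2,4)$-sparsity allows me to locate a vertex $v\in[\ell]$ of degree $k:=\deg_G(v)\leq 3$: if every vertex had degree at least four then $|G|\geq 2\ell>2\ell-4$, contradicting $(2,4)$-sparsity. Moreover, applying $(2,4)$-sparsity to the set $\{v\}\cup N(v)$ forces the neighbourhood $N(v)$ to carry at most $k-2$ internal edges, which prevents the neighbours from forming any rigidity basis. The induced subgraph $G':=G\setminus v$ is again $(2,4)$-sparse, so by induction $Y_{G'}$ is unirational of codimension $|G'|$.

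The next step would be to analyse the projection $\pi\colon {\rm Gr}(2,4)^\ell\to {\rm Gr}(2,4)^{\ell-1}$ forgetting the line at $v$ and to show that $\pi|_{Y_G}$ is dominant over $Y_{G'}$ with generic fibre a dense open subset of the Schubert variety $\Sigma:=\bigcap_{j\sim v}\{A_v\in{\rm Gr}(2,4):A_vA_j=0\}$. For $k\leq 3$ and generic choices of the $k$ neighbour lines, $\Sigma$ is irreducible and rational of dimension $4-k$: a Schubert divisor for $k=1$, a two-dimensional Schubert surface for $k=2$, and one ruling of the unique smooth quadric through three skew lines for $k=3$. Each admits an explicit rational parametrization varying rationally with the neighbour lines, and the non-incidence conditions $A_vA_j\neq 0$ for $j\not\sim v$ remove only a proper closed subset of $\Sigma$. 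Adding fibre dimensions then gives $\dim Y_G=\dim Y_{G'}+(4-k)$, hence ${\rm codim}(Y_G)=|G'|+k=|G|$, and composing any dominant rational map $\mathbb{P}^{N'}\dashrightarrow Y_{G'}$ supplied by induction with the fibrewise Schubert parametrization produces a dominant rational map $\mathbb{P}^{N'+4-k}\dashrightarrow Y_G$, witnessing unirationality.

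The step I expect to require the most care is confirming that at a generic point of $Y_{G'}$ the neighbour lines of $v$ are generic enough to force $\Sigma$ to be irreducible of the expected dimension and to meet the open locus where the extra Schubert divisors $\{A_vA_j=0\}$ for $j\not\sim v$ are avoided. This is precisely what $(2,4)$-sparsity buys: it rules out the $K_3$-type subgraphs in $N(v)$ that would collapse $\Sigma$ into a reducible Schubert configuration (compare the reducibility of $Y_{K_3}$, which is $(2,3)$-sparse but not $(2,4)$-sparse), and, via the inductive statement for $G'$, also rules out forced coincidences between the neighbour lines and the non-neighbours of $v$.
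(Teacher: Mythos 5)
Your overall architecture matches the paper's: induct on $\ell$, use Remark~\ref{rem:sparsity} to find a vertex $v$ of degree $k\le 3$, delete it to form $G'$, and analyze the projection $\pi$ forgetting the line at $v$. Both proofs observe that $(2,4)$-sparsity implies triangle-freeness, so the $k$ neighbour lines of $v$ are pairwise skew at a generic point of $Y_{G'}$, making the generic fiber $\Sigma$ an irreducible rational Schubert variety of dimension $4-k$; composing parametrizations then gives unirationality and the codimension count $|G'|+k=|G|$.

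The one place your argument stops short is exactly the step you flag as delicate: justifying that $\Sigma$ is not contained in any extra Schubert divisor $\{A_vA_j=0\}$ with $j\not\sim v$. You appeal to ``the inductive statement for $G'$'' to rule out forced coincidences, but the inductive hypothesis only controls incidences among the lines other than $A_v$; it does not by itself exclude the $k=3$ scenario in which some non-neighbour line $A_j$ happens to lie (for all generic points of $Y_{G'}$) on the quadric $Q$ through the three neighbour lines, in the ruling opposite to $\Sigma$, which would force $\Sigma\subseteq\{A_vA_j=0\}$. Ruling this out directly would require a case analysis. The paper sidesteps the issue: it first constructs $Y$ merely as a unirational irreducible component of $V_G$ (not yet identified with $Y_G$) of codimension $|G'|+k=|G|$ dominating $Y_{G'}$, and only then shows $Y=Y_G$ by a dimension count. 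If generic points of $Y$ satisfied any extra incidence, then $Y\subseteq X_{\tilde G}$ for some $\tilde G\supsetneq G$; since $G$ is $(2,4)$-sparse, $G\cup\{e\}$ is still independent for any non-edge $e$, so ${\rm rank}(\tilde G)>{\rm rank}(G)$, and Theorem~\ref{thm:codim_XG} gives ${\rm codim}(Y)\ge{\rm codim}(X_{\tilde G})={\rm rank}(\tilde G)>{\rm rank}(G)={\rm codim}(Y)$, a contradiction. You should replace your informal genericity claim with this dimension argument to close the gap.
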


\begin{proof}
    Like Proposition~\ref{prop:codim_WG}, we prove this by induction on $\ell \geq 1$. The case $\ell = 1$ is clear. 
    Let $\ell \geq 2$. By Remark~\ref{rem:sparsity}, $G$ has a vertex $v$ of degree $r \in \{0,1,2,3\}$.
    Let     $G^{'}$ be the graph obtained from $G$ by deleting $v$. 
        Since $G^{'}$ is $(2,4)$-sparse, by the induction hypothesis, $Y_{G^{'}} $ is unirational 
        and ${\rm codim}(Y_{G^{'}}) = {\rm rank}(G^{'}) = |G^{'}|$. We consider the projection $\pi: V_G \rightarrow V_{G^{'}}$.    

    Since $G$ is $(2,4)$-sparse, it is triangle-free.
    Generic points in $Y_{G^{'}}$ are distinct lines satisfying the strict incidences of $G$. Hence, the fiber of $\pi$ over a generic point in $Y_{G^{'}}$ is an intersection of $r $ Schubert divisors in ${\rm Gr}(2,4)$ associated to non-intersecting lines. Since $r \leq 3$,     the generic fiber of $\pi$ is a variety of codimension $r$  in ${\rm Gr}(2,4)$ which is unirational, hence irreducible.
         Therefore, there exists (by composing parametrizations) a unirational component $Y $ of $V_G$,
         having codimension equal to $|G^{'}|+r = |G|={\rm rank}(G)$, which maps onto $Y_{G^{'}}$ via $\pi$.
     
      We claim that $Y=Y_G$. First of all,
      generic points in $Y$ correspond to distinct lines, i.e.~$Y \subseteq X_G$. 
      We now argue that generic points on $Y$ correspond 
      to strict realizations of $G$, i.e.~$Y \subseteq Y_G$. If not, then
      there exists a graph $\tilde{G} \subseteq \binom{[\ell]}{2} $, obtained 
      from $G$ by adding at least one non-edge of $G$, 
      such that $Y \subseteq X_{\tilde{G}}$. But then by Theorem~\ref{thm:codim_XG},
       we would have ${\rm codim}(Y) \geq {\rm codim}(X_{\tilde{G} }) = {\rm rank}(\tilde{G}) > {\rm rank}(G) $, 
       which contradicts the previous paragraph.      
        Lastly, the Zariski open set of points in $Y_G$ which have exactly the incidences prescribed by $G$ are in the image of the map which parametrizes $Y$. We conclude that $Y = Y_G$. 
\end{proof}

We next present our second main result in this section.

\begin{theorem}[Irreducibility]\label{thm:irred}
The incidence variety
    $V_G$ is irreducible if and only if $G$ is $(2,4)$-sparse and strictly contraction-stable. 
    In this case, we have $V_G =W_G=  X_G   = Y_G $,
    and this irreducible variety is a unirational complete intersection.
\end{theorem}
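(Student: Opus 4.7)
The plan is to prove both directions by combining the decomposition $V_G = \bigcup_{J \in \mathcal{P}_\ell} \tilde{X}_{G_J}$ from \eqref{eq:union_J} with the codimension formula of Theorem~\ref{thm:codim}, the complete-intersection characterization of Theorem~\ref{thm:CI}, and the unirationality of $Y_G$ from Proposition~\ref{prop:YG_24_sparse}. The equalities $V_G = W_G = X_G = Y_G$ and the unirationality clause will fall out as formal consequences of the identity $V_G = Y_G$.

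For the sufficiency direction, suppose $G$ is $(2,4)$-sparse (hence also $(2,3)$-sparse) and strictly contraction-stable. Theorem~\ref{thm:CI} then says that $V_G$ is a complete intersection of codimension $|G|$; in particular $V_G$ is Cohen--Macaulay, hence equidimensional of codimension $|G|$. Proposition~\ref{prop:YG_24_sparse} produces the unirational (hence irreducible) subvariety $Y_G \subseteq V_G$ of codimension exactly $|G|$, which is therefore an irreducible component of $V_G$. I will argue that it is the only one by bounding the codimension of each piece $\tilde{X}_{G_J}$. For $J = \emptyset$, the identity $X_G = \overline{\bigcup_E Y_{G \cup E}}$ used in the proof of Theorem~\ref{thm:codim_XG} yields ${\rm codim}(Y_{G \cup E}) > |G|$ for every non-empty set $E$ of non-edges of $G$, since $G$ is independent in $\mathcal{R}_\ell$ and adding any edge strictly increases its rank; consequently $Y_G$ is the unique codim-$|G|$ irreducible piece of $X_G$. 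For $J \neq \emptyset$, I will invoke the auxiliary claim proved inside Theorem~\ref{thm:codim}: if ${\rm rank}(G_J) + 4 s_J \leq |G|$ then $G_J$ must already be $(2,3)$-sparse, so ${\rm rank}(G_J) = |G_J|$; combined with SCS, which gives $|G_J| + 4 s_J > |G|$, this forces ${\rm codim}(\tilde{X}_{G_J}) = {\rm rank}(G_J) + 4 s_J > |G|$. Equidimensionality of $V_G$ then gives $V_G = Y_G$.

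For the necessity direction, I contrapositively show that failure of either condition yields a reducible $V_G$. If $G$ is not $(2,4)$-sparse, this is immediate from Proposition~\ref{prop:bases_red}. If $G$ is $(2,4)$-sparse but not SCS, then the minimum in \eqref{eq:cod_2} is attained at some $J^\ast \neq \emptyset$, and the auxiliary claim again makes $G_{J^\ast}$ $(2,3)$-sparse, so ${\rm codim}(\tilde{X}_{G_{J^\ast}}) = |G_{J^\ast}| + 4 s_{J^\ast} = {\rm codim}(V_G)$. Because $J^\ast \neq \emptyset$, every point of $\tilde{X}_{G_{J^\ast}}$ has two coinciding lines, whereas the non-empty $Y_G$ from Proposition~\ref{prop:YG_24_sparse} supplies points of $V_G$ with pairwise distinct lines; hence $\tilde{X}_{G_{J^\ast}} \subsetneq V_G$. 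Choosing a top-dimensional irreducible component of $\tilde{X}_{G_{J^\ast}}$ produces a proper irreducible closed subvariety of $V_G$ of the same dimension as $V_G$, contradicting the irreducibility of $V_G$.

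Once $V_G = Y_G$ is established, the sandwich $Y_G \subseteq W_G \cap X_G \subseteq V_G = Y_G$ forces $V_G = W_G = X_G = Y_G$, and unirationality is immediate from Proposition~\ref{prop:YG_24_sparse}. I expect the main obstacle to be the case analysis for $J \neq \emptyset$: strict contraction-stability is phrased in terms of edge counts, but the codimension of $\tilde{X}_{G_J}$ is governed by matroid rank, so bridging the two crucially reuses the $(2,3)$-sparsity claim inside the proof of Theorem~\ref{thm:codim}. Everything else is essentially bookkeeping with the decomposition \eqref{eq:union_J} and equidimensionality of complete intersections.
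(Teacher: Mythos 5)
Your proof is correct and follows essentially the same strategy as the paper's: both directions rest on Theorem~\ref{thm:CI} (complete intersection, hence equidimensional of codimension $|G|$), Proposition~\ref{prop:YG_24_sparse} (unirationality and codimension of $Y_G$), Proposition~\ref{prop:bases_red} for the non-$(2,4)$-sparse case, and the observation that coinciding lines separate $\tilde{X}_{G_{J^\ast}}$ from $Y_G$. The one organizational difference is in the sufficiency direction: you bound the codimension of every piece $\tilde{X}_{G_J}$ directly through \eqref{eq:union_J} together with the $(2,3)$-sparsity claim inside Theorem~\ref{thm:codim}, whereas the paper handles any component with extra incidences or coinciding lines at one stroke via Corollary~\ref{cor:add_edge_drop_dim}; both routes arrive at the same contradiction with equidimensionality.
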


\begin{proof}
     If the graph $G$ is not $(2,4)$-sparse, then $V_G$ is reducible by Proposition~\ref{prop:bases_red}.
        Assume that $G$ is $(2,4)$-sparse but not SCS. By Proposition~\ref{prop:YG_24_sparse}, the variety
        $Y_G$ is unirational and ${\rm codim}(Y_G) = |G|$. Since $G$ is not SCS, 
        there exists a partition $J \in \mathcal{P}_{\ell}$ and an irreducible component $\tilde{V} \subseteq \tilde{X}_{G_J}$, such that ${\rm codim}(\tilde{V}) = {\rm rank}(G_J) = |G_J| \leq |G|-4s_J = {\rm codim}(Y_G) -4s_J$. Generic points in $\tilde{V}$ have lines which coincide precisely according to~$J$, and 
        we have ${\rm codim}(\tilde{V}) \leq {\rm codim}(Y_G)$. Thus $\tilde{V} \not \subset Y_G$ and so $V_G$ must be reducible.

    Conversely, if $G$ is $(2,4)$-sparse and SCS, then $V_G$ is a complete intersection
    by Theorem~\ref{thm:CI}. Let $V \subseteq V_G$ be any irreducible component. If generic points in $V$ are distinct lines   with precisely the incidences of $G$, then $V \cap Y_G$ is Zariski open in $Y_G$, so $V=Y_G$. If not, then $V \subseteq V_{\tilde{G}}$, where $\tilde{G}$ is obtained from $G$ by adding at least one non-edge. By Corollary~\ref{cor:add_edge_drop_dim}, $V$ has positive codimension in $V_G$. This is a contradiction to the complete intersection $V_G$ being
         equidimensional.
          We conclude that $V=V_G = Y_G$ is a unirational complete intersection.
\end{proof}

The previous result allows us to give an easy-to-use sufficient condition for irreducibility.

\begin{corollary}
Let $G$ be a $(2,4)$-sparse graph with at most two independent cycles. Then $G$ is strictly contraction-stable,
and hence the incidence variety $V_G$ is irreducible.
\end{corollary}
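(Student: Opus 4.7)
The plan is to verify the strict contraction-stability condition directly via cycle-rank bookkeeping, and then invoke Theorem~\ref{thm:irred}. Fix a nonempty partition $J \in \mathcal{P}_\ell$ and set $s := s_J \geq 1$. The goal is to show $|G_J| + 4s > |G|$.

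First I would form $G_J$ from $G$ in two bookkeeping steps. Identify the vertices within each block $J_i$, producing an auxiliary multigraph $\widetilde{G}_J$ on $\ell - s$ vertices that still carries all $|G|$ edges: some as loops (from edges inside blocks) and some as parallel edges (from cross-block edges with several representatives). Then delete the $L$ loops and collapse the $P$ excess parallel edges to obtain the simple graph $G_J$. By construction $|G| - |G_J| = L + P$.

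The estimate now comes from cycle counting. Since $G$ is connected and contractions preserve connectedness, both $G$ and $G_J$ are connected, so Euler's formula gives $\dim H_1(G) = |G| - \ell + 1$ and $\dim H_1(G_J) = |G_J| - (\ell - s) + 1$. Subtracting and rearranging,
\[
|G| - |G_J| \,\,=\,\, s \,+\, \dim H_1(G) \,-\, \dim H_1(G_J).
\]
Since $\dim H_1(G_J) \geq 0$ and, by hypothesis, $\dim H_1(G) \leq 2$, we conclude $|G| - |G_J| \leq s + 2$. For every $s \geq 1$ one has $s + 2 < 4s$, so the strict contraction-stability inequality $|G_J| + 4 s_J > |G|$ holds. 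Combined with the $(2,4)$-sparse hypothesis, Theorem~\ref{thm:irred} then yields that $V_G$ is irreducible (indeed a unirational complete intersection with $V_G = W_G = X_G = Y_G$).

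The only delicate point is checking that the contraction bookkeeping aligns with the cycle-rank identity, so that the equality $|G| - |G_J| = L + P$ matches the discrepancy $s + \dim H_1(G) - \dim H_1(G_J)$. Once this is in place, the arithmetic $s + 2 < 4s$ for $s \geq 1$ is immediate, and no further obstacle appears.
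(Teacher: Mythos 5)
Your proof is correct and is essentially the paper's argument in different clothing: the paper combines $|G|\le \ell+1$ (at most two independent cycles) with connectivity of $G_J$, giving $|G_J|\ge \ell-s_J-1$ and hence $3s_J\le 2$ for any violating partition, which is exactly your Euler-formula bound $|G|-|G_J|\le s_J+2<4s_J$ stated contrapositively. No gap; the loop/parallel-edge bookkeeping is harmless but unnecessary once the cycle-rank identity is written down.
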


\begin{proof}
The cycles hypothesis implies
     $|G| \leq \ell+ 1 $. Suppose there exists $J \in \mathcal{P}_{\ell}$ which 
     violates the strict inequality in~\eqref{eq:balanced}. Since $G_J$ is connected, 
     we have $|G_J| \geq \ell - s_J -1$. Combining the two inequalities, we obtain $3s_J \leq 2$, which implies $s_J = 0$. Therefore, $G$ is SCS.
     \end{proof}

\begin{remark}
    An analogous argument proves the following extension: If the graph
        $G$ is $(2,4)$-sparse, $K_{2,4}$-free, and has at most 
        five independent cycles, then $V_G$ is irreducible.
\end{remark}

The SCS property for small graphs
can be captured by forbidding certain subgraphs. 
The following proposition can easily be proved by exhaustive search. 

\begin{proposition}\label{prop:until_l_9}
Fix $\ell \leq 9$ and let
     $G \subseteq \binom{[\ell]}{2}$ be a $(2,4)$-sparse graph.
    Then $G$ is strictly contraction-stable, and hence $V_G$ irreducible, if and only if $G$ is $K_{2,4}$-free.
\end{proposition}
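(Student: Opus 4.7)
The plan is to prove each direction separately, with the reverse implication being the substantive one.

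\emph{Necessity.} The implication ``$G$ is SCS $\Rightarrow$ $G$ is $K_{2,4}$-free'' is already part of Corollary~\ref{cor:2_cont_rig}, so this direction is immediate.

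\emph{Sufficiency.} Assume $G$ is $(2,4)$-sparse and $K_{2,4}$-free; I aim to derive SCS. Suppose, for contradiction, that some nontrivial $J \in \mathcal{P}_{\ell}$ satisfies $|G_J| + 4 s_J \leq |G|$, and pick $J$ with $s_J$ minimal. Set $c(J) := |G| - |G_J|$, so the violation reads $c(J) \geq 4 s_J$. A key preliminary observation is that every $(2,4)$-sparse graph is triangle-free (a triangle has $3 > 2\cdot 3 - 4$ edges), so adjacent vertices of $G$ share no common neighbor.

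For $s_J = 1$, write $J = \{\{u,v\}\}$. A direct count gives $c(J) = [uv \in G] + |N_G(u) \cap N_G(v)|$. Triangle-freeness forces $c(J) = 1$ when $uv \in G$, so $c(J) \geq 4$ requires $uv \notin G$ and $|N_G(u) \cap N_G(v)| \geq 4$. Four common neighbors together with $u,v$ then exhibit a $K_{2,4}$ subgraph of $G$, contradicting $K_{2,4}$-freeness. (The companion case $uv \in G$ with three common neighbors, which would give $K_{2,3}'$, is ruled out by triangle-freeness without needing $K_{2,3}'$-freeness.)

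For $s_J \geq 2$, the plan is to combine minimality of $s_J$ with the bound $\ell \leq 9$. If $J'$ is any refinement of $J$ obtained by splitting a single part of $J$ into two smaller parts, then $s_{J'} = s_J - 1$, and by minimality $J'$ is non-violating; comparing the two inequalities yields $c(J) - c(J') \geq 5$. Hence the identification step collapsing $J' \to J$ must kill at least five edges of $G_{J'}$. Together with triangle-freeness, the codegree bound $|N_G(u) \cap N_G(v)| \leq 3$ on non-edges (from $K_{2,4}$-freeness), and the global cap $|G| \leq 2\ell - 4 \leq 14$, this produces strong restrictions on how parts of $J$ interact with each other and with the remaining vertices; a finite case analysis then either manufactures a $K_{2,4}$ subgraph of $G$ or violates the vertex budget $\ell \leq 9$.

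The main obstacle is precisely this $s_J \geq 2$ analysis: the structural constraints above are enough in principle, but the hand case analysis is tedious and brittle. The cleanest completion, as suggested by the authors, is an exhaustive computer verification that enumerates all $(2,4)$-sparse, $K_{2,4}$-free connected graphs on $\ell \leq 9$ vertices (a finite and tractable list) and checks the SCS inequality $|G_J| + 4 s_J > |G|$ for every nontrivial $J \in \mathcal{P}_\ell$. The bound $\ell \leq 9$ is essential: for $\ell \geq 10$ further forbidden subgraphs beyond $K_{2,4}$ are needed to characterize SCS.
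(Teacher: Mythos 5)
Your proposal is correct and, in the end, takes the same route as the paper: the paper's entire proof of Proposition~\ref{prop:until_l_9} is the assertion that it follows by exhaustive search over the finitely many $(2,4)$-sparse graphs with $\ell \leq 9$, which is precisely the computer verification you fall back on for $s_J \geq 2$ (necessity being immediate from Corollary~\ref{cor:2_cont_rig} in both treatments). Your hand argument for the $s_J=1$ case is a sound but optional supplement to that search.
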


\begin{example}\label{eg:l_10}
Proposition~\ref{prop:until_l_9} does not hold for $\ell \geq 10$.
The graph $G$ in Figure~\ref{figure:eliaexample} shows that the if-direction fails.
The variety $V_G$ is a complete intersection of degree $2^{15} = 32768$. It has two irreducible components.
The main component $Y_G$ has degree $32640$.
An extraneous component isomorphic to $\tilde{X}_{G_J}$  has degree $128$.
Here the partition is $J=\{\{1,2,3\}\}$.
Note that $|G_J|=7 = 15-8 = |G|-4s_J$. 
For points in $\tilde{X}_{G_J}$, the lines $1,2,3$  coincide.
\end{example}

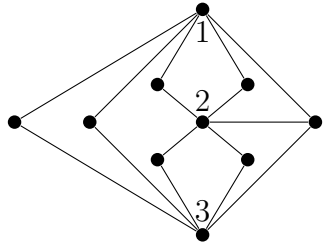
\begin{figure}[h]
\centering
\begin{tikzpicture}[scale=1, every node/.style={circle, fill=black, inner sep=1.8pt}]

    \tikzset{dot/.style={circle, fill=black, inner sep=1.8pt}}

    \node (top) at (0,3) {};    
    \node (midleft1)  at (-0.6,2) {};
    \node (midright1) at (0.6,2)  {};
    \node (center)    at (0,1.5) {}; 
    \node (midleft2)  at (-0.6,1) {};
    \node (midright2) at (0.6,1)  {};
    \node (bottom)    at (0,0) {};  
    
    \node (left1)  at (-1.5,1.5) {};
    \node (left2)  at (-2.5,1.5) {};
    \node (right) at ( 1.5,1.5) {};
    
    \draw (top) -- (midleft1);
    \draw (top) -- (midright1);
    \draw (midleft1) -- (center);
    \draw (midright1) -- (center);
    
    \draw (center) -- (midleft2);
    \draw (center) -- (midright2);
    \draw (midleft2) -- (bottom);
    \draw (midright2) -- (bottom);
    
    \draw (top) -- (left1) -- (bottom) -- (right) -- (top);
    \draw (top) -- (left2) -- (bottom) ;
    
    \draw (center) -- (right);

    \draw (0,2.7) node[draw=none, fill=none, inner sep=0] {$1$};

    \draw (0,1.78) node[draw=none, fill=none, inner sep=0] {$2$};

    \draw (0,0.32) node[draw=none, fill=none, inner sep=0] {$3$};

\end{tikzpicture}
\caption{This graph with $\ell \!=\! 10$ 
 is $(2,4)$-sparse, $K_{2,4}$-free, and not strictly contraction-stable. }
    \label{figure:eliaexample}
\end{figure}

\begin{theorem} \label{thm:isprime}
    If the incidence variety $\,V_G$ is irreducible, then the ideal $I_G$ is prime.
\end{theorem}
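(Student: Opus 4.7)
The plan is to exploit Theorem~\ref{thm:irred}: irreducibility of $V_G$ forces $G$ to be $(2,4)$-sparse and strictly contraction-stable, and in particular $V_G$ is a complete intersection of codimension $|G|$ in ${\rm Gr}(2,4)^\ell$. My goal is to show that this complete intersection ideal $I_G$ is reduced; since $V_G$ is irreducible, primality then follows.

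By Proposition~\ref{prop:WeFind}, it suffices to check that $\widetilde I_G$ is prime in the polynomial ring $\CC[\alpha,\beta,\gamma,\ldots]$. The ambient ring is regular, hence Cohen--Macaulay, and the $|G|$ generators $\widetilde{A_iA_j}$ form a regular sequence because $\widetilde V_G$ has codimension $|G|$. Thus the coordinate ring of $\widetilde V_G$ is Cohen--Macaulay, so every associated prime has height $|G|$ and there are no embedded primes. Combined with the irreducibility of $\widetilde V_G$, this shows $\widetilde I_G$ is $P$-primary for a unique prime $P$. By Serre's criterion ($R_0 + S_1$), primality now reduces to finding a single closed point $x \in \widetilde V_G$ at which the $|G|$ differentials $d(\widetilde{A_iA_j})$ are linearly independent in $T_x^\ast\,\CC^{4\ell}$; at such a smooth point the nilradical of the scheme vanishes, and by the $S_1$-property it then vanishes on the unique dense component.

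The hard part, and the main obstacle, is to produce such a smooth point, since a complete intersection ideal with irreducible support can be non-reduced everywhere (as in $(x,\,x+y^2)$). Here the Elekes--Sharir framework of Section~\ref{sec5} is the key. Writing each affine line via (\ref{eq:matricesST1}) as a pair $(s^{(i)},t^{(i)}) \in \CC^2\times\CC^2$, the equation $\widetilde{A_iA_j} = 0$ becomes $\|s^{(i)}-s^{(j)}\|^2 = \|t^{(i)}-t^{(j)}\|^2$, and its differential splits as an $s$-part minus a $t$-part; the $s$-part alone is the differential of the $ij$-coordinate of the rigidity map $\delta_G:\CC^{2\ell}\to\CC^G$. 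Since $G$ is $(2,3)$-sparse, the Geiringer--Laman theorem asserts $G$ is independent in the rigidity matroid $\mathcal{R}_\ell$, so the Jacobian of $\delta_G$ has rank $|G|$ at a generic $s \in \CC^{2\ell}$. Fixing such an $s$ and taking any $t$ with $\delta_G(t)=\delta_G(s)$, for instance the diagonal choice $t=s$, we obtain a point $(s,t) \in \widetilde V_G$ whose $|G|$ differentials are already linearly independent in the $s$-directions alone, hence independent altogether. This is the smooth point needed, and the argument is complete.
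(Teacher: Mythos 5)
Your proof is correct, but it takes a genuinely different route from the paper's. The paper argues via the multidegree: it notes that since $V_G$ is an equidimensional complete intersection with irreducible support, $I_G$ is primary of some length $\lambda$ over $\sqrt{I_G}$, and then proves $\lambda=1$ by an induction on $\ell$ that peels off a low-degree vertex and compares coefficients in the complete-intersection multidegree formula \eqref{eq:mutlidegCI}. You instead prove reducedness directly by Serre's criterion: the Cohen--Macaulay property of a complete intersection delivers $S_1$ for free, and you supply $R_0$ by exhibiting a smooth point. Your choice of smooth point is the nice part. On the diagonal $t=s$ with $s$ generic, the differentials of the $|G|$ equations project, in the $s$-directions, exactly onto the rows of the rigidity Jacobian $J_\ell$ restricted to $G$; because irreducibility (Theorem~\ref{thm:irred}) forces $(2,4)$-sparsity and hence $(2,3)$-sparsity, Geiringer--Laman guarantees this restricted Jacobian has full rank $|G|$. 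Linear independence of the projections onto the $s$-block already forces independence of the full differentials, and once the scheme is reduced at one closed point, $S_1$ (no embedded primes, unique minimal prime $P$) forces the nilradical to vanish at $P$ and hence globally. Both proofs are sound. The paper's avoids any geometric smoothness verification and works purely with graded degree data; yours avoids the induction and directly cashes in the Elekes--Sharir/rigidity dictionary already built in Section~\ref{sec5}, which arguably makes the role of $(2,3)$-sparsity more transparent.
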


\begin{proof}
    By Theorem~\ref{thm:irred}, $V_G$ is a complete intersection.
    Hence all associated primes of $I_G$ have the same dimension.
    Since $V_G$ is irreducible, 
      the radical ideal  $\sqrt{I_G}$ is  prime. In particular, $I_G$ is primary, of some length $\lambda$, 
      over the prime ideal $\sqrt{I_G}$.  The multidegree of $I_G$ equals 
      $\lambda$ times the multidegree of $\sqrt{I_G}$. 
      To establish that $I_G$ is prime, it suffices to prove $\lambda = 1$.

    We show this by induction on $\ell$, using Corollary~\ref{cor:CI}. The 
    case $\ell = 1$ is clear. Let $\ell \geq 1$. As in the proof of Theorem~\ref{thm:irred}, 
    fix a vertex $v_\ell$ of degree $r \leq 3$ and denote by $G^{'}$ the graph obtained from $G$
     by deleting $v_\ell$. 
     Let $v_1,\ldots,v_r$ be the vertices in $G$ adjacent to $v_\ell$. Pick a term $\gamma_u \, T_u$ with $T_u=\prod_{i=1}^\ell t_i^{5-u_i}$ in the multidegree of $I_{G}$ which involves $u_\ell = 4$ Schubert conditions on $v_\ell$. Such term exists by checking~\eqref{eq:mutlidegCI}. By standard Schubert calculus, we have that $\lambda \gamma_u = 2  \lambda^{'}  \gamma^{'}_u$, where $\gamma^{'}_u$ is the coefficient
      of the monomial $t_\ell^{-1} \, \prod_{i=1}^r t_i \, T_u$
      in the multidegree of $\sqrt{I_{G^{'}}}$, and $\lambda^{'}$ is such that the multidegree of $I_{G^{'}}$ 
      equals $\lambda^{'}$ times the multidegree of $\sqrt{I_{G^{'}}}$. By the induction hypothesis, $I_{G^{'}}$ is prime, and hence $\lambda^{'}=1$. On the other hand, from~\eqref{eq:mutlidegCI} we can read off $\gamma_u = 2  \gamma_u^{'}$. We conclude that $\lambda=1$, and hence that $I_G$ is prime.
\end{proof}

An interesting problem is determining which graph $G$ has a realization ($W_G \not= \emptyset$) or a strict realization ($Y_G \neq \emptyset$). The smallest graph that is not strictly realizable is $G=K_{2,3}^{'}$. This graph appeared in Example~\ref{ex:5a}, where we found that $Y_G = \emptyset$, but $W_G \neq \emptyset$. The smallest triangle-free graph $H$ that is not realizable is $K_{4,4}$ with one edge removed, see Example \ref{eg:K44_1edge}, for which $Y_H=W_H=\emptyset$. In general, these problems are related to \emph{incidence theorems}~\cite{FP}.

\begin{example}
    Other graphs $G$ with $Y_G = \emptyset$ are the wheel $W_\ell$ with even $\ell \geq 6$ (see Theorem \ref{thm:wheel}) and the `winner' graph for $\ell=7$ in Proposition~\ref{prop:winner7}. We can construct
         many other graphs which are not strictly realizable. For example,         
         let $G$ be obtained by gluing $r \geq 3$ complete graphs $K_{\ell_i}$ with $\ell_i \geq 3$ and $i=1,\dots,r$ along a common edge. Then, $Y_G = \emptyset$, but $W_G \neq \emptyset$.
\end{example}

We conclude by conjecturing a sufficient condition for strict realizability.

\begin{conjecture}
    Let $G$ be a $K_{2,3}^{'}$-free graph which is
        independent in $\mathcal{R}_\ell$. Then $Y_G \neq \emptyset$.
\end{conjecture}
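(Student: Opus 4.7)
The plan is to proceed by induction on $\ell$. For the inductive step, I invoke Remark~\ref{rem:sparsity}: since $G$ is $(2,3)$-sparse, it admits a vertex $v$ of degree $r \in \{0,1,2,3\}$. Set $G' := G - v$. Then $G'$ is $(2,3)$-sparse (sparsity is preserved under vertex deletion) and $K_{2,3}'$-free (as an induced subgraph of $G$), so by the inductive hypothesis $Y_{G'} \ne \emptyset$. Pick a generic strict realization $(L_1, \ldots, L_{\ell-1})$ in some irreducible component of $Y_{G'}$, and attempt to exhibit a line $L_v$ meeting exactly the neighbors of $v$ in $G$.

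Let $\mathcal{T} \subseteq \mathrm{Gr}(2,4)$ be the transversal variety of lines meeting all $r$ neighbors $L_{u_1},\ldots,L_{u_r}$ of $v$, and for each non-neighbor $j$ of $v$ let $\mathcal{B}_j$ denote the Schubert divisor of lines meeting $L_j$. It suffices to show $\mathcal{T} \not\subseteq \bigcup_j \mathcal{B}_j$, since the finite exclusions $L_v \ne L_i$ are easily avoided once $\mathcal{T}$ is positive-dimensional. When $r \le 2$, we have $\dim \mathcal{T} \ge 2$, and a routine dimension count analogous to that in the proof of Proposition~\ref{prop:YG_24_sparse} shows that $\mathcal{T} \setminus \bigcup_j \mathcal{B}_j$ is dense open for generic realizations.

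The main obstacle is the case $r = 3$. By $(2,3)$-sparsity, the vertices $\{v, u_1, u_2, u_3\}$ do not induce a $K_4$ in $G$, so at most two of the pairs $u_iu_j$ are edges. The geometry of $\mathcal{T}$ depends on these edges: if none are present, the $L_{u_i}$ are pairwise skew (by strictness in $G'$) and $\mathcal{T}$ is the ruling, opposite to the one containing $L_{u_1}, L_{u_2}, L_{u_3}$, of the unique smooth quadric $Q$ through them; if one or two are present, $\mathcal{T}$ decomposes into pencils through the concurrence point of two meeting lines and inside their spanned plane. In each sub-case $\dim \mathcal{T} = 1$ generically, and for realizations in a suitable open subset of $Y_{G'}$, each non-neighbor $L_j$ meets $\mathcal{T}$ in only finitely many lines, so a good $L_v$ exists.

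The crux is to control the ``bad'' locus where some non-neighbor $L_j$ satisfies $\mathcal{T} \subseteq \mathcal{B}_j$: such containment forces $L_j$ into a degenerate position (on the ruling of $Q$ containing $L_{u_1}, L_{u_2}, L_{u_3}$, or through the concurrence point of two meeting neighbors in their spanned plane, etc.). The plan is to show that whenever this degeneracy is unavoidable, i.e., forced by the incidences of $G'$ alone rather than a codimension condition on the realization, the induced subgraph on $\{v, j, u_1, u_2, u_3\}$ necessarily contains $K_{2,3}'$, contradicting the hypothesis. Carrying out this case-by-case matching between geometric degenerations and induced $K_{2,3}'$ subgraphs is the heart of the proof and the step I expect to be hardest. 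A secondary subtlety is securing the needed genericity on the inductive side: if $Y_{G'}$ is reducible (which is possible outside the $(2,4)$-sparse setting of Theorem~\ref{thm:irred}), one may need to either strengthen the inductive statement to track a component of $Y_{G'}$ that dominates $Y_G$, or replace the generic-realization argument with a direct dimension calculation showing that the ``bad'' locus has strictly positive codimension in every component of $Y_{G'}$.
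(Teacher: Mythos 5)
First, note that the statement you are addressing is stated in the paper as a \emph{conjecture}: the authors give no proof, so there is no argument to compare yours against, and the bar for your proposal is that it be a complete, self-contained proof. It is not. What you have written is a strategy outline: the step you yourself identify as ``the heart of the proof'' --- showing that whenever a non-neighbor $L_j$ is forced to satisfy $\mathcal{T}\subseteq\mathcal{B}_j$, the graph must contain an induced $K_{2,3}'$ --- is announced but never carried out, and this is exactly the only place where the hypothesis of $K_{2,3}'$-freeness could enter. Without that case analysis, the argument proves nothing beyond what Proposition~\ref{prop:YG_24_sparse} already gives in the $(2,4)$-sparse case.

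Moreover, the missing step is not merely laborious; the way you have framed it is likely too local to succeed. You propose to match each unavoidable degeneracy of the transversal family $\mathcal{T}$ with an induced $K_{2,3}'$ on the five vertices $\{v,j,u_1,u_2,u_3\}$. But a degenerate position of $L_j$ (e.g.\ $L_j$ lying in the same ruling of the quadric $Q$ as $L_{u_1},L_{u_2},L_{u_3}$) need not be forced by the induced subgraph on those five vertices: it can be forced globally, by incidence theorems involving many lines, as the paper's own examples show (the wheel graphs with even $\ell$ in Theorem~\ref{thm:wheel}, the graph of Proposition~\ref{prop:winner7}, and $K_{4,4}$ minus an edge in Example~\ref{eg:K44_1edge} all have $Y_G=\emptyset$ for reasons invisible on small vertex subsets). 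So ruling out a forbidden five-vertex subgraph does not by itself exclude a forced containment $\mathcal{T}\subseteq\mathcal{B}_j$ on every component of $Y_{G'}$. This interacts with the second gap you flag but do not resolve: induction only gives $Y_{G'}\neq\emptyset$, while your construction needs a component of $Y_{G'}$ on which the bad locus is proper, and outside the $(2,4)$-sparse regime of Theorem~\ref{thm:irred} you have no control over the components of $Y_{G'}$. Until both of these points are handled, the proposal remains a plausible plan for an open problem, not a proof.
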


\section{Spanning Tree Coordinates}
\label{sec4}

In this section we introduce the coordinates
we find most useful for symbolic computations with 
incidence varieties of lines in $3$-space.
Let $T$ be a tree on $[\ell]$.
This is a connected graph with $\ell-1$ edges $e \in \binom{[\ell]}{2}$.
For each edge $e$ of $T$ we introduce a $2 \times 2$ matrix 
$X_e$ whose $4$ entries are unknowns. 
Let $\CC[X]$ denote the polynomial ring in these $4\ell-4$ variables.
The $2 \times 2$ determinants $ {\rm det}(X_e)$
are quadrics in this polynomial ring.  Since they use distinct variables,
they generate a prime ideal
which is a complete intersection of codimension $\ell-1$.

We now fix an arbitrary connected graph $G$ with vertex set $[\ell]$.
Choose a spanning tree $T$ of $G$, and let $\CC[X]$ be the
polynomial ring defined in the previous paragraph.
For any edge $g$ of $G$ that is not in $T$,  consider the
graph $T \cup \{g\}$. This graph has a unique cycle,
consisting of the edge $g$ and some edges $e_1,\ldots,e_r$
from the tree $T$. We define the $2 \times 2$ matrix
$$ Y_g \,\,=\,\, \sum_{i=1}^r X_{e_i} \, . $$
With the pair $(G,T)$ we associate the following ideal of quadrics in the polynomial ring $\CC[X]$:
\begin{equation}
\label{eq:GTideal}
I_{G,T} \,\,\, = \,\,\,
\bigl\langle \,{\rm det}(X_e) \,: \, e \in T \, \bigr\rangle \,+\,
\bigl\langle \,{\rm det}(Y_g) \,: \, g \in G \backslash T  \,\bigr\rangle  \, .
\end{equation}

\begin{example}[Cycle]
Let $G$ be the $\ell$-cycle with spanning tree $T = \{12,23,\ldots,(\ell-1)\ell\}$
and extra edge $g = 1\ell$. The polynomial ring $\CC[X]$
has $4\ell-4$ variables, namely the entries of
the $2 \times 2$-matrices $X_1,X_2,\ldots,X_{\ell-1}$ for the chain $T$.
The ideal defined above equals
$$ I_{G,T} \,\,\, = \,\,\,
\bigl\langle \, {\rm det}(X_1),\,
{\rm det}(X_2),\,\ldots,\,{\rm det}(X_{\ell-1}),\,
{\rm det}(X_1+X_2 + \cdots+X_{\ell-1})\, \bigr\rangle \, .$$
This ideal is prime and it is a complete intersection. This can be
shown by arguing that the variety of $I_{G,T}$ is
irreducible of codimension $\ell$ in $\CC^{4\ell-4}$, and the $\ell$  generators of $I_{G,T}$
form a Gr\"obner basis with square-free leading terms for the
reverse lexicographic term order.
\end{example}

\begin{example}[Complete graph] \label{ex:compgra2}
Let $G = K_\ell$ and let $T$ be a star tree on $[\ell]$. Then have
$$ I_{G,T} \,\, = \,\,
\bigl\langle \, {\rm det}(X_i)\,:\, 1\leq i \leq \ell-1 \bigr\rangle
\,+\,
\bigl\langle \, {\rm det}(X_i+X_j)\,:\, 1\leq i < j \leq \ell-1 \bigr\rangle \, .
$$
This ideal is radical for $\ell=3$ and not radical for $\ell\geq 4$.
It has two associated primes, namely
$$ \bigl\langle \hbox{$2 \times 2$-minors of} \ 
\bigl( X_1 \,|\, X_2 \,|\, \cdots\, |\, X_{\ell-1} \,\bigr) \bigr\rangle \quad {\rm and} \quad
\bigl\langle \hbox{$2 \times 2$-minors of} \ \bigl( X_1^T \,|\, X_2^T \,|\, \cdots\, |\, X_{\ell-1}^T \,\bigr)  \bigr\rangle \, .
$$
Each prime ideal is generated by the maximal minors of a
$2 \times (2\ell-2)$ matrix of unknowns. We
 obtain the matrices by concatenating the $\ell-1$ given $2 \times 2$ matrices, or their transposes.
\end{example}

We now show that $I_{G,T}$ is   isomorphic to  the affine ideal $\widetilde I_G$
from Section \ref{sec2}. In particular, $I_{G,T}$ is independent of
the spanning tree $T$, up to a linear change of coordinates.
It depends only on the graph $G$.
We thus obtain an 
effective representation of our incidence variety $V_G$ 
in ${\rm Gr}(2,4)^\ell$. This respects the scheme structure,
by Proposition \ref{prop:WeFind}. The entries
of the  $2 \times 2$ matrices $X_e$,
indexed by the tree edges $e \in T$, are 
the {\em spanning tree coordinates} for $V_G$.

We construct
the desired isomorphism from the affine coordinates in
(\ref{eq:ABCDE}). Relabel the graph so that
$1,2,\ldots,\ell$ is a topological ordering for the tree $T$, i.e.
for each vertex $k \in [\ell]\backslash \{1\}$
there is a unique vertex  $j_k \in [k-1]$ such that
$(j_k,k)$ is an edge in $T$.
For the first vertex $1$ we replace $\,(\alpha_1 ,\alpha_2,\alpha_3,\alpha_4)$ by
$(0,0,0,0)$ in the matrix ${\bf A}$ in (\ref{eq:ABCDE}). Now, the first line has 
the Pl\"ucker vector $e_{12}$. For each $k \geq 2$, with corresponding
$2 \times 4$ matrix ${\bf B}$, we replace the right $2 \times 2$ block $\beta$ 
by $X_{k-1}$ minus the $2 \times 2$ matrix previously defined for vertex~$j_k$.
We finally add four new unknowns $y_1,y_2,y_3,y_4$ to the polynomial ring
$\CC[X_1,\ldots,X_{\ell-1}]$. These variables do not appear in any of
our polynomials and are just used to get up to $4\ell = {\rm dim}({\rm Gr}(2,4))^\ell$.

\begin{proposition}
Fix a graph $G$ and a spanning tree $T$.
The polynomial ring with $4 \ell $ variables in Section \ref{sec2} is isomorphic to
$\CC[X_1,\ldots,X_{\ell-1}] $ under the map given above. This isomorphism
identifies the ideal $\widetilde I_G$ of the
incidence variety $V_G$ with the ideal
$I_{G,T}$ in (\ref{eq:GTideal}).
\end{proposition}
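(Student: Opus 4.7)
The plan is to verify that the substitution described just before the proposition is a linear ring isomorphism, and then to check generator by generator that it sends $\widetilde{I}_G$ onto $I_{G,T}$.

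For the isomorphism step, I would unwind the recursion. Setting $\alpha^{(1)} \mapsto (y_1,\ldots,y_4)$ and iterating the rule $M_k = X_{k-1} + M_{j_k}$ along the topological ordering $1<2<\cdots<\ell$ gives, by a short induction, $\alpha^{(k)} \mapsto M_k = M_1 + \sum_{e \in P_T(1,k)} X_e$, where $P_T(1,k)$ is the path in $T$ from $1$ to $k$. Stacking these substitutions yields a linear map on the $4\ell$ variables of $\CC[\alpha]$ that is lower-triangular in the topological order, hence invertible. So $\Phi\colon\CC[\alpha] \to \CC[X_1,\ldots,X_{\ell-1},y_1,\ldots,y_4]$ is a ring isomorphism.

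For the generators of $\widetilde{I}_G$, I would treat tree and non-tree edges separately. When $(j_k,k) \in T$, the recursion immediately gives $M_k - M_{j_k} = X_{k-1}$, so $\Phi(\widetilde{A_{j_k}A_k}) = \det(X_{k-1})$, matching a generator of the first summand of $I_{G,T}$ in \eqref{eq:GTideal}. When $g=(i,j) \in G \setminus T$, telescoping $M_i - M_j$ along the tree path from $i$ to $j$ yields $M_i - M_j = \sum_{s=1}^r \eta_s X_{e_s}$ with $\eta_s \in \{\pm 1\}$, where $e_1,\ldots,e_r$ are exactly the tree edges of the unique cycle in $T \cup \{g\}$. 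Hence $\Phi(\widetilde{A_iA_j}) = \det\bigl(\sum_s \eta_s X_{e_s}\bigr)$, matching $\det(Y_g)$ once the sum defining $Y_g$ is read as signed along the cycle orientation induced by the tree.

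The main subtlety, and the step I expect to require the most care, is exactly this sign bookkeeping: written as $Y_g = \sum X_{e_s}$ in \eqref{eq:GTideal}, the summation appears unsigned, whereas $M_i - M_j$ manifestly produces a signed sum over the same edges. The cleanest resolution is to interpret the summation defining $Y_g$ as signed around the canonical cycle orientation. An alternative is to compose $\Phi$ with the automorphism of $\CC[X]$ sending $X_e \mapsto \epsilon_e X_e$ for a suitable choice of signs, which preserves every $\det(X_e)$ (since $\det(-X) = \det(X)$ for $2\times 2$ matrices) and converts each signed sum to the form written in \eqref{eq:GTideal}. With either resolution, $\Phi$ sends every generator of $\widetilde{I}_G$ to a generator of $I_{G,T}$, so $\Phi(\widetilde{I}_G) = I_{G,T}$, as claimed.
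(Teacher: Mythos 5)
Your main line is exactly the paper's argument: the tree--edge difference matrices have $4\ell-4$ independent entries, so the substitution is a linear change of coordinates sending each tree-edge quadric to $\det(X_e)$, and for a non-tree edge the difference matrix telescopes around the unique cycle, giving the determinant of a sum of the $X_{e_s}$. The paper's own proof is in fact terser than yours on the one point you worry about: it simply asserts that the cycle matrices sum to zero, so the non-tree matrix ``becomes $-Y_g$'', with no discussion of the orientation signs. Your observation that the telescoping produces a \emph{signed} sum $\sum_s \eta_s X_{e_s}$ is therefore a legitimate refinement, and your first resolution (read $Y_g$ as the signed sum along the cycle; the overall per-cycle sign is harmless since $\det(-M)=\det(M)$) is the right way to make the statement airtight.

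The one genuine flaw is the claim that ``either resolution'' works: the global sign-flip automorphism $X_e \mapsto \epsilon_e X_e$ need not exist. Already for $G=K_4$ with the star spanning tree (Example \ref{ex:compgra2}), the three leaf--leaf edges produce the matrices $X_i - X_j$ for all pairs $i\neq j$, and converting each to $X_i + X_j$ would force $\epsilon_i = -\epsilon_j$ for every pair of the three leaves, which is impossible. So you cannot assert that $\Phi$ (composed with such an automorphism) sends every generator of $\widetilde I_G$ to a generator of $I_{G,T}$. In that example the conclusion still holds at the level of ideals, because for a triangle $\det(X_i - X_j)$ and $\det(X_i + X_j)$ agree modulo $\det(X_i)$ and $\det(X_j)$, but that is an additional argument, not a consequence of a sign change of coordinates. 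Rely on your first resolution (the signed reading of $Y_g$, equivalently a choice of orientation of each cycle) and drop the second, or justify separately why the discrepancy is absorbed into the ideal in the cases where no consistent $\epsilon$ exists.
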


\begin{proof}
Consider the $2 \times 2$ matrices in (\ref{eq:AdotB})
that correspond to the $\ell-1$ edges of the tree $T$. The 
$4 \ell - 4$ entries of these matrices are
independent linear forms. Our substitution 
replaces these linear forms by variables, namely
the entries of the matrices $X_1,\ldots,X_{\ell-1}$.
For every edge $g$ in $G\backslash T$,
the $2 \times 2$ matrix in (\ref{eq:AdotB}) sums
to zero with the matrices of the edges
$e_1,\ldots,e_r$ that create the cycle
with $g$ in $T \cup \{g\}$.
Under our substitution, this matrix becomes $-Y_g$.
\end{proof}

By abuse of notation, the symbol
$I_G$ now denotes the ideal $I_{G,T}$, whenever the tree $T$
is understood. The {\tt degree} of 
that ideal $I_G$, as computed by {\tt Macaulay2},
equals the total degree in Corollary~\ref{cor:subscheme}.
For instance, we now simply write $I_{K_\ell}$
for the ideal $I_{G,T}$ in Example~\ref{ex:compgra2}.
The two determinantal primes
correspond to $I_{[\ell]}$ and $I_{[\ell]}^*$ in Example \ref{ex:complete}.
See also Example~\ref{ex:12affine}. 

In the remainder of this section, we demonstrate 
the effectiveness of the spanning tree coordinates when
performing symbolic computations for the incidence varieties $V_G$.
For $\ell = 6$, when the polynomial ring $\CC[X]$ has $20$ variables,
the computation is fast in  {\tt Macaulay2}.

\begin{example}[Bipartite graph] \label{ex:K24}
Let $\ell = 6$ and fix $G = K_{2,4}$. For our ideal we can take
$$ \begin{matrix} \!\!\!\!\!\!\!\! I_G \,\, = &  \bigl\langle\,
 {\rm det}(X_1), \,{\rm det}(X_2),\, {\rm det}(X_3),\, {\rm det}(X_4),\, {\rm det}(X_5),\,\qquad \\
& {\rm det}(X_1+X_4+X_5),\,
{\rm det}(X_2+X_4+X_5),\,
{\rm det}(X_3+X_4+X_5)\,
\bigr\rangle \, . \end{matrix}
$$
This ideal is a complete intersection and radical.
Namely, $I_G$ equals the intersection of the prime ideal
$P = \langle {\rm det}(X_1), {\rm det}(X_2), {\rm det}(X_3), {\rm det}(X_4)  \rangle
+ \bigl\langle X_4 \,+ X_5 \bigr\rangle$
and the prime ideal $I_G:P$.
In addition to the eight generators of $I_G$, 
the quotient $I_G:P$ has one extra quartic generator.
Its variety $W_G$ is the incidence variety for
the classical Schubert problem in Example \ref{ex:onevertex}.
Points in $W_G$ are quadruples of lines in $\PP^3$ 
together with the pair of lines that meet them.
On the extraneous component given by $P$,
that pair of lines gets replaced by a double line.
\end{example}

\begin{example}[Triangulated hexagon] \label{ex:trianghex}
The ideal $I_G$  is computed in {\tt Macaulay2} as follows:
\begin{verbatim}
R = QQ[a1,a2,a3,a4,a5,b1,b2,b3,b4,b5,c1,c2,c3,c4,c5,d1,d2,d3,d4,d5];
X1 = matrix {{a1,b1},{c1,d1}}; X2 = matrix {{a2,b2},{c2,d2}}; 
X3 = matrix {{a3,b3},{c3,d3}}; X4 = matrix {{a4,b4},{c4,d4}}; 
                               X5 = matrix {{a5,b5},{c5,d5}};
IG = ideal( det(X1), det(X2), det(X3), det(X4), det(X5),
           det(X1+X2), det(X2+X3), det(X3+X4), det(X4+X5) );
betti mingens IG, codim IG, degree IG
DIG = decompose IG; toString DIG
intersect(DIG) == IG
\end{verbatim}
 We find that the radical ideal $I_G$ is the intersection of
$16$ prime ideals of codimension $9 = |G|$.
Here $G$ is one of the two triangulations of the hexagon.
This confirms Theorem \ref{thm:triangulations} for $\ell=6$.
\end{example}

\section{Six, Seven and Eight Lines}
\label{sec6}

We now return to the classification of the
 incidence varieties $V_G$.
In Section \ref{sec2} we computed all of them for $\ell \leq 5$.
We quickly saw that the structure of the ideals $I_G$ can be quite rich and interesting.
The combinatorics of the multidegrees in Section \ref{sec3}  further underscores this. 

This section is dedicated to much harder computations.
We compute the incidence varieties for graphs with up to $\ell = 8$ vertices.
This involves
polynomials in $24$, $28$, and $32$ variables in the affine coordinates from Section \ref{sec2}.
 We seek to compute the dimension, degree, and primary decomposition of $I_G$.  
  For most graphs $G$ with $\ell \geq 7$, this is intractable with symbolic computation, even if we leverage the spanning tree coordinates in Section \ref{sec4}. 
  
  We instead use tools from \emph{numerical algebraic geometry} (cf.~\cite{BT}).
   This allows us to compute a \emph{numerical irreducible
   decomposition} for all connected graphs up to $\ell = 7$ and
   all connected triangle-free graphs up to $\ell = 8$.
    Our findings are summarized in Tables \ref{table:all-connected-graphs} and \ref{table:triangle-free-graphs}.

\begin{table}[h]
	\centering
	\small
	\begin{tabular}{|c|p{2.2cm}|p{2.0cm}|p{1.8cm}|p{1.8cm}|}
		\hline
		$\ell$ & Connected Graphs & Complete Intersection & Irreducible & Realizable ($W_G \not= \emptyset$) \\
		\hline
		\hline
		4 & 6     & 5      & 3  &   6 \\
		\hline
		5 & 21    & 16    & 6  &  21 \\
		\hline
		6 & 112  & 69   & 17 &  103 \\
		\hline
		7 & 853 & 379 & 52 &  681 \\
		\hline
	\end{tabular}
	\caption{Numerical irreducible decomposition of $V_G$ for connected graphs $G$ with $\ell = 4,5,6,7$.
	The four columns report the number of all graphs $G$
	 up to isomorphism, the number whose variety  $V_G$ is a complete intersection,
	 whose $V_G$ is irreducible, and whose
	 $W_G$ is non-empty.}
	\label{table:all-connected-graphs}
\end{table}

\begin{table}[h]
	\centering
	\small
	\begin{tabular}{|c|p{2.2cm}|p{2.0cm}|c|c|}
		\hline
		$\ell$ & $\mbox{Triangle-Free}$ Graphs & Complete Intersection & Irreducible & Realizable \\
		\hline
		\hline
		4 & 3    &   3       & 3     &  3 \\
		\hline
		5 & 6    &   6       & 6     &  6 \\
		\hline
		6 & 19  &   19      & 17   &  19 \\
		\hline
		7 & 59  &   57      & 52  &  59 \\
		\hline
		8 &267 &   254   & 219 &  266 \\
		\hline
	\end{tabular}
	\caption{Numerical irreducible decomposition of $V_G$ for triangle-free connected graphs $G$ with $\ell = 5,6,7,8$.
	The numbers in the columns report the same properties as in Table \ref{table:all-connected-graphs}.}
	\label{table:triangle-free-graphs}
\end{table}

\smallskip
The numerical irreducible decomposition of our incidence variety $V_G$ gives a \emph{witness set} for each component $V$.
 This is a pair $(W, L)$, where $L$ is a generic linear space such that ${\rm codim }(V) + {\rm codim}(L) = 6 \ell$ 
 and $W =  V \cap L$ is a finite set of points such that $|W| = {\rm degree}(V)$. 
 Thus the dimension and degree of each component of $V_G$ are
 read off from the witness set.  We say that a non-edge $ij \notin G$ {\em vanishes numerically} on an
 irreducible component $V$ of $V_G$ if
 \begin{equation}
 \label{eq:tolerance}
\qquad \qquad
{\rm max}_{p \in W} |A_iA_j(p)|  < \varepsilon \, , \quad
\qquad \hbox{for some tolerance $\varepsilon>0$} \, .
\end{equation}
A graph $G$ is listed as {\em realizable} in Tables \ref{table:all-connected-graphs}
or \ref{table:triangle-free-graphs} if (\ref{eq:tolerance})
fails to hold for all non-edges $ij$, for a suitable $\epsilon$.
 We now discuss highlights
 from our data,  beginning  with triangle-free graphs.

\begin{example}[$\ell = 6$, triangle-free]
Among the $19$ triangle-free graphs $G$, there are
$17$ for which $V_G$ is irreducible. 
 The two others are the bipartite graphs $K_{2,4}$ and $K_{3,3}$. They are both $(2,3)$-sparse and CS, hence they give a complete intersection by Theorem \ref{thm:CI}.
$K_{2,4}$ is (2,4)-sparse but not SCS, while $K_{3,3}$ is SCS but not (2,4)-sparse. Hence they are both reducible by Theorem \ref{thm:irred}. The decomposition of
$V_{K_{2,4}}$ was discussed in Example~\ref{ex:K24}.
The variety  $V_{K_{3,3}}$ has three components, of codimension 9 and degrees 10, 10, 492. The components of degree 10 are $V_{123456}$ and $V_{123456}^\ast$. The remaining component is 
the realization~$W_{K_{3,3}}$.
\end{example}

\begin{example}[$\ell = 7$, triangle-free]
Among the $59$ graphs, $52$ are irreducible by Theorem \ref{thm:irred}.
There are $5$ graphs $G$ which are (2,3)-sparse and CS, but not SCS, hence $V_G$ is a reducible complete intersection by Theorem \ref{thm:CI}.
Two of them come from adding a pendant edge to $K_{2,4}$. 
In both cases, $V_G$ has $2$ components of codimension $9$ and degrees $480$, $32$. 
The next graph is $K_{2,4}$ with a new vertex connected to two
bivalent vertices.
Here, $V_G$ has 2 components of codimension $10$ and degrees $960$, $64$. 
Two more graphs come from $K_{3,3}$ by adding a pendant edge or triangle.
The variety $V_G$ has $3$ components of codimension $10$ and degrees $20, 20, 984$
resp.~$6$ components of codimension $11$ and degrees $12, 12, 28, 28, 128, 1840$.
The two graphs which are reducible and not a complete intersection  are $K_{2,5}$ and $K_{3,4}$.
Example \ref{ex:K25} featured  $K_{2,5}$. The graph $K_{3,4}$ has $21$ components.
In addition to the coplanar and concurrent loci of codimension 11, we discovered
$19$ components of codimension $12$.
\end{example}

\begin{example}[$\ell = 8$, triangle-free]\label{eg:K44_1edge} Among the $267$ graphs, $266$ are realizable.
The unique  graph $G$ with $W_G = \emptyset$ is $K_{4,4}$ with one edge removed. 
Its variety $V_G$ has two components of codim $13$, four of codim $14$, and $57$ components of codim $15$.
 In each of these components, there is a non-edge $ij \notin G$ satisfying
(\ref{eq:tolerance}) on every witness set we computed.  
This is the first triangle-free incidence theorem we found.
For $\ell \leq 7$,   every triangle-free graph is realizable.
\end{example}

Complete bipartite graphs play a prominent role
in our discussion of triangle-free graphs.
We record the following fact about their realizations.
The proof of Proposition \ref{prop:rulings} is omitted.

\begin{proposition} \label{prop:rulings}
Let $G = K_{a,b}$ where $a,b\geq 3$.
Then $Y_G$ is irreducible, and it parametrizes
$a+b$ lines on some quadric surface, with $a$ lines in one ruling
and $b$ lines in the other ruling.
\end{proposition}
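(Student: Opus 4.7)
The plan is to exhibit a dominant map onto $Y_G$ from a visibly irreducible parameter space, leveraging the classical regulus theorem: any three pairwise skew lines in $\PP^3$ lie on a unique smooth quadric surface $Q$ in a common ruling, and the lines meeting all three are exactly the lines of the opposite ruling of $Q$. Since $Y_G$ is the closure of the stratum of strict realizations, it suffices to describe every such realization in terms of a quadric with its two rulings and then assemble these data into one irreducible parameter space.

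First I would analyze a generic strict realization $(L_1,\ldots,L_a, M_1,\ldots,M_b)$ of $K_{a,b}$. By definition of $Y_G$, these are distinct lines with exactly the incidences prescribed, so the $L_i$'s are pairwise skew and likewise for the $M_j$'s, while each $L_i$ meets each $M_j$. Using $a \geq 3$, let $Q$ be the unique smooth quadric containing $L_1, L_2, L_3$ in a common ruling $R_1$; each $M_j$ meets all three, so the regulus theorem forces $M_j$ into the opposite ruling $R_2$ of $Q$. Using $b \geq 3$ now, the same argument applied to $M_1, M_2, M_3$ produces a unique smooth quadric containing them, which must coincide with $Q$ since $M_1, M_2, M_3 \subset Q$. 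Then each remaining $L_i$ with $i \geq 4$ meets $M_1, M_2, M_3$ and therefore lies in $R_1$. Hence every strict realization consists of $a+b$ lines on a single smooth quadric, split between the two rulings as claimed.

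For irreducibility, consider the parameter space
\[ \mathcal{P} \,=\, \bigl\{(Q, L_1, \ldots, L_a, M_1, \ldots, M_b) \,:\, Q \text{ smooth quadric in } \PP^3,\ L_i \subset Q \text{ distinct in one ruling},\ M_j \subset Q \text{ distinct in the other}\bigr\}. \]
The flag variety $\mathcal{F} = \{(Q, L) : L \subset Q \text{ smooth quadric}\}$ is a single $\mathrm{PGL}(4)$-orbit and hence irreducible; the projection $\mathcal{P} \to \mathcal{F}$, $(Q, L_1, \ldots, M_b) \mapsto (Q, L_1)$, is a locally trivial fibration with irreducible fiber (an open subset of $(\PP^1)^{a-1} \times (\PP^1)^b$, once the ruling of $L_1$ is identified), so $\mathcal{P}$ is irreducible. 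The regular map $\mathcal{P} \to Y_G$ forgetting $Q$ is dominant by the first step, so $Y_G$ is irreducible as the closure of the image of an irreducible variety. The only delicate point is verifying that $\mathcal{F}$ does not split into two components indexed by ruling class: this holds because the stabilizer $\mathrm{PO}(4)$ of a smooth quadric in $\mathrm{PGL}(4)$ contains a ruling-swapping involution in its non-identity component, while its identity component $\mathrm{PSO}(4) \cong \mathrm{PSL}(2) \times \mathrm{PSL}(2)$ acts transitively on each ruling.
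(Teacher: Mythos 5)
The paper gives no argument of its own here (it states explicitly that the proof of Proposition \ref{prop:rulings} is omitted), so there is nothing to compare against; your proof is correct and is surely the intended one. The regulus argument pins every strict realization to the unique smooth quadric through $L_1,L_2,L_3$ (with $b\geq 3$ then forcing the remaining $L_i$ onto it), and irreducibility of $Y_G$ follows from your parameter space $\mathcal{P}$, whose image consists precisely of strict realizations. The only point stated loosely is the ``locally trivial fibration'' claim for $\mathcal{P}\to\mathcal{F}$; it does hold (the ruling bundles admit local sections), but for irreducibility you could equally invoke the standard criterion for a surjective proper map with irreducible base and irreducible fibers of constant dimension, applied to the fiber product of the two ruling $\PP^1$-bundles, so this is cosmetic rather than a gap.
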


We next turn to connected graphs on $\ell = 6, 7$ vertices,
where triangles are now allowed.

\begin{example}[$\ell = 6$]
The smallest incidence theorems involve six lines.
Among the $112$ connected graphs, nine are not realizable. One such graph is the wheel graph $W_6$. Its incidence
variety $V_{W_6}$ has two components of codim $9$ (degrees $14, 14$) 
and $20$ components of codimension $10$. Among these
$20$ components, ten have degree $26$ and ten have degree $58$.
\end{example}

\begin{figure}[h]
	\centering
	\includegraphics[scale = .38]{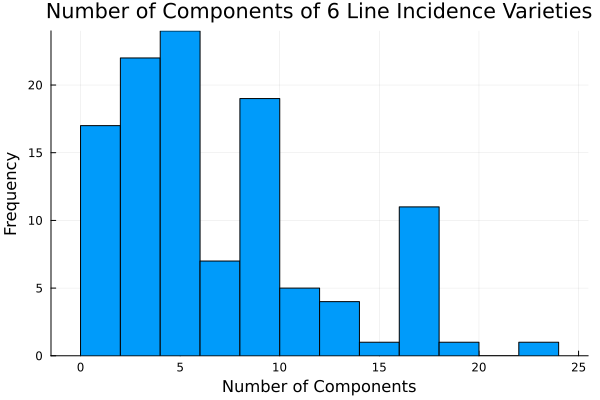} \includegraphics[scale = .38]{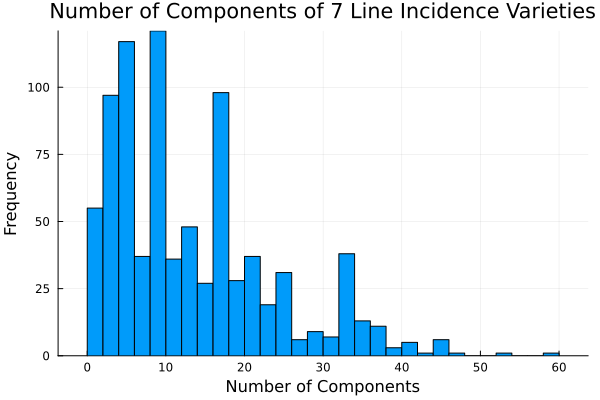}
	\caption{Histogram for the number of components of $V_G$ for connected graphs with $\ell=6,7$.}
	\label{figure:6-node-comps}
\end{figure}

The wheel graph $W_6$ has the maximum number of components, namely $22$, among all connected graphs 
with $\ell=6$. It is the only graph to achieve this maximum. The histograms  in Figure \ref{figure:6-node-comps} 
display the number of components for all incidence varieties
for $\ell \in \{6,7\}$.

Theorem \ref{thm:wheel} offers a general result on 
the irreducible components of the wheel graphs.
The wheel graph $W_7$ has $52$
 irreducible components, but Figure \ref{figure:6-node-comps}  shows
 that there is another graph with $\ell=7$ vertices which has more than $52$ components.
 We now present that winner.

\begin{proposition} \label{prop:winner7}
Let $G$ be the complete graph $K_7$ with a $7$-cycle removed, so
$|G| = 14$. Then $V_G$ has $58$ irreducible components,
which is the maximum among all graphs with $\ell= 7$.
The graph $G$ represents an incidence theorem for lines in $3$-space, i.e.~we have $W_G = \emptyset$.
\end{proposition}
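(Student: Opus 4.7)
The plan is to follow the numerical protocol of Section~\ref{sec6}, since $G = K_7 \setminus C_7$ is a $4$-regular graph on $7$ vertices with $14$ edges and contains many induced copies of $K_4$ (so it is far from being $(2,3)$-sparse, and Theorems~\ref{thm:CI}, \ref{thm:irred} do not apply directly). First I would set up $I_G$ in the spanning tree coordinates of Section~\ref{sec4}. Choose a spanning tree $T \subseteq G$ with six edges, yielding six determinantal generators $\det(X_e)$ on a polynomial ring in $24$ variables; the remaining eight edges of $G$ contribute eight cycle-quadrics $\det(Y_g)$. The resulting system of $14$ quadrics cuts out $V_G$ as an affine cone in $\mathbb{C}^{24}$.

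Next I would compute a numerical irreducible decomposition of $V_G$ using \texttt{HomotopyContinuation.jl} \cite{BT}, exactly as described in Section~\ref{sec6}. For every candidate codimension $c$, intersect $V_G$ with a generic linear space of complementary dimension and track homotopy paths from a starting system to the target to obtain a witness superset. The witness superset is then partitioned into witness sets for each irreducible component using monodromy loops and the trace test; dimension and degree of each component are read off directly. To isolate the components reliably I would exploit the $D_7$ automorphism of $G$: by applying the cyclic shift of the vertices to any witness point, one can group components into $D_7$-orbits, which serves as a sanity check that the count $58$ is consistent with the symmetry.

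For the non-realizability claim $W_G = \emptyset$, for each of the $58$ components $V$ with witness set $W$ I would iterate over the seven non-edges $ij \in C_7$ and evaluate $|A_i A_j(p)|$ for every $p \in W$, verifying condition \eqref{eq:tolerance} for at least one $ij$ on every component. If every component fails \eqref{eq:tolerance} for some non-edge, then by definition of $W_G$ and the tolerance criterion following it, $W_G = \emptyset$. Finally, the maximality statement is obtained by running the same decomposition routine on all $853$ connected graphs with $\ell = 7$ listed in Table~\ref{table:all-connected-graphs} (the histogram in Figure~\ref{figure:6-node-comps} already shows $G$ as the unique rightmost bar) and confirming that no other graph attains $58$ or more components.

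The main obstacle is rigor rather than conceptual: witness set methods only give numerically certified (not algebraically proven) component counts, and with $58$ components some of high codimension and potentially small degree, one must choose the linear slice and tolerances carefully to ensure no component is missed or accidentally merged with another. In particular, verifying that two witness points belong to distinct components via monodromy requires enough successful loops to stabilize the partition; conversely, the incidence check \eqref{eq:tolerance} must be performed with a tolerance small enough to reject spurious coincidences. A secondary challenge is purely computational: the scale of the enumeration (numerical decomposition for all $853$ graphs, with rerunning for components of varying codimension) demands the raw data hosted on \texttt{Zenodo} \eqref{eq:zenodo} rather than a standalone verification.
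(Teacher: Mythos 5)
Your overall approach — compute a numerical irreducible decomposition in spanning tree coordinates via \texttt{HomotopyContinuation.jl}, certify realizability via the tolerance criterion (\ref{eq:tolerance}), and confirm maximality by sweeping all $853$ connected graphs on $7$ vertices — is precisely what the paper does; the paper's ``proof'' is labeled \emph{Discussion} exactly because it records the output of this computation rather than giving a hand proof. The paper supplements the raw count with a structural breakdown: the two $17$-dimensional components $V_{[7]}$ and $V_{[7]}^\ast$, plus $56 = 4 \times 2 \times 7$ further components grouped into four classes of size $14$ under the Hodge involution (Proposition~\ref{prop:hodge}) paired with cyclic shift, each class identified by which non-edges of $C_7$ vanish on it. Your suggestion to organize the witness sets into $D_7$-orbits is a close cousin of this, but note that the paper's factor of $2$ comes from Hodge duality, not from the reflections in $D_7$; the two order-$14$ groups are different, and keeping them straight matters if you want to reproduce the paper's count of $4$ classes of size $2 \times 7$.

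One factual slip worth flagging: $G = K_7 \setminus C_7$ contains \emph{no} induced $K_4$ whatsoever. A $K_4$ in $G$ would be an independent set of size $4$ in $C_7$, but the independence number of $C_7$ is $3$. Your conclusion that $G$ is far from $(2,3)$-sparse is still correct — $|G| = 14 > 2\cdot 7 - 3 = 11$ shows the whole graph already violates the sparsity count, so $\operatorname{rank}(G) < |G|$ and Theorems~\ref{thm:CI} and~\ref{thm:irred} do not give $V_G$ directly — but the $K_4$ justification you offered for that conclusion is false.
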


\begin{proof}[Discussion]
The $17$-dimensional varieties $V_{[7]}$ and $V_{[7]}^*$
are  components of $V_G$. The other $56 = 4 \times 2 \times 7$ components
have dimension $\leq 15$.
There are four classes, each of size $2 \times 7$,
from the involution in Proposition \ref{prop:hodge}
plus cyclic symmetry. We describe each class
by the non-edges which vanish on that component; see~(\ref{eq:tolerance}).
We label the $7$ non-edges  by $\{i,i+1\}$.

The unique class of dimension $15$
is given by five consecutive non-edges $12,23,34,45,56$.
It has degree $38$. The other three classes have dimension $14$.
One of these classes has degree $58$. It is given by three consecutive non-edges $12,23,34$.
Another class has degree $124$, and it is given by three non-edges, like $12,23,56$,
where only one pair is adjacent. Finally, there is a class of 
degree $266$. Here only one non-edge enters the prime ideal of that component.
\end{proof}

All our data are posted on our supplementary materials website.
See (\ref{eq:zenodo}) for the URL.

\section{Towards Physics}
\label{sec9}

This project started from a discussion about
scattering amplitudes in particle physics.
In this section we explain the set-up to algebraically-minded
readers who are not familiar
with theoretical physics. It serves as a warm welcome to
our forthcoming physics paper \cite{HMPS}.
 
The determinant of a $2 \times 2$ matrix
 is a quadratic form on the vector space $\RR^4$.
This quadratic form has signature $(2,2)$.
To emphasize this, we use the notation $\RR^{2,2}$ for the space $\RR^4$.
The $(2,2)$ signature was made manifest in (\ref{eq:matricesST2}),
which was our point of entry to rigidity theory.
Viewing elements of $\RR^{2,2}$ as  point pairs allowed us to harness
the Geiringer-Laman Theorem.
The Grassmannian ${\rm Gr}(2,4)$ serves as the natural compactification (Lemma~\ref{lem:pointpairs}).

The setting in physics is similar, but the role
of the space $\RR^{2,2}$ is now played by Minkowski spacetime $\RR^{1,3}$.
We need to replace the signature $(2,2)$ with the Lorentzian signature $(1,3)$.
This is accomplished by a coordinate change which uses $i=\sqrt{-1}$.
The keyword for this transformation is {\em Wick rotation}.
Namely, we replace the $2 \times 4$ matrices ${\bf A}$ and ${\bf B}$ in (\ref{eq:ABCDE}) with
\begin{equation}
\label{eq:matricesPQ} 
{\bf P} \,\, = \,\, \begin{pmatrix}
1 & 0 & p_0-p_3 & p_1 + i p_2 \\
0 & 1 & p_1- i p_2 & p_0 + p_3 \end{pmatrix}
\quad {\rm and} \quad
 {\bf Q} \,\, = \,\, \begin{pmatrix}
1 & 0 & q_0-q_3 & q_1 + i q_2 \\
0 & 1 & q_1- i q_2 & q_0 + q_3 \end{pmatrix}.
\end{equation}
The vectors  $p = (p_0,p_1,p_2,p_3)$ 
and $q = (q_0,q_1,q_2,q_3)$ are elements in  $\RR^{1,3}$.
They are the {\em momentum vectors} of two elementary particles.
Their four coordinates encode the energies and momenta of the particles.
The $(1,3)$ signature is made manifest by the following identity
\begin{equation}
\label{eq:PdotQ}
\widetilde{PQ} \,\,  =\,\, {\rm det} \begin{small}
\begin{pmatrix} {\bf P} \\ {\bf Q} \end{pmatrix} \end{small} \,\, = \,\,
 (p_0 - q_0)^2 \, - \,
 (p_1 - q_1)^2 \, - \,(p_2 - q_2)^2 \, - \,(p_3 - q_3)^2 .
 \end{equation}
 This is the physics analogue to our earlier equations (\ref{eq:AdotB}) and (\ref{eq:matricesST2}).
 As before,  the vectors $p$ and $q$ in $\RR^{1,3}$ represent lines $P$ and $Q$  in $\PP^3$.
 These lines intersect if and only if (\ref{eq:PdotQ}) vanishes.
 
 An important object in theoretical physics is the {\em lightcone}
$\{ p_1^2 + p_2^2 + p_3^2 \leq p_0^2\}$ in $\RR^{1,3}$.
The ``universal speed limit'' stipulates that all particles must stay inside the lightcone.
Two particles $p$ and $q$ are {\em causally dependent} if $p-q$ is in the lightcone.
Otherwise they are {\em causally independent}. The equation $\widetilde{PQ} = 0$
delineates the boundary between dependence and independence. If it holds, then we say that the particles $p$ and $q$ are {\em light-like separated}.
We summarize the discussion above in the following statement which mirrors Lemma \ref{lem:pointpairs}. 

\begin{lemma} \label{lem:particlepairs}
The Grassmannian ${\rm Gr}(2,4)$ is a natural compactification of spacetime $\RR^{1,3}$.
Two particles are light-like separated if and only if the corresponding lines $P$ and $Q$ intersect.
\end{lemma}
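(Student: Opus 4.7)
The plan is to mirror the proof strategy used for Lemma \ref{lem:pointpairs}, replacing the Euclidean affine parametrization (\ref{eq:matricesST1}) with the Wick-rotated parametrization (\ref{eq:matricesPQ}), and to then verify the key algebraic identity (\ref{eq:PdotQ}) by direct computation. Once that identity is established, both parts of the lemma follow from things already developed in the paper.

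First I would show that the assignment $p \mapsto P$ from (\ref{eq:matricesPQ}) is a well-defined real-analytic (in fact, polynomial) injection of Minkowski spacetime $\RR^{1,3}$ into ${\rm Gr}(2,4)$. Since the leftmost $2 \times 2$ block of ${\bf P}$ is the identity, the two rows are linearly independent, so they span a point of ${\rm Gr}(2,4)$ lying in the affine chart $\{a_{12} \neq 0\}$ of Pl\"ucker coordinates. Distinct values of $p$ produce distinct row spans because the right $2\times 2$ block is determined by $p$. This affine chart is Zariski open and dense in ${\rm Gr}(2,4)$, so ${\rm Gr}(2,4)$ is a natural compactification of $\RR^{1,3}$, exactly as in Lemma \ref{lem:pointpairs}.

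Second, I would verify the identity (\ref{eq:PdotQ}) by a direct $4\times 4$ determinant expansion. Subtracting the first two rows of $\begin{small}\begin{pmatrix} {\bf P} \\ {\bf Q} \end{pmatrix}\end{small}$ from the last two eliminates the identity block, leaving
\[
\widetilde{PQ} \,=\, \det\begin{pmatrix} (p_0-q_0)-(p_3-q_3) & (p_1-q_1)+i(p_2-q_2) \\ (p_1-q_1)-i(p_2-q_2) & (p_0-q_0)+(p_3-q_3) \end{pmatrix}.
\]
Expanding this $2\times 2$ determinant yields exactly $(p_0-q_0)^2 - (p_1-q_1)^2 - (p_2-q_2)^2 - (p_3-q_3)^2$, which is the Lorentzian inner product of $p-q$ with itself. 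This is the only place where the factors of $i=\sqrt{-1}$ in (\ref{eq:matricesPQ}) matter: they convert the Euclidean $(2,2)$ signature of Lemma \ref{lem:pointpairs} into the Minkowski $(1,3)$ signature. By definition, this quantity vanishes if and only if $p$ and $q$ are light-like separated.

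Third, the bilinear form $\widetilde{PQ}$ is the dehomogenization of $PQ$ from (\ref{eq:ABquadric}), which by the remark immediately following (\ref{eq:ABquadric}) vanishes if and only if the lines $P$ and $Q$ intersect in $\PP^3$. Combining with the previous step, light-like separation of particles is equivalent to intersection of the associated projective lines. I do not anticipate a serious obstacle here; the only non-routine point is to keep track of signs and of the factor $i$ carefully when expanding the determinant, so that the $(1,3)$ signature (rather than $(2,2)$ or $(4,0)$) actually appears. Everything else is parallel to the Euclidean case already treated.
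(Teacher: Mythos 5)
Your proposal is correct and follows essentially the same route as the paper, which proves the lemma by exactly this discussion: the parametrization (\ref{eq:matricesPQ}) lands in the dense affine chart $\{a_{12}\neq 0\}$ of ${\rm Gr}(2,4)$ (mirroring Lemma \ref{lem:pointpairs}), the determinant identity (\ref{eq:PdotQ}) is verified by the same row-reduction computation, and light-like separation is by definition the vanishing of $\widetilde{PQ}$, which by (\ref{eq:ABquadric}) is the condition that the lines $P$ and $Q$ meet. Your expansion of the $2\times 2$ determinant and the resulting $(1,3)$ signature check out, so no gaps.
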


In physics, this corresponds to a change of variables to \textit{momentum twistors}~\cite{Hodges}.
The boundary of the lightcone is the threefold defined by the 
quadric $p_0^2-p_1^2 - p_2^2 - p_3^2 $.
This quadric is the rightmost $2 \times 2$ minor of the matrix  ${\bf P}$.
This minor is a Pl\"ucker coordinate on the Grassmannian ${\rm Gr}(2,4)$.
Under the compactification in Lemma \ref{lem:particlepairs},
this has the following geometric interpretation:
the boundary of the lightcone is a Schubert divisor in ${\rm Gr}(2,4)$.

\begin{corollary} \label{cor:particles}
The incidence variety $V_G$ parametrizes configurations of $\ell$ particles in $\RR^{1,3}$,
where  light-like separation is prescribed for pairs of particles that are edges in the graph $G$.
\end{corollary}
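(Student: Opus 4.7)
The plan is to derive the corollary as an $\ell$-fold pointwise application of Lemma \ref{lem:particlepairs}. First I would set up the natural identification: given an $\ell$-tuple $p = (p^{(1)}, \ldots, p^{(\ell)})$ of particles in $\RR^{1,3}$, I assign to each $p^{(i)}$ the $2 \times 4$ matrix $\mathbf{P}^{(i)}$ obtained from the template in (\ref{eq:matricesPQ}). The row span of $\mathbf{P}^{(i)}$ is a line in $\PP^3$, giving a point in ${\rm Gr}(2,4)^\ell$. This assignment is injective and its image is the Zariski-dense open subset of ${\rm Gr}(2,4)^\ell$ where each $a_{12}^{(i)} \neq 0$, matching the dehomogenization from Section \ref{sec2}.

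Next I would translate the defining equations of $V_G$ into Minkowski conditions. For every edge $ij \in G$, the generator $A_i A_j$ of $I_G$ becomes, under the Wick-rotated substitution (\ref{eq:matricesPQ}) and the computation in (\ref{eq:PdotQ}), the Lorentzian quadratic form $(p^{(i)}_0 - p^{(j)}_0)^2 - \sum_{k=1}^3 (p^{(i)}_k - p^{(j)}_k)^2$. By Lemma \ref{lem:particlepairs}, its vanishing is precisely the statement that $p^{(i)}$ and $p^{(j)}$ are light-like separated, i.e., that the corresponding lines $P^{(i)}$ and $P^{(j)}$ meet in $\PP^3$. Applying this to every edge simultaneously identifies, on the affine patch, $V_G \cap (\RR^{1,3})^\ell$ with the configurations satisfying the prescribed incidences.

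Finally I would pass to the compactification by taking Zariski closure in ${\rm Gr}(2,4)^\ell$. The step that deserves the most care, and that I would write out explicitly, is the claim that no information is lost at the boundary: one must check that the closure of the affine configuration space really equals $V_G$ rather than merely landing inside it. This follows from the argument in the proof of Proposition \ref{prop:WeFind}: no Pl\"ucker coordinate $a_{12}^{(i)}$ is a zero divisor modulo $I_G$, so the affine patch is dense in every component of $V_G$. Boundary points of $V_G$ then correspond to limits where one or more particles approach the lightcone at infinity, i.e., hit the Schubert divisor identified after Lemma \ref{lem:particlepairs}. This yields the corollary.
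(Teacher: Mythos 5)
Your proposal is correct and follows the same route the paper intends: the corollary is an immediate $\ell$-fold application of Lemma~\ref{lem:particlepairs}, translating each generator $A_iA_j$ for $ij\in G$ into the Lorentzian quadric~\eqref{eq:PdotQ} via the Wick-rotated parametrization~\eqref{eq:matricesPQ}. The paper treats this as self-evident and gives no explicit proof; your write-up just makes the implicit argument precise, including the careful observation (via the zero-divisor argument in Proposition~\ref{prop:WeFind}) that the affine chart used in~\eqref{eq:matricesPQ} is dense in every component of $V_G$, so passing to the compactification loses nothing.

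One small caveat worth flagging, though not a gap that invalidates the argument: the image of $\RR^{1,3}$ under~\eqref{eq:matricesPQ} is not an open subset of the complex affine chart $\{a_{12}^{(i)}\ne 0\}$, but a totally real $4$-dimensional subspace of the complex $4$-dimensional chart (the reality conditions $\overline{p_1+ip_2}=p_1-ip_2$, etc., cut down by half the real dimension). Such a subspace is still Zariski dense, so the density claim survives, but it would be cleaner to say ``totally real slice'' rather than ``Zariski-dense open subset.''
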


We now take a step back and take a quick look at the larger scientific landscape.
Particle physics studies fundamental interactions among elementary particles in nature. A \emph{scattering experiment} consists in making $n$ particles with momenta $P,Q, \ldots \in \mathbb{R}^{1,3}$ collide in a particle accelerator, such as the Large Hadron Collider at CERN. By repeating the experiment many times, experimental physicists measure the joint probability for different outcomes as a function of the momenta. This probability can be obtained as the square modulus of its probability amplitude, called \emph{scattering amplitude}. Theoretical physicists use the framework of Quantum Field Theory (QFT) to compute such scattering amplitudes, both to compare with scattering experiments and to advance the mathematical understanding of QFT itself. 

Scattering amplitudes are computed by summing \emph{Feynman integrals} \cite{Wein}. These are functions of the momenta $P,Q,\ldots \in \mathbb{R}^{1,3}$ and are expressed as integrals of rational functions in $4\ell$ variables.
Their arguments are $\ell$ \emph{loop momenta} $K,L,\ldots \in \mathbb{R}^{1,3}$. That rational function
in $4 \ell$ variables is singular when some linear combinations of momenta and loop momenta is light-like separated. 
This is where  Lemma \ref{lem:particlepairs} comes in.
In the geometric approach we pursue,
the $n $ momenta $P,Q,\ldots $ and the $\ell$ loop momenta $K,L,\ldots$ are
lines in $3$-space, given by points in the Grassmannian ${\rm Gr}(2,4)$.
Light-like separation means that two lines intersect.

The rational functions to be integrated have denominators that are products of quadrics like 
$\widetilde{PQ}$ in (\ref{eq:PdotQ}).
The relation between the singular locus of these integrands and the singularities of Feynman integrals and scattering amplitudes
is the subject of the \emph{Landau analysis}.
 Recent work in~\cite{FMT} connects Landau analysis to computational algebraic geometry.
However, their geometry  looks disturbingly different from ours. This is a feature, not a bug.
 By  \cite[Section 2.5]{Wein},  there many different ways to write a  Feynman integral.
 Ours closely relates to the momentum representation, while Fevola et al.~\cite{FMT} use the Lee-Pomeransky representation.
 
The article \cite{HMPS} will develop \emph{Landau analysis in the Grassmannian}.
This is about functions on varieties of lines in $3$-space.
For a physical theory known as \emph{planar $\mathcal{N}=4$ super Yang-Mills} (sYM), one can write the sum over all Feynman integrals  for fixed $\ell$ as a single integral over a rational function in $\ell$ loop momenta. When expressed in terms of lines in 
$\PP^3$, the rational function is conjectured to be the \emph{canonical form} 
of a positive geometry called \emph{amplituhedron} \cite{AT}. Boundaries of amplituhedra 
are closely related to line incidence varieties. 
 Therefore our work is also connected with the fields of total positivity and positive geometry.

Every graph $G$ and every $u \in \NN^d$
specifies a Schubert problem. This was
described in Section \ref{sec3}.  Given
$n = 4\ell-c$ lines, we seek $\ell$ lines which intersect
each other and the given lines according to  $G_u$.
The number of solutions is the LS degree. The solution is
an algebraic function of the data. Usually,
these cannot be written in radicals when
the LS degree is~$\geq 5$.

The graphs $G_u$ are closely related to Feynman diagrams~\cite{Wein}.
According to Corollary~\ref{cor:particles}, edges in $G_u$ indicate which pairs
of momenta are light-like separated. Variants of the incidence variety $V_{G_u}$
can be found in the physics literature. The recent article \cite{HMSV} by
Hannesdottir et al.~uses the term {\em on-shell space} and attributes the concept
to work of Pham in the 1960s. The projection in
\cite[equation (5.4)]{HMSV} corresponds to our map  $\psi_u$ in Section~\ref{sec3}.

In physics, one is interested in special loci in the
image space ${\rm Gr}(2,4)^d$ of the map $\psi_u$.
This has two aspects. First, there is the {\em branch locus}. This consists
of all points in ${\rm Gr}(2,4)^d$ over which the fiber of $\psi_u$
does not consist of LS degree many distinct points.
The equation of this branch locus is called the {\em LS discriminant}. Second, we can fix 
a graph $H_u$ with vertex set~$[d]$ and require 
our $d$ external lines to lie in $V_{H_u} \subset {\rm Gr}(2,4)^d$.
The natural map
$\varphi_u: V_{G \cup H_u} \rightarrow V_{H_u}$
projects configurations of $\ell+d$ lines onto configurations of $d$ lines.
This is a variant of the map $\psi_u$, but now the 
Schubert problem has become easier. We can hope to 
 write down  LS degree many 
{\em rational formulas} in the $d$ given lines for the
$\ell$ desired lines. 

\begin{example}[Triangle revisited]
Let $\ell = 3$, $G = K_3$, $d=9$ and $u = (3,3,3)$.
We discuss the Landau analysis undertaken by
Bourjaily et al.~in \cite[Section III]{BVH}. Their pictures show
$\ell = 3$ pentagons,
representing loop momenta.
That Landau diagram is the planar dual to our graph $G_u$.
The legs $1,2,\ldots,11$ of their diagram specify a graph $H$ 
for the $d=9$ external momenta, and their figures in
\cite[eqns (17)--(22)]{BVH} describe
irreducible components of the variety $V_{G \cup H_u}$.
That variety is the on-shell space referred to  in the title of \cite[Section III(a)]{BVH}.

We saw in Example \ref{ex:three} that $V_G$ has two irreducible components.
Example \ref{ex:K3again} writes the Schubert problem on each component
as the {\em intersection of three quadrics} \cite[Section IV(c)]{BVH}.
The $8$ solutions form a  {\em Cayley octad} in $\PP^3$.
The LS discriminant is a huge polynomial in the $54$ Pl\"ucker coordinates
of the $d=9$ lines which vanishes when two of the $8$ solutions come together.
Properties of this polynomial and of other LS discriminants will be studied in \cite{HMPS}.
                                                                                           
Turning to the second aspect, let $H$ be the $9$-cycle,
where the three neighbors of each vertex of $G$ are a $3$-chain.
The external momenta are the lines $x_i x_{i+1}$ spanned by 
adjacent pairs in a cycle of generic points
$x_1,x_2,\ldots,x_9 \in \PP^3$. The fiber $\varphi_u$
consists of eight triples of rational lines.
The first triple is
$(x_2 z, \,x_5 z,\, x_8 z)$, where  $z$ is the intersection of the planes
$\underline{x_2} x_3 x_4$, $\underline{x_5} x_6 x_7$ and $\underline{x_8} x_9 x_1$.
For the other $7$ triples, we can replace (or not) the
first plane by $\underline{x_3} x_1 x_2$, the
second plane by $\underline{x_6} x_4 x_5$, and the
third plane by $\underline{x_9} x_7 x_8$.
In each of these $8$ cases, the three lines are spanned by the intersection point $z$
with the underlined $x_i$'s.
These rational formulas for the $8$ leading singularities
are similar to those in \cite[Section III(c)]{BVH}.
\end{example}

Cyclicity of external momenta is a hallmark of scattering amplitudes for planar $\mathcal{N} = 4$ sYM.
Whenever the graph $G_u$ is planar, physicists  assume that the $d$ external lines are spanned by consecutive columns
in a totally positive  $4\times 2d$ matrix. The important {\em reality conjecture} in \cite{HMPS}
states that, under that positivity hypothesis, all leading singularities have real coordinates. This would ensure that
the LS discriminant is positive on the positive Grassmannian ${\rm Gr}_+(4,2d)$.
LS discriminants are also important building blocks for the \emph{symbols} \cite{Golden_2014} of scattering amplitudes and they bear connections with cluster algebras.  

\smallskip

The present paper lays the foundation for Landau analysis in the Grassmannian \cite{HMPS}.
But, our results in Sections \ref{sec2}--\ref{sec6} are more widely applicable, well beyond
particle physics.
The incidence variety $V_G$ is  for anyone
who encounters configurations of lines in $3$-space. 
Among the many pertinent threads in mathematics,
rigidity theory is just the tip of the~iceberg.

\bigskip \medskip

\noindent {\bf Acknowledgement}:
We thank Jacob Bourjaily
and Cristian Vergu for valuable feedback during the development of this project, and the organizers of the UNIVERSE+ Annual Meeting at Ringberg Castle, where this work originated. 
 Our research was supported by
the European Research Council through the synergy grant UNIVERSE+, 101118787.
$\!\!$ \begin{scriptsize}Views~and~opinions expressed
are however those of the authors only and do not necessarily reflect those of the European Union or the 
European
Research Council Executive Agency. Neither the European Union nor the granting authority
can be held responsible for them.
\end{scriptsize}

\bigskip 

\noindent
\footnotesize {\bf Authors' addresses:}
\smallskip

\noindent Ben Hollering, MPI MiS Leipzig, Germany \hfill {\tt benjamin.hollering@mis.mpg.de}

\noindent Elia Mazzucchelli, MPI Physics, Garching, Germany \hfill {\tt eliam@mpp.mpg.de}

\noindent Matteo Parisi, MPI Physics, Garching, Germany and OIST, Okinawa, Japan \hfill {\tt matteo.parisi@oist.jp}

\noindent Bernd Sturmfels, MPI MiS Leipzig, Germany \hfill {\tt bernd@mis.mpg.de}

\end{document}